\newtheorem{lemma}{Lemma}
\newtheorem{definition}{Definition}
\newtheorem{theorem}{Theorem}
\newtheorem{conjecture}{Conjecture}
\newtheorem{corollary}{Corollary}
\newcommand{\ZZ}{\mathcal{Z}}
\newcommand{\R}{\mathbb{R}}
\newcommand{\Z}{\mathbb{Z}}
\newcommand{\FF}{\mathcal{F}}
\newcommand{\BB}{\mathcal{B}}
\newcommand{\GG}{\mathbf{G}}
\newcommand{\MM}{\mathbb{M}}
\newcommand{\Mcal}{\mathcal{M}}
\newcommand{\NN}{\mathbb{N}}
\newcommand{\defeq}{:=}
\newcommand{\Ex}[1]{\text{Ex}\left(#1\right)}
\newcommand{\si}[1]{\text{si}\left(#1\right)}
\renewcommand{\SS}{\mathbb{S}}
\newcommand{\e}{\text{e}}
\newcommand{\cl}{\text{cl}}
\newcommand{\ignore}[1]{}
\tikzstyle{dot}=[circle, fill=black]
\tikzstyle{line}=[ultra thick]
\title{Counting matroids in minor-closed classes}
\author{R. A. Pendavingh}
\address{Eindhoven University of Technology, Eindhoven, the Netherlands}
\email{R.A.Pendavingh@tue.nl}
\author{J. G. van der Pol}
\address{Eindhoven University of Technology, Eindhoven, the Netherlands}
\email{jornvanderpol@gmail.com}
\begin{document}

\begin{abstract}
	A flat cover is a collection of flats identifying the non-bases of a matroid. 
	We introduce the notion of cover complexity, the minimal size of such a flat cover, as a measure for the complexity of a matroid, and present bounds on the number of matroids on $n$ elements whose cover complexity is bounded. 
	We apply cover complexity to show that the class of matroids without an $N$-minor is asymptotically small in case $N$ is one of the sparse paving matroids $U_{2,k}$, $U_{3,6}, P_6, Q_6$ or $R_6$,  thus confirming a few special cases of a conjecture due to Mayhew, Newman, Welsh, and Whittle. 
	On the other hand, we show a lower bound on the number of matroids without $M(K_4)$-minor which asymptoticaly matches the best known lower bound on the number of all matroids, due to Knuth.
\end{abstract}

\maketitle

\section{Introduction} In graph theory, there is a well-established theory of asymptotics, often phrased in terms of random graphs (see, e.g.\ \cite{BollobasRandomGraphs}). For example, the Erd\H{o}s-R\'enyi random graph $\GG_{n,\frac{1}{2}}$ on $n$ vertices in which each possible edge is present with probability $\frac{1}{2}$ (independently of any other edge) is connected with probability tending to 1. Similarly, and perhaps slightly more in the flavour of the current paper, the probability that $\GG_{n,\frac{1}{2}}$ contains a fixed subgraph tends to 1.
Such statements are essentially claims about the number of graphs on $n$ vertices that have a certain property compared to the total number of graphs on $n$ vertices, and in particular the asymptotic value of this fraction.

The present paper studies similar such statements in matroid theory. Denoting the set of matroids with ground set $[n]$ by $\mathbb{M}_n$, we consider the limit 
\begin{equation}\label{almost}\lim_{n\rightarrow \infty} \frac{\#\{M\in \mathbb{M}_n : M\text{ has property }\mathcal{P}\}}{\#\mathbb{M}_n}\end{equation}
for a matroid property $\mathcal{P}$.  If the limit exists and is equal to 1, we say that  {\em asymptotically almost every} matroid has property $\mathcal P$.

In their recent paper \cite{MayhewNewmanWelshWhittle2011}, Mayhew, Newman, Welsh, and Whittle observe that `essentially nothing is known about the properties of  ``almost all''  matroids'. They note the lack of a successful model  of random matroids. 
The key word here is `successful', as there is of course a trivial way to create a random matroid on a finite set $E$  uniformly at random:  one draws a subset of the powerset of $E$ repeatedly until one hits upon a set that  satisfies the independence axioms of a matroid on $E$. But this can hardly be called a successful model, since the scarcity of sets ${\mathcal I}\subseteq 2^E$ that satisfy the independence axioms among all subsets of $2^E$ makes it difficult to analyse the properties of such sets ${\mathcal I}$. Where it is straightforward to construct a random graph,  it is not at all obvious what a good model of random matroids should look like. 

In \cite{BansalPendavinghVanderPol2012}, Nikhil Bansal and the present authors proved an upper bound on the number $m_n$ of distinct matroids on $n$ elements, the denominator in \eqref{almost}. Combining our upper bound with the lower bound due to Knuth \cite{Knuth1974}, we have 
\begin{equation}\label{upper_lower}\frac{1}{n}\binom{n}{\lfloor n/2\rfloor}\leq \log m_n \leq \frac{2}{n}\binom{n}{\lfloor n/2\rfloor}(1+o(1)) \qquad\text{as $n\rightarrow \infty$}.\end{equation}
Note that logarithms are taken to the base 2 in  this paper. While the methods for proving these bounds do not immediately yield a satisfactory model of random matroids, we do think that they represent a step in that general direction. Essentially, we prove our upper bound by showing that each matroid on $n$ elements admits a concise description, with a length in bits bounded by the RHS in \eqref{upper_lower}. Sampling from bitstrings of that length until we find a proper compressed description of a matroid is still a somewhat hopeless way to create random matroids, but in view of the lower bound on the number of matroids on $n$ elements, the scarcity of such descriptions is not nearly as bad as before. We hope that by tightening the gap between the upper and lower bound in \eqref{upper_lower}, we will eventually arrive at a satisfactory model of random matroids as well.

Meanwhile, the methods of \cite{BansalPendavinghVanderPol2012} do inspire the following general strategy for showing that asymptotically almost every matroid has a certain property $\mathcal P$. The argument for bounding the size of a  concise description of a matroid $M$ can sometimes be extended to show that if $M$ does not have a property $\mathcal P$, then $M$ has a description of a much shorter length than the general upper bound in \eqref{upper_lower}. Then, the upper bound on the number of matroids without property $\mathcal P$  is much better accordingly. If this upper bound asymptotically becomes much smaller than Knuth's lower bound, we have proved that almost every matroid has propery $\mathcal P$.

In Section \ref{sec:cover}, we describe the nature of our concise matroid descriptions in more detail. We introduce the {\em cover complexity} of a matroid as the size of a smallest such description, and show that the cover complexity has several of the properties that one might wish for in a measure of algorithmic complexity of matroids, such as minor-monotonicity. We describe how a uniform upper bound on the cover complexity of matroids in a certain class yields an upper bound on the number of matroids in that class. We present our key technical tool, the Blow-up Lemma, that bounds the cover complexity of a matroid in terms of the cover complexities of its minors of a certain fixed rank. 

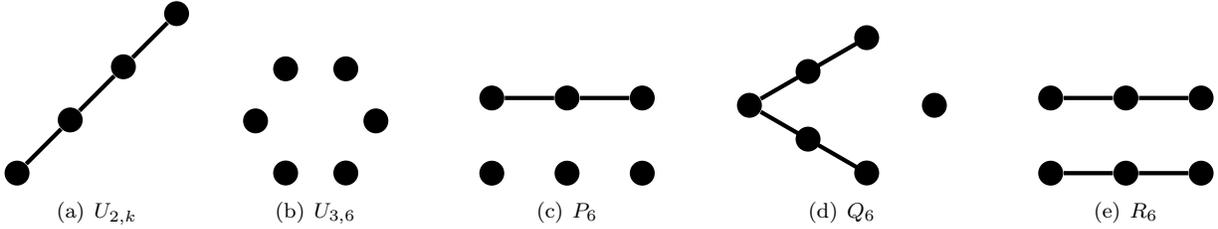
\begin{figure}[tb]
	\subfigure[$U_{2,k}$]{
		\begin{tikzpicture}[scale=0.8]
			\node[dot] (A) {};
			\node[dot, above right of=A] (B) {};
			\node[dot, above right of=B] (C) {};
			\node[dot, above right of=C] (D) {};
			
			\draw[line] (A) -- (B) -- (C) -- (D);
		\end{tikzpicture}
	} \hspace{.015\textwidth}
	\subfigure[$U_{3,6}$]{
		\begin{tikzpicture}[scale=0.8]
			\foreach\d in {0,60, 120, 180, 240, 300}{
				\draw (0,0) + (\d:1) node[dot] {};
			}
		\end{tikzpicture}
	} \hspace{.045\textwidth}
	\subfigure[$P_6$]{
		\begin{tikzpicture}[scale=0.9]
			\node[dot] (A) {};
			\node[dot, right of=A] (B) {};
			\node[dot, right of=B] (C) {};
			\node[dot, below of=A] (D) {};
			\node[dot, below of=B] (E) {};
			\node[dot, below of=C] (F) {};
			
			\draw[line] (A) -- (B) -- (C);
		\end{tikzpicture}
	}\hspace{.045\textwidth}
	\subfigure[$Q_6$]{
		\begin{tikzpicture}[scale=0.9]
			\node[dot] (A) {};
			\draw[line] (A) -- (30:1) node[dot] (B) {} -- (30:2) node[dot](C) {};
			\draw[line] (A) -- (-30:1) node[dot] (D) {} -- (-30:2) node[dot](E) {};
			\draw ($(C)!(A)!(E)$) ++ (0:1) node[dot] (F) {};
		\end{tikzpicture}
	} \hspace{.045\textwidth}
	\subfigure[$R_6$]{
		\begin{tikzpicture}[scale=0.9]
			\node[dot] (A) {};
			\node[dot, right of=A] (B) {};
			\node[dot, right of=B] (C) {};
			\node[dot, below of=A] (D) {};
			\node[dot, below of=B] (E) {};
			\node[dot, below of=C] (F) {};
			
			\draw[line] (A) -- (B) -- (C);
			\draw[line] (D) -- (E) -- (F);
		\end{tikzpicture}
	}
	\caption{\label{fig:P6R6}The matroids of Theorem \ref{thm:asymptotic_small}.}
\end{figure}

Using cover complexity, we were able to show that for several matroids $N$, asymptotically almost every matroid has an $N$-minor. This investigation was prompted by the following conjecture of Mayhew, Newman, Welsh and Whittle \cite{MayhewNewmanWelshWhittle2011}.
\begin{conjecture} \label{conj:paving} Let $N$ be a fixed sparse paving matroid. Asymptotically almost every matroid has an $N$-minor.\end{conjecture}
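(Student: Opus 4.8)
The plan is to follow the strategy described above: instead of constructing a model of random matroids, we bound the number of matroids on $[n]$ that avoid $N$ as a minor by bounding their cover complexity, and compare the result with Knuth's lower bound. Write $r=r(N)$ and $k=|E(N)|$. Since $\log m_n\ge\frac1n\binom{n}{\lfloor n/2\rfloor}$, it suffices to prove
\[
  \log\#\{M\in\mathbb{M}_n : M\text{ has no }N\text{-minor}\}
  \;=\; o\!\left(\tfrac1n\binom{n}{\lfloor n/2\rfloor}\right),
\]
or even just that the left-hand side is at most $(1-\varepsilon)\,\tfrac1n\binom{n}{\lfloor n/2\rfloor}$ for some fixed $\varepsilon>0$; either bound forces the fraction in \eqref{almost} to tend to $1$.

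By the counting bound of Section \ref{sec:cover}, a uniform bound on the cover complexity of the matroids in a class bounds the number of matroids in that class, so the task reduces to showing that an $N$-minor-free matroid on $[n]$ has cover complexity much smaller than the generic order of magnitude $\Theta\!\left(\tfrac1n\binom{n}{\lfloor n/2\rfloor}\right)$. The lever is the Blow-up Lemma: it controls the cover complexity of $M$ in terms of the cover complexities of the minors of $M$ of a suitably chosen bounded rank, and those minors are again $N$-minor-free. So everything comes down to the bounded-rank matroids without an $N$-minor, and to the fact that forbidding $N$ is a severe restriction there. The cleanest case is $N=U_{2,k}$: a simple rank-$2$ matroid with no $U_{2,k}$-minor has at most $k-1$ elements, so the rank-$2$ minors collapse to essentially nothing, and the Blow-up Lemma turns this into a bound on the cover complexity of $M$ far below $\tfrac1n\binom{n}{\lfloor n/2\rfloor}$; there are then only $o(m_n)$ such matroids.

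The step I expect to be the main obstacle is precisely this structural input for general $N$. For the rank-$3$ matroids $U_{3,6},P_6,Q_6,R_6$ one must analyse the point-line configurations in the plane that avoid the relevant one as a restriction (note that for a sparse paving $N$ and a simple matroid of rank $r(N)$, having an $N$-minor is the same as having an $N$-restriction) and show that they are forced into a rigid shape --- a bounded number of lines carrying the bulk of the incidences, the remaining points in near-general position --- from which the cover complexity is again controlled. For an arbitrary sparse paving $N$ no comparable description of the bounded-rank $N$-minor-free matroids is known, and that is exactly where Conjecture \ref{conj:paving} currently stands. A natural intermediate target is its sparse paving case: to show that asymptotically almost every sparse paving matroid on $[n]$ --- equivalently, almost every stable set in the Johnson graph $J(n,\lfloor n/2\rfloor)$ --- has an $N$-minor, this being the class for which the counting machinery is tightest and the one that already realises the lower bound $\tfrac1n\binom{n}{\lfloor n/2\rfloor}$.
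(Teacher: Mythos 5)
The statement you are asked about is a conjecture, not a theorem; the paper does not prove it, and your proposal does not either. What you have written is an accurate survey of the paper's strategy together with an honest acknowledgment of where it stops, which is essentially the paper's own position: Theorem \ref{thm:asymptotic_small} establishes the conjecture only for $N \in \{U_{2,k}, U_{3,6}, P_6, Q_6, R_6\}$, via exactly the route you describe (reduce to bounded rank $s=r(N)$ through the Blow-up Lemma, then bound $\kappa(M)$ for $N$-minor-free matroids of rank $s$ by a point--line analysis), and your remark that for a simple matroid of rank $r(N)$ an $N$-minor is an $N$-restriction is precisely what makes those rank-$3$ analyses tractable.

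The part your proposal understates is that this route is \emph{provably} insufficient for arbitrary sparse paving $N$. Theorem \ref{theorem:count} needs $\max\{\kappa(M) : M\in \Ex{N}\cap\MM_{n,s}\}\le O(n^{s-1-\epsilon})$ for some $\epsilon>0$, but Lemma \ref{lemma:v8kappa} exhibits, for every odd $n$ and every rank $s$, a $V_8$-minor-free sparse paving matroid in $\MM_{n,s}$ with $\kappa(M)\ge \binom{n}{s}/n = \Theta(n^{s-1})$, and a Dowling-matroid argument in Section \ref{sec:outtro} gives the same obstruction for $U_{3,7}$. So the missing step is not merely ``find a structural description of bounded-rank $N$-minor-free matroids'': even with a complete description in hand, the present counting machinery cannot beat Knuth's lower bound when $\epsilon=0$, and some sharper tool in the spirit of the conjectural strengthening of Theorem \ref{theorem:count} would be required before $V_8$ or $U_{3,7}$ come within reach. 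Your suggested intermediate target --- prove the conjecture within the class of sparse paving matroids, i.e.\ for stable sets in the Johnson graph --- is sensible and is the regime where the counting is tightest, but the paper does not pursue it.
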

In Section \ref{sec:small}, we confirm this conjecture for the following sparse paving matroids $N$: the uniform matroids $U_{2,k}$ and  $U_{3,6}$, and the matroids $P_6$, $Q_6$, and $R_3$ (see Figure \ref{fig:P6R6}). 
That is, we prove the following theorem.
\begin{theorem}\label{thm:asymptotic_small} If $N=U_{2,k}$ for some $k \ge 2$, or if $N$ is one of $ U_{3,6}$, $P_6$, $Q_6$ or $R_6$, then
\begin{equation*}\lim_{n \to \infty} \frac{\#\{M\in \MM_n : M\text{ does not have  }N\text{ as a minor}\}}{\#\MM_n} = 0.\end{equation*}
\end{theorem}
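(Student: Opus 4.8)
The plan is to implement the strategy described in the introduction: bound the cover complexity of every matroid without an $N$-minor, deduce from this a bound on the number of such matroids, and compare with Knuth's lower bound in \eqref{upper_lower}. Fix one of the matroids $N$ in the statement and write $r \defeq r(N)$, so that $r = 2$ if $N = U_{2,k}$ and $r = 3$ otherwise. Since cover complexity is minor-monotone, and since a matroid has an $N$-minor as soon as one of its minors does, it suffices to understand the rank-$r$ minors: each rank-$r$ minor of an $N$-minor-free matroid is again $N$-minor-free, the Blow-up Lemma turns a uniform bound on the cover complexity of these minors into a bound on the cover complexity of the original matroid, and the counting bound of Section~\ref{sec:cover} then bounds the number of $N$-minor-free matroids on $[n]$.

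The combinatorial heart of the argument is therefore to bound the cover complexity of a rank-$r$ matroid $M'$ without an $N$-minor, as a function of $|E(M')|$. For $N = U_{2,k}$ this is easy and yields a constant bound: up to parallel elements and loops $M'$ is a uniform matroid $U_{2,m}$, and if $m \ge k$ then deleting all but one element from each of $k$ distinct parallel classes yields a $U_{2,k}$-restriction; hence $m \le k-1$, the at most $k-1$ rank-$1$ flats of $M'$ form a flat cover, and the cover complexity of $M'$ is at most $k-1$. For $r = 3$ I would argue as follows. A rank-$3$ minor of a rank-$3$ matroid is a restriction, so having no $N$-minor here means having no restriction isomorphic to $N$. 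The non-bases of $M'$ are exactly its dependent triples, and every dependent triple lies in a flat of rank at most $2$; consequently the flats of rank at most $2$ containing at least three elements --- the ``long lines'' of $M'$, together with the large parallel classes and, if present, the set of loops --- form a flat cover, so the cover complexity of $M'$ is bounded by the number of such flats. The task is then to show that forbidding $N$ as a restriction forces a strong enough constraint on the line configuration. For $N = U_{3,6}$ one uses that a $U_{3,6}$-restriction is precisely a set of six non-parallel, non-loop elements no three of which are collinear, together with an extremal argument (roughly: three long lines with enough private points already contain such a configuration), to conclude that $M'$ has only boundedly many long lines unless it has few elements to begin with; either way the cover complexity is $O(1)$. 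The cases $N = P_6, Q_6, R_6$ are treated by the same scheme, replacing the characterization of a $U_{3,6}$-restriction by the corresponding description of a six-element restriction isomorphic to the matroid in question, and carrying out a somewhat finer case analysis on the long lines (and on loops and parallel pairs).

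Granting such a bound on the rank-$r$ minors, the theorem follows. Let $M \in \MM_n$ have no $N$-minor (matroids of rank below $r$ have no $N$-minor trivially, but there are only $2^{O(n\log n)}$ of them, so they may be set aside). By minor-monotonicity every rank-$r$ minor of $M$ is $N$-minor-free, so by the previous step its cover complexity is at most an explicit bound --- constant when $r = 2$ --- depending only on $N$ and on $|E(M')| \le n$; the Blow-up Lemma then yields a bound $c(n)$ on the cover complexity of $M$ itself that is small enough for the counting bound of Section~\ref{sec:cover} to limit the number of $N$-minor-free matroids on $[n]$ to $2^{\,o\left(\frac1n\binom{n}{\lfloor n/2\rfloor}\right)}$. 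On the other hand Knuth's bound \eqref{upper_lower} gives $m_n \ge 2^{\frac1n\binom{n}{\lfloor n/2\rfloor}}$, so the quotient in the statement is at most $2^{-(1-o(1))\frac1n\binom{n}{\lfloor n/2\rfloor}} \to 0$.

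I expect the main obstacle to be the combinatorial heart in the rank-$3$ cases $P_6$, $Q_6$, $R_6$: converting ``no $N$-minor'' into a workable statement about the configuration of long lines --- while keeping track of the distinction between restriction and contraction and of loops and parallel elements --- and then running a Tur\'an-type extremal argument to exclude configurations with too many long lines. Once a sufficiently strong cover-complexity bound for rank-$r$ minors is in hand, the remaining steps --- lifting it via the Blow-up Lemma and comparing the resulting count with Knuth's lower bound --- should be routine.
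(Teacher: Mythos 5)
Your overall strategy is exactly the paper's: bound the cover complexity of rank-$r(N)$ matroids in $\Ex{N}$, lift via the Blow-Up Lemma, apply the counting bound of Section~\ref{sec:cover}, and compare with Knuth's lower bound; this is what Theorem~\ref{theorem:count} packages and what the proof of Theorem~\ref{thm:small_excluded} executes with $s=r(N)$ and $\epsilon = 1$. One claim in your sketch is wrong, however. For the rank-$3$ cases you conclude that ``either way the cover complexity is $O(1)$.'' That cannot be right: even when $\si{M}$ has only a bounded number of long lines, $M$ itself may have $\Theta(n)$ nontrivial parallel classes, each of which must be covered by its own rank-$1$ flat (this is the $1 + n/2 + L$ bound of Lemma~\ref{lemma:long_lines}). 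The correct order is $O(n)$, which fortunately is still sufficient: with $s=3$ and $\epsilon=1$, Theorem~\ref{theorem:count} yields $\log|\Ex{N}\cap\MM_n| = O(2^n n^{-5/2}\log^2 n)$, comfortably below Knuth's $\frac{1}{n}\binom{n}{\lfloor n/2\rfloor}$.

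The remaining gap is the one you flag yourself: the rank-$3$ extremal analysis is only gestured at. Your sketch for $U_{3,6}$ (``three long lines with enough private points already contain such a configuration'') is not obviously correct, since six points chosen two per line may still admit a collinear triple via cross lines, and some care is needed to avoid this; the paper instead proves (Lemma~\ref{lemma:exclude_uniform_3}) that a sufficiently large simple rank-$3$ matroid with a $U_{3,k}$-minor but no $U_{3,k+1}$-minor contains a line $\ell$ such that $M\setminus\ell$ has no $U_{3,k-1}$-minor, and applies this iteratively (Lemma~\ref{lemma:u36_rank3}). The $Q_6$, $R_6$ and $P_6$ cases each require a separate geometric case analysis of how long lines in $\si{M}$ may intersect (Lemmas~\ref{lemma:Q6_rank3}--\ref{lemma:P6_rank3}). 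So: right framework, but the combinatorial lemmas at its core still need to be proved, and the $O(1)$ claim should read $O(n)$.
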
 
We note that minor-classes of matroids not containing  $U_{2,k}$ for some fixed $k$ occur naturally in the Growth Rate Theorem  of Geelen, Kung, and Whittle \cite{GeelenKungWhittle2009}.
Geelen and Nelson  have investigated the exact number of points and the structure of extremal matroids in such classes \cite{GeelenNelson2010, GeelenNelson2013}.


In Section \ref{sec:large}, we show that there are many matroids without $M(K_4)$ as a minor.
\begin{theorem}\label{theorem:k4}Let $m_n':=\#\{M\in \MM_n\mid M \text{ does not have }M(K_4)\text{ as a minor}\}$. Then 
\begin{equation*}
	\log m'_n\geq \frac{1}{n}\binom{n}{\lfloor n/2\rfloor}(1-o(1)) \qquad \text{as $n\rightarrow\infty$}.
\end{equation*}
\end{theorem}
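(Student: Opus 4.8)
The plan is to prove the lower bound by exhibiting, for each large $n$, a family of at least $2^{\frac1n\binom{n}{\lfloor n/2\rfloor}(1-o(1))}$ distinct matroids on $[n]$, all sparse paving and none having $M(K_4)$ as a minor. Recall the classical construction of sparse paving matroids: if $\mathcal{A}$ is a \emph{stable set} in the Johnson graph on $\binom{[n]}{r}$ (two $r$-sets being adjacent when their symmetric difference has size $2$), then for every $\mathcal{B}\subseteq\mathcal{A}$ the set system $\binom{[n]}{r}\setminus\mathcal{B}$ is the collection of bases of a sparse paving matroid $M_{\mathcal{B}}$ on $[n]$, whose circuit-hyperplanes are exactly the members of $\mathcal{B}$; in particular distinct $\mathcal{B}\subseteq\mathcal{A}$ yield distinct matroids, so it suffices to produce a single stable set $\mathcal{A}$ in $\binom{[n]}{\lfloor n/2\rfloor}$ with $|\mathcal{A}|\ge\frac1n\binom{n}{\lfloor n/2\rfloor}(1-o(1))$ such that no $M_{\mathcal{B}}$ with $\mathcal{B}\subseteq\mathcal{A}$ has an $M(K_4)$-minor.

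Write $r=\lfloor n/2\rfloor$. I would first show that $M_{\mathcal{B}}$ having an $M(K_4)$-minor is equivalent to a simple combinatorial pattern inside $\mathcal{B}$. Minors of sparse paving matroids are sparse paving (deletions and contractions preserve the paving property, and dually the co-paving property), so if $M_{\mathcal{B}}/C\setminus D\cong M(K_4)$ then, taking $C$ independent as we may, $M_{\mathcal{B}}/C$ is sparse paving of rank $r-|C|$; using that in a rank-$k$ sparse paving matroid every proper flat of rank $<k$ has at most $k$ elements, one forces $r-|C|=3$, hence $|C|=r-3$ and $M_{\mathcal{B}}/C\setminus D=(M_{\mathcal{B}}/C)|_{E'}$ for the six-element set $E'=[n]\setminus(C\cup D)$. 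Since every subset of $[n]$ of size $<r$ is independent in $M_{\mathcal{B}}$, the matroid $M_{\mathcal{B}}/C$ is simple, no $4$-subset of $E'$ is a line, and a $3$-subset $T\subseteq[n]\setminus C$ is a line of $M_{\mathcal{B}}/C$ exactly when $T\cup C\in\mathcal{B}$. A simple rank-$3$ matroid on six points all of whose lines have at most $3$ points is isomorphic to $M(K_4)$ if and only if it has exactly four $3$-point lines, in which case those lines automatically pairwise meet in a single point and realise the triangle--edge incidences of $K_4$. Hence $M_{\mathcal{B}}$ has an $M(K_4)$-minor only if $\mathcal{B}$ contains a \emph{bad configuration}: four members $C\cup T_1,\dots,C\cup T_4$ with a common $(r-3)$-set $C$ whose tails $T_1,\dots,T_4$ are triples that pairwise meet in exactly one element and satisfy $|T_1\cup\cdots\cup T_4|=6$. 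Consequently it suffices to build a stable set $\mathcal{A}$ with no bad configuration, since no $\mathcal{B}\subseteq\mathcal{A}$ can then contain one.

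For the construction, let $q$ be the smallest odd integer with $q\ge n$ (so $q\in\{n,n+1\}$), identify $[n]$ with an $n$-element subset $S$ of $\Z/q\Z$, and set $\mathcal{A}=\{A\in\binom{S}{r}:\sum_{a\in A}a\equiv c\pmod q\}$ for a constant $c$ chosen below. If two members of $\mathcal{A}$ have symmetric difference $\{x,y\}$ of size $2$, then $x\equiv y\pmod q$, impossible for distinct $x,y\in S\subseteq\Z/q\Z$; thus $\mathcal{A}$ is stable. Averaging over the $q$ residue classes, we may choose $c$ with $|\mathcal{A}|\ge\frac1q\binom{n}{r}\ge\frac1{n+1}\binom{n}{\lfloor n/2\rfloor}=\frac1n\binom{n}{\lfloor n/2\rfloor}(1-o(1))$. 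Finally, suppose $\mathcal{A}$ contained a bad configuration with core $C$ and tails $T_1,\dots,T_4$. Labelling the six elements of $E'=T_1\cup\cdots\cup T_4$ as the edges $e_{12},\dots,e_{34}$ of $K_4$ so that the $T_i$ are its four triangles, the four membership conditions say that the four triangle sums $e_{ij}+e_{ik}+e_{jk}$ are all congruent modulo $q$ (each equals $c-\sum_{a\in C}a$). Solving this linear system and using that $2$ is invertible modulo the odd number $q$ forces $e_{12}\equiv e_{34}$, $e_{13}\equiv e_{24}$ and $e_{14}\equiv e_{23}\pmod q$, hence $e_{12}=e_{34}$ since these are distinct elements of $S$ --- contradicting $|E'|=6$. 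Therefore $\mathcal{A}$ has no bad configuration, and combining this with the previous paragraph gives $\log m'_n\ge|\mathcal{A}|\ge\frac1n\binom{n}{\lfloor n/2\rfloor}(1-o(1))$.

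The step I expect to be the real work is the reduction in the second paragraph: one must pin down that any $M(K_4)$-minor of $M_{\mathcal{B}}$ uses a contraction set of size exactly $r-3$ and is literally a restriction of $M_{\mathcal{B}}/C$, and that the resulting family of lines is forced into the $K_4$ pattern rather than something larger --- all of which rests on the rigidity of sparse paving matroids (short sets independent, circuit-hyperplanes pairwise far apart, every proper small flat independent). The parity subtlety --- that the ``$2$ invertible'' argument needs an odd modulus, which obliges the workaround $q=n+1$ for even $n$ --- costs only the harmless factor $q/n=1+o(1)$ in the count.
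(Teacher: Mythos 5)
Your proof is correct and takes essentially the same route as the paper: both use the Graham--Sloane sum-modulo stable set to generate exponentially many sparse paving matroids, and both rule out $M(K_4)$-minors by observing that the four triangle sums would force $2(e_{12}-e_{34})\equiv 0$ modulo an odd number. The only cosmetic difference is the parity workaround --- you embed $[n]$ in $\Z/q\Z$ with $q$ the smallest odd integer $\geq n$, while the paper works modulo $n$ for odd $n$ and, for even $n$, drops to $n-1$ elements and pads with a loop.
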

This does not prove or disprove Conjecture \ref{conj:paving} for the case that $N=M(K_4)$; it does prove that if the conjecture holds in this case, it does so in a less overwhelming manner than for the matroids in Theorem \ref{thm:asymptotic_small}.
Asymptotically, our lower bound on the number of matroids without $M(K_4)$-minor matches Knuth's lower bound on the number of all matroids (Theorem \ref{theorem:knuth}). 
This is not a coincidence, since we show our bound by arguing that many of the sparse paving matroids whose number constitutes Knuth's lower bound, do not have $M(K_4)$ as a minor.

Finally, we consider further directions and pose a challenge in Section \ref{sec:outtro}.

\section{Notation and preliminaries}
\subsection{General notation} We write $[n]:=\{1,\ldots, n\}$, and put
\begin{equation*}
\binom{E}{r}:=\{X\subseteq E : |X|=r\}.
\end{equation*}

Throughout the paper, we take logarithms to the base 2.

\subsection{Matroids}
In general, we use matroid terminology and notation as in Oxley \cite{OxleyBook}. If $n$ is a natural number, then  we put
\begin{equation*}
\MM_n:=\{M\text{ a matroid} : E(M)=[n] \}\quad\text{and}\quad\MM_{n,r}:=\{M\text{ a matroid} : E(M)=[n], r(M)=r \}.
\end{equation*}
Also, we write $m_n:=|\MM_n|$ and $m_{n,r}:=|\MM_{n,r}|$ for the cardinalities for these classes. If $N$ is a matroid, then the set of matroids without an $N$-minor is written $\Ex{N}$. 

By a  {\em non-basis} of $M$, we mean a set $X\subseteq E(M)$ of cardinality $r(M)$ that is not a basis.
A matroid $M$ is {\em paving} if each circuit $C$ of $M$ has $|C|\geq r(M)$, and $M$ is {\em sparse paving} if both $M$ and its dual $M^*$ are paving. 
Equivalently, a matroid $M$ is sparse paving if and only if each non-basis of $M$ is a circuit-hyperplane.
We will use the following lower bound on the number of matroids due to Knuth \cite{Knuth1974}, which is based on counting the number of sparse paving matroids in $\MM_{n}$. 
\begin{theorem} \label{theorem:knuth}$\log m_n\geq \frac{1}{n}\binom{n}{\lfloor n/2\rfloor}$.\end{theorem}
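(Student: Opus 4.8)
The plan is to exhibit many sparse paving matroids of rank $r=\lfloor n/2\rfloor$ by a pigeonhole argument, following Knuth. The starting point is the combinatorial description of sparse paving matroids: a family $\mathcal{A}\subseteq\binom{[n]}{r}$ is the set of non-bases of a (necessarily sparse paving) matroid of rank $r$ on $[n]$ if and only if $|A\triangle A'|\ge 4$ for all distinct $A,A'\in\mathcal{A}$ — equivalently, $\mathcal{A}$ is a stable set in the Johnson graph whose vertices are the $r$-subsets of $[n]$, with edges joining pairs of symmetric difference $2$. I would first record this fact with a short proof. Necessity is immediate from the circuit-hyperplane characterisation quoted above. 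For sufficiency one verifies basis exchange for the family $\binom{[n]}{r}\setminus\mathcal{A}$: if exchange failed at some $B\in\binom{[n]}{r}\setminus\mathcal{A}$ and $e\in B$, then there would be at least two distinct $f_1,f_2$ with $B-e+f_i\in\mathcal{A}$, and these two non-bases have symmetric difference $2$, contradicting stability. In particular every subfamily of a stable set is again the set of non-bases of a sparse paving matroid, and distinct subfamilies of non-bases determine distinct matroids in $\MM_{n,r}$.

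Next I would produce a large stable set via a modular-sum trick. Define $\phi\colon\binom{[n]}{r}\to\Z/n\Z$ by $\phi(A)=\sum_{a\in A}a \bmod n$, and for $c\in\Z/n\Z$ set $\mathcal{A}_c=\phi^{-1}(c)$. If $A,A'$ are distinct $r$-sets with $|A\triangle A'|=2$, say $A'=A-a+b$ with $a\neq b$ in $[n]$, then $\phi(A')-\phi(A)\equiv b-a\not\equiv 0\pmod n$ since $0<|a-b|<n$; hence $\phi(A)\neq\phi(A')$. Thus each $\mathcal{A}_c$ is a stable set in the Johnson graph. Since the classes $\mathcal{A}_c$ partition $\binom{[n]}{r}$, pigeonhole yields some $c$ with $|\mathcal{A}_c|\ge\frac1n\binom{n}{r}$.

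Finally I would assemble the count: each of the $2^{|\mathcal{A}_c|}$ subfamilies of $\mathcal{A}_c$ is the set of non-bases of a distinct matroid in $\MM_{n,r}$, so $m_n\ge m_{n,r}\ge 2^{|\mathcal{A}_c|}\ge 2^{\binom{n}{r}/n}$, and taking logarithms to base $2$ gives $\log m_n\ge\frac1n\binom{n}{\lfloor n/2\rfloor}$ because $r=\lfloor n/2\rfloor$. One can extract slightly more by summing $2^{|\mathcal{A}_c|}$ over all $c$ and invoking convexity of $2^x$, but the pigeonhole version already delivers the stated bound. I do not anticipate a real obstacle: the only mildly delicate point is the sufficiency direction of the Johnson-graph characterisation, which is the two-line exchange verification above; the rest is the observation that fixing $\sum_{a\in A}a$ modulo $n$ automatically keeps the family stable.
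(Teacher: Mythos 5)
Your proposal is correct and follows essentially the same route as the paper: both use the Piff--Welsh correspondence between sparse paving matroids and stable sets in the Johnson graph $J(n,r)$, the Graham--Sloane modular-sum coloring $A\mapsto\sum_{a\in A}a \bmod n$ to produce an $n$-coloring, pigeonhole to extract a color class of size at least $\frac1n\binom{n}{r}$, and the observation that every subfamily of a stable set yields a distinct sparse paving matroid. The only difference is that you spell out the sufficiency direction of the Johnson-graph characterisation via basis exchange, whereas the paper simply cites Piff and Welsh for this lemma.
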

A proof for this theorem is presented in section \ref{sec:large}.
\ignore{
In this section we briefly review some of the matroid terminology that we will use. A standard reference for matroid theory is \cite{OxleyBook}, to which we refer for a more in-depth discussion. A matroid is a pair $(E,\BB)$ consisting of a {\em ground set} $E$ and a non-empty collection $\BB \subseteq 2^E$ of {\em bases}, satisfying
\begin{enumerate}
	\item No $B \in \BB$ is a proper subset of any other $B' \in \BB$;
	\item For any $B_1, B_2 \in \BB$ and $x \in B_1$, there exists $y \in B_2$ such that $\left(B_1 \setminus \{x\}\right) \cup \{y\} \in \BB$.
\end{enumerate}

If we want to distinguish between ground sets and bases of different matroids, we use $E(M)$ and $\BB(M)$.

A subset $X \subseteq E$ is called {\em independent} if it is contained in some basis, and {\em dependent} otherwise. Minimal dependent subsets are called {\em circuits}.

It can be shown that all bases have the same cardinality, which is called the {\em rank} of the matroid. Moreover, for $X \subseteq E$ we define the rank of $X$ to be the cardinality of a maximum independent subset of $X$. We usually write the rank of $X$ as $r_M(X)$, or $r(X)$ if the dependence on $M$ is clear.

If $M$ is a matroid of rank $r$, then any $r$-set that is not a basis is called a {\em non-basis}. Hence, the set of non-bases is given by $\binom{E(M)}{r} \setminus \BB(M)$.

The {\em closure} of $X \subseteq E$ is defined as $\cl(X) \equiv \cl_M(X) = \{e \in E : r_M(X\cup\{e\}) = r_M(X)\}$; by induction $r(cl(X)) = r(X)$. If $X = \cl(X)$ then $X$ is called a {\em flat} of the matroid. Flats of rank 1 (resp.\ 2) are called {\em points} (resp.\ {\em lines}). Moreover, in a matroid of rank $r$, flats of rank $r-1$ are called {\em hyperplanes}. Hyperplanes that are also circuits are called {\em circuit-hyperplanes}. We write $\FF(M)$ for the collection of flats of a matroid $M$.

{\em Cycle matroids} are a specific class of matroids obtained from graphs. Given a connected graph $G = (V,E)$, its cycle matroid is the matroid on ground set $E$, of which any subset is a basis if and only if it is the edge set of a spanning tree.

Given a matroid $M = (E,\BB)$, we denote its {\em dual} by $M^*$ and we will write $M/A\setminus B$ for the {\em minor} of $M$ obtained by contracting $A$ and deleting $B$.
}
\subsection{Bounds on binomial coefficients}

We will frequently use the following standard bounds on binomial coefficients:
\begin{equation}\label{eq:binom_bound1}
	\binom{n}{r} \le \left(\frac{\e n}{r}\right)^r,
\end{equation}
and
\begin{equation}\label{eq:binom_bound2}
	\frac{2^n}{\sqrt{n}} \sqrt{\frac{2}{\pi}}(1-o(1))	 \le \binom{n}{\lfloor n/2 \rfloor} \le \frac{2^n}{\sqrt{n}} \sqrt{\frac{2}{\pi}}.
\end{equation}

\section{\label{sec:cover}Cover complexity of a matroid}
\subsection{Flat covers and cover complexity}

If $X$ is a dependent set in a matroid $M$, and $F$ is a flat of $M$, then we say that $F$ {\em covers} $X$ if $|X \cap F| > r_M(F)$. Thus, $F$ acts as a witness for the dependence of $X$. We say that a collection of flats {\em covers} the matroid $M$ if each non-basis in $M$ is covered by some flat in the collection. We propose the number of flats needed to cover a matroid as a measure for the complexity of this matroid. This inspires the following definitions.
\begin{definition}[Flat cover]
	If $M$ is a matroid, then $\ZZ \subseteq \FF(M)$ is called a {\em flat cover} of $M$ if each non-basis is covered by some $F \in \ZZ$.
\end{definition}
\begin{definition}[Cover complexity]
	If $M$ is a matroid, then its {\em cover complexity} $\kappa(M)$ is defined as the minimum size of a flat cover of $M$,
	\begin{equation*}
		\kappa(M) \defeq \min\{|\ZZ| : \ZZ \subseteq \FF(M) \text{ is a flat cover of $M$}\}.
	\end{equation*}
\end{definition}
The following three lemmas show that the formal notion of cover complexity captures some of the intuition on the 'complexity' of individual matroids. The dual of a matroid $M$ is 'as complex' as $M$, and has the same cover complexity as $M$. Minors of $M$ are 'simpler' than $M$, and so have lower cover complexity. Knowing $M\setminus e$ and $M/e$ suffices to reconstruct $M$, and so the sum of their complexities bounds the complexity of $M$.
\begin{lemma}\label{lemma:cc1}
	If $M$ is a matroid, then $\kappa(M) = \kappa(M^*)$.
\end{lemma}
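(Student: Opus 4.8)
**Plan for proving $\kappa(M) = \kappa(M^*)$.**

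The plan is to show that a flat cover of $M$ can be transformed into a flat cover of $M^*$ of the same size, and then invoke duality to get the reverse inequality for free. The key observation is the standard duality fact that $F$ is a flat (closed set) of $M$ if and only if its complement $E \setminus F$ is a \emph{union of circuits} of $M^*$, equivalently $E\setminus F$ is a coindependent... more usefully: $F$ is a hyperplane of $M$ iff $E\setminus F$ is a circuit of $M^*$, and in general complements of flats of $M$ correspond to the closed sets of $M^*$ only after taking a further operation. So rather than matching flats to flats directly, I would match each flat $F$ in a cover $\ZZ$ of $M$ to the flat $\cl_{M^*}(E(M)\setminus F)$ of $M^*$, or more cleanly work hyperplane-by-hyperplane: enlarge the given cover so that it consists of \emph{hyperplanes} of $M$ (replacing each flat $F$ by any hyperplane containing it does \emph{not} preserve covering, so this needs care — see below).

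The cleaner route: First recall the elementary equivalence that a set $X \in \binom{E}{r(M)}$ is a non-basis of $M$ if and only if its complement $E\setminus X \in \binom{E}{r(M^*)}$ is a non-basis of $M^*$ (both fail to be bases simultaneously, since bases of $M^*$ are exactly complements of bases of $M$). Next, I claim: for a flat $F$ of $M$ and a non-basis $X$ of $M$, the flat $F' := \cl_{M^*}(E\setminus F)$ of $M^*$ covers the non-basis $X' := E\setminus X$ of $M^*$ whenever $F$ covers $X$. To verify this I would use $r_M(F) + r_{M^*}(E\setminus F) = |E\setminus F| + (r_M(F) - r(M)) $... i.e. the corank/nullity identity $r_{M^*}(A) = |A| - r(M) + r_M(E\setminus A)$. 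Applying this with $A = E\setminus F$ gives $r_{M^*}(E\setminus F) = |E| - |F| - r(M) + r_M(F)$. Then $|X' \cap F'| \ge |X'\cap (E\setminus F)| = |X \setminus... |$; combining $|X\cap F| > r_M(F)$ with these counting identities should yield $|X'\cap F'| > r_{M^*}(F')$ after a short calculation. This inequality-chasing step is the one I expect to be the main obstacle — getting the bookkeeping with ranks, coranks, and the two complementations exactly right, and in particular checking that passing from $E\setminus F$ to its $M^*$-closure $F'$ only helps (it can only increase $|X'\cap F'|$ while keeping $r_{M^*}(F') = r_{M^*}(E\setminus F)$).

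Once this correspondence is established, the map $\ZZ \mapsto \{\cl_{M^*}(E\setminus F) : F\in\ZZ\}$ sends any flat cover of $M$ to a flat cover of $M^*$ of size at most $|\ZZ|$, so $\kappa(M^*)\le \kappa(M)$. Applying the same argument to $M^*$ in place of $M$ and using $(M^*)^* = M$ gives $\kappa(M)\le\kappa(M^*)$, hence equality. I would present it as a single lemma-internal claim (the covering-preservation step), then two lines for the symmetric conclusion. An alternative, possibly slicker, formulation avoids closures entirely by working with the \emph{cyclic flats} / hyperplane version and citing that complements of hyperplanes of $M$ are circuits of $M^*$; but since a general flat cover need not consist of hyperplanes, the closure-of-complement map above seems the most robust, and its only delicate point is the rank inequality verification noted above.
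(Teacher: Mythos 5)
Your proposal is correct and follows essentially the same route as the paper: map each flat $F$ in a cover of $M$ to $\cl_{M^*}(E\setminus F)$, verify via the corank identity $r_{M^*}(A)=|A|-r(M)+r_M(E\setminus A)$ that this preserves covering of complementary non-bases, conclude $\kappa(M^*)\le\kappa(M)$, and finish by duality. The rank bookkeeping you flagged as the delicate step is exactly the short inequality chain the paper carries out.
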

\begin{proof}
As $M^{**}=M$, it suffices to show that $\kappa(M^*) \le \kappa(M)$. Suppose that $M \in \MM_{n,r}$. Let $\ZZ$ be a minimum-size flat cover of $M$ and let  $\ZZ^*:= \{\cl^*(E\setminus F) : F \in \ZZ\}$.
We show that $\ZZ^*$ is a flat cover of $M^*$. Consider a non-basis $X$ of $M^*$. Then $E\setminus X$ is a non-basis of $M$, so that there exists an $F \in \ZZ$ such that $|(E\setminus X) \cap F| > r(F)$. It follows that $\cl^*(E\setminus F)\in \ZZ^*$ and 
\begin{equation*}
	|X \cap \cl^*(E\setminus F)| \ge |X \cap (E\setminus F)| = |E \setminus F| - |E \setminus X| + |F \setminus X| > |E \setminus F| - r + r(F) = r^*(F). \qedhere
\end{equation*}
\end{proof}
\begin{lemma}\label{lemma:cc3}
	If $N \preccurlyeq M$, then $\kappa(N) \le \kappa(M)$.
\end{lemma}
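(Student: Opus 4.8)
The plan is to show that a flat cover of $M$ can be transformed into a flat cover of $N$ of no larger size whenever $N = M/A \setminus B$ is a minor of $M$. Since minors can be built up one element at a time, and since deletion and contraction are interchangeable in the order they are applied, it suffices to treat the two cases $N = M\setminus e$ and $N = M/e$ separately; iterating gives the general claim. Moreover, by Lemma \ref{lemma:cc1} we have $\kappa(M\setminus e) = \kappa((M\setminus e)^*) = \kappa(M^*/e)$, so once the contraction case is handled the deletion case follows by duality. Hence the real work is to prove $\kappa(M/e) \le \kappa(M)$.

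For the contraction step, let $\ZZ$ be a minimum-size flat cover of $M$, and define
\begin{equation*}
	\ZZ' := \{\, \cl_{M/e}(F \setminus e) : F \in \ZZ \,\}.
\end{equation*}
Each set $F\setminus e$ has $r_{M/e}(F\setminus e) \le r_M(F)$ (in fact it drops by at most one, and only when $e \in \cl_M(F\setminus e)$), and its closure in $M/e$ is a flat of $M/e$, so $\ZZ'$ is a collection of flats of $M/e$ of size at most $|\ZZ| = \kappa(M)$. It remains to check that $\ZZ'$ covers every non-basis of $M/e$. If $X$ is a non-basis of $M/e$, then $r_{M/e}(X) < |X| = r(M) - 1$, so $X \cup e$ is a non-basis of $M$ (it has size $r(M)$ and rank $r_M(X\cup e) = r_{M/e}(X) + 1 < r(M)$). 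Pick $F \in \ZZ$ covering $X \cup e$, i.e.\ $|(X\cup e) \cap F| > r_M(F)$. The plan is now to verify that $\cl_{M/e}(F\setminus e)$ covers $X$, by bounding $|X \cap \cl_{M/e}(F\setminus e)| \ge |X \cap (F\setminus e)|$ from below and $r_{M/e}(F\setminus e)$ from above, distinguishing whether or not $e \in F$; in either case the gain of $1$ from passing to the contraction matches the loss of $e$ from the intersection.

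The main obstacle I anticipate is the careful bookkeeping in this last step: one must track how $|(X\cup e)\cap F|$ relates to $|X \cap (F\setminus e)|$ (a difference of exactly $1$ if $e\in F$, and $0$ otherwise) and simultaneously how $r_M(F)$ relates to $r_{M/e}(F\setminus e)$, so that the strict inequality $|(X\cup e)\cap F| > r_M(F)$ is preserved as $|X \cap \cl_{M/e}(F\setminus e)| > r_{M/e}(F\setminus e)$. Everything else—reducing to single-element minors, and reducing deletion to contraction via Lemma \ref{lemma:cc1}—is routine. A cleaner alternative, which I would try first, is to establish the single-element contraction bound directly and observe that for deletion one can argue symmetrically using cocircuits, but routing through duality is the shortest path given that Lemma \ref{lemma:cc1} is already available.
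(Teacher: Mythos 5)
Your approach is the dual mirror of the paper's: you prove the single-element contraction bound $\kappa(M/e)\le\kappa(M)$ directly and derive the deletion bound via Lemma \ref{lemma:cc1} and the identity $(M\setminus e)^*=M^*/e$, whereas the paper proves the deletion bound directly and derives contraction by duality. Both routes are sound. Going through deletion (as the paper does) is marginally cleaner, because for a flat $F$ of $M$ the set $F\setminus e$ is automatically a flat of $M\setminus e$; on the contraction side, as you correctly note, one must pass to $\cl_{M/e}(F\setminus e)$ since $F$ with $e\notin F$ need not be closed in $M/e$.

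There is, however, a small gap you should close. Your verification that $\ZZ'$ covers each non-basis $X$ of $M/e$ proceeds by claiming $|X|=r(M)-1$ and that $X\cup e$ is a non-basis of $M$. This silently assumes $e$ is not a loop of $M$. If $e$ is a loop, then $r(M/e)=r(M)$, so $|X|=r(M)$ and $X\cup e$ has the wrong cardinality to be a non-basis; the reduction to $X\cup e$ simply breaks. The fix is easy and does not change your cover $\ZZ'$: when $e$ is a loop, $M/e=M\setminus e$, every flat $F$ of $M$ contains $e$, and $F\setminus e$ is a flat of $M/e$ with $r_{M/e}(F\setminus e)=r_M(F)$; a non-basis $X$ of $M/e$ is then literally a non-basis of $M$ avoiding $e$, and any $F\in\ZZ$ covering it in $M$ yields $|X\cap(F\setminus e)|=|X\cap F|>r_M(F)=r_{M/e}(F\setminus e)$. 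Note that you cannot instead defer the loop case to the deletion bound, since in your scheme the deletion bound is itself derived from the contraction bound; a direct argument (or a remark that one may always take the contraction set independent) is needed. Incidentally, the analogous boundary case in the paper's deletion-first proof is $e$ a coloop, which the paper treats explicitly as its second case.
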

\begin{proof}
	We will show that the theorem holds when $N = M\setminus\{e\}$ for some element $e$. It then follows from Lemma \ref{lemma:cc1} that the same is true for single element contractions, and the full claim follows from a standard induction argument. Let $\ZZ$ be a flat cover for $M$. We claim that $\ZZ' \defeq \{F\setminus\{e\} : F \in \ZZ\}$ is a flat cover for $N$. We consider two cases.
	
	If $N$ has rank $r$, then the non-bases of $N$ are dependent $r$-sets avoiding $e$. As this is a subset of the set of non-bases of $M$, it follows that $\ZZ'$ is a flat cover for $N$.
	
	If, on the other hand, $N$ has rank $r-1$, then non-bases in $N$ are those $(r-1)$-subsets $X$ such that $X\cup\{e\}$ is a non-basis in $M$. If $F$ covers $X\cup\{e\}$, then
			\begin{equation*}
				|X \cap (F\setminus \{e\})| =
				\begin{cases}
					|X \cup \{e\} \cap F| - 1 \ge r_M(F) > r_M(F\setminus\{e\} = r_N(F\setminus\{e\}) & \text{if $e \in F$}, \\
					|X \cup \{e\} \cap F| > r_M(F) = r_M(F\setminus\{e\} = r_N(F\setminus\{e\}) & \text{if $e \not\in F$}.  \qedhere
				\end{cases}
			\end{equation*}
\end{proof}

\begin{lemma}\label{lemma:cc2}
	Let $M$ be a matroid and let $e\in E(M)$. If $e$ is neither a loop nor a coloop of $M$, then $\kappa(M) \le \kappa(M/e) + \kappa(M\setminus e)$.
\end{lemma}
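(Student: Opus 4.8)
The plan is to construct a flat cover of $M$ by lifting a minimum flat cover $\ZZ_1$ of $M/e$ and a minimum flat cover $\ZZ_2$ of $M\setminus e$ to collections of flats of $M$, and checking that the union covers every non-basis of $M$. Write $r:=r(M)$ and $E:=E(M)$. Since $e$ is neither a loop nor a coloop, $M\setminus e$ has rank $r$ and $M/e$ has rank $r-1$, both on ground set $E\setminus\{e\}$; this is precisely where the two hypotheses enter, and it is what makes the correspondence between non-bases of $M$ and non-bases of its two minors cardinality-preserving.

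First I would record the standard closure and rank identities: for $A\subseteq E\setminus\{e\}$ one has $\cl_{M\setminus e}(A)=\cl_M(A)\setminus\{e\}$, $\cl_{M/e}(A)=\cl_M(A\cup\{e\})\setminus\{e\}$, $r_M(A)=r_{M\setminus e}(A)$, and $r_M(A\cup\{e\})=r_{M/e}(A)+1$ (the last identity uses that $e$ is not a loop). From these it follows that if $F$ is a flat of $M\setminus e$ then $\cl_M(F)$ is a flat of $M$ with $F\subseteq\cl_M(F)\subseteq F\cup\{e\}$ and $r_M(\cl_M(F))=r_M(F)$, and that if $F$ is a flat of $M/e$ then $F\cup\{e\}$ is a flat of $M$ with $r_M(F\cup\{e\})=r_{M/e}(F)+1$. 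So set $\ZZ\defeq\{\cl_M(F):F\in\ZZ_2\}\cup\{F\cup\{e\}:F\in\ZZ_1\}\subseteq\FF(M)$.

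Next I would split the non-bases of $M$ into those avoiding $e$ and those containing $e$. If $X\in\binom{E}{r}$ with $e\notin X$, then $X$ is a non-basis of $M$ iff it is a non-basis of $M\setminus e$; picking $F\in\ZZ_2$ with $|X\cap F|>r_{M\setminus e}(F)$, the flat $\cl_M(F)\in\ZZ$ covers $X$ in $M$ because $|X\cap\cl_M(F)|\ge|X\cap F|>r_M(F)=r_M(\cl_M(F))$. If $X=Y\cup\{e\}$ with $|Y|=r-1$, then $X$ is a non-basis of $M$ iff $Y$ is a non-basis of $M/e$; picking $F\in\ZZ_1$ with $|Y\cap F|>r_{M/e}(F)$, the flat $F\cup\{e\}\in\ZZ$ covers $X$ because $|X\cap(F\cup\{e\})|=|Y\cap F|+1>r_{M/e}(F)+1=r_M(F\cup\{e\})$. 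Thus $\ZZ$ is a flat cover of $M$ with $|\ZZ|\le|\ZZ_1|+|\ZZ_2|=\kappa(M/e)+\kappa(M\setminus e)$.

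I do not expect a genuine obstacle: the content is entirely in the closure/rank identities for deletion and contraction, which guarantee that the lifted sets are flats of $M$ of the right rank, and in invoking the loop/coloop hypotheses at the one place they matter. The two lifted covers need not be disjoint, but that can only decrease $|\ZZ|$, so it is harmless.
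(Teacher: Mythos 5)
Your proof is correct, and it is a genuine (if minor) simplification of the paper's argument. The paper handles the non-bases $X$ containing $e$ by passing to the dual: it takes a minimum flat cover of $(M/e)^* = M^*\setminus e$, covers $E\setminus X$ there, and translates back via the duality $\kappa(M)=\kappa(M^*)$ from Lemma~\ref{lemma:cc1} and the construction $F \mapsto \cl_M(E\setminus \cl_M^*(F))$. You instead lift a flat cover of $M/e$ directly to $M$ via $F \mapsto F\cup\{e\}$, using that $F\cup\{e\}$ is a flat of $M$ with $r_M(F\cup\{e\}) = r_{M/e}(F)+1$ whenever $e$ is not a loop, so that the covering inequality $|Y\cap F| > r_{M/e}(F)$ becomes $|X\cap(F\cup\{e\})| > r_M(F\cup\{e\})$ after adding $1$ to both sides. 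This avoids any appeal to Lemma~\ref{lemma:cc1} and is arguably cleaner and more symmetric with the treatment of the deletion side. Both arguments invoke the non-loop hypothesis at the same point (the rank formula for $M/e$, and the bijection $Y \leftrightarrow Y\cup\{e\}$ between non-bases), and the non-coloop hypothesis to ensure $r(M\setminus e)=r(M)$ so that $r$-sets avoiding $e$ are non-bases of $M$ iff they are non-bases of $M\setminus e$. Your identification of exactly where the two hypotheses enter is accurate.
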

\begin{proof}
	Let $\ZZ_{M\setminus e}$ be minimal flat cover for $M\setminus e$ and let $\ZZ_{(M/e)^*}$ be a minimal flat cover for $(M/e)^*$. Assume that $e$ is neither a loop nor a coloop of $M$.
	We claim that
	\begin{equation*}
		\ZZ \defeq \{\cl_M(F): F\in  \ZZ_{M\setminus e}\} \cup \{ \cl_M(E\setminus(\cl_M^*(F))) : F \in \ZZ_{(M/e)^*}\}
	\end{equation*}
	is a flat cover for $M$. To see this, consider a nonbasis $X$ of $M$. If $e\not\in X$, then $X$ is a nonbasis of $M\setminus e$ and hence there is a flat $F\in \ZZ_{M\setminus e}$ that covers $X$ in $M\setminus e$.
	Then $\cl_M(F)\in \ZZ$ is a flat of $M$ covering $X$. If $e\in X$, then $E\setminus X$ is a nonbasis of $(M/e)^*$, and hence there is a flat $F\in  \ZZ_{(M/e)^*}$ covering $E\setminus X$ in $(M/e)^*$. Then 
	$\cl^*_M(F)$ covers $E\setminus X$ in $M^*$, and $\cl_M(E\setminus(\cl_M^*(F))\in \ZZ$ is a flat of $M$ covering $X$. Hence $\kappa(M)\leq |\ZZ|\leq |\ZZ_{M\setminus e}|+|\ZZ_{(M/e)^*}|=\kappa(M/e) + \kappa(M\setminus e)$.	 
\end{proof}

\subsection{Bounding the size of a class by cover complexity}
If $M$ is a matroid and $\ZZ$ is a flat cover of $M$, then $M$ is fully determined by its ground set $E(M)$, the rank $r(M)$ and the collection $\{(F, r_M(F)) : F \in \ZZ\}$. As $\kappa(M)$ is the least number of flats in such a flat cover $\ZZ$, it is intuitively clear that we can put a bound on the number of matroids in some class $\Mcal$, if we are able to uniformly bound $\kappa(M)$ for $M \in \Mcal$ from above. 
\begin{lemma}\label{lemma:upperbound_by_kappa}
	Suppose that there exists a function $f:\NN \to \R_+$ such that $\kappa(M) \le f(n)2^n$ for all $M \in \Mcal \cap \MM_n$ and $n$ sufficiently large. If $f(n) \le 1$ eventually and $1/f(n) = O(n^\alpha)$ for some positive constant $\alpha$, then
	$ \log |\Mcal \cap \MM_n| = O\left(f(n)2^n \log(n)\right)$.
\end{lemma}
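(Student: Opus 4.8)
The plan is to count matroids by encoding each $M \in \Mcal \cap \MM_n$ via a minimum flat cover $\ZZ$ together with the extra data that pins $M$ down, namely the rank $r = r(M)$ and, for each flat $F \in \ZZ$, the pair $(F, r_M(F))$. As observed just before the lemma, this data determines $M$ uniquely: a set is a non-basis precisely when some listed flat covers it. So it suffices to bound the number of such encodings. Writing $k := \lceil f(n)2^n \rceil$ for the uniform bound on $\kappa(M)$, each matroid is described by: a choice of $r \in \{0,1,\dots,n\}$ (at most $n+1$ options, costing $O(\log n)$ bits); and a choice of at most $k$ flats, each flat being a subset of $[n]$ (so at most $2^n$ choices) tagged with its rank in $\{0,\dots,r\}$ (at most $n+1$ choices). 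Hence the number of encodings is at most $(n+1) \cdot \bigl((n+1)2^n\bigr)^k$, and therefore
\begin{equation*}
	\log |\Mcal \cap \MM_n| \le \log(n+1) + k\bigl(n + \log(n+1)\bigr) = O\bigl(k\,(n + \log n)\bigr) = O\bigl(f(n)2^n \log n\bigr),
\end{equation*}
where in the last step I use $k = \lceil f(n)2^n\rceil \le f(n)2^n + 1$ and $n = o(f(n)2^n \log n)$.

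The one genuine subtlety — and the reason the hypotheses $f(n)\le 1$ eventually and $1/f(n) = O(n^\alpha)$ are present — is justifying that ceiling-and-lower-order-term step, i.e.\ that $k(n+\log n)$ really is $O(f(n)2^n\log n)$ rather than being dominated by a stray additive $2^n$ or $n$. Here $k \le f(n)2^n + 1 \le 2f(n)2^n$ for $n$ large, since $f(n)2^n \ge 1$ eventually (as $f(n) \le 1$ would otherwise have to be compensated, but $1/f(n) = O(n^\alpha)$ forces $f(n) \ge c\,n^{-\alpha}$, so $f(n)2^n \ge c\,n^{-\alpha}2^n \to \infty$). The same inequality $f(n) \ge c\,n^{-\alpha}$ gives $n = O(n^{\alpha+1} f(n)) = O(f(n)2^n)$ trivially, so the $k\cdot n$ contribution is absorbed; thus $k(n + \log n) = O(f(n)2^n)\cdot O(\log n) = O(f(n)2^n \log n)$, as required.

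I expect the main obstacle to be purely bookkeeping: making sure the crude ``choose $k$ flats'' count is legitimate (one should allow choosing $\emph{at most}$ $k$ flats, or pad with repeated flats, so that $\kappa(M) \le k$ suffices rather than requiring equality), and checking that no logarithmic factor is lost when one passes from ``number of encodings'' to ``$\log$ of that number''. There is no hard combinatorial content beyond the description principle already stated in the text; the lemma is essentially an information-theoretic counting bound, and the hypotheses on $f$ are exactly what is needed to keep the lower-order terms from swamping the main term $f(n)2^n\log n$.
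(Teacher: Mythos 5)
Your encoding-and-count strategy is the right framework and is exactly the one the paper uses, but the counting step has a genuine error that loses the crucial factor of $n/\log n$. You bound the number of descriptions by $(n+1)\cdot\bigl((n+1)2^n\bigr)^k$, i.e.\ you treat the choice of the $k$ flat/rank pairs as an ordered tuple with repetition allowed, each drawn freely from a universe of size $(n+1)2^n$. Taking logs this gives a main term $k\cdot\log\bigl((n+1)2^n\bigr) = k\bigl(n + \log(n+1)\bigr) = \Theta(kn) = \Theta\bigl(f(n)2^n\cdot n\bigr)$, and the final step of your display, asserting $O\bigl(k(n+\log n)\bigr) = O\bigl(f(n)2^n\log n\bigr)$, is simply false: it would require $n = O(\log n)$. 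With the intended application $f(n) = n^{-3/2}$ your argument as written yields $\log|\Mcal\cap\MM_n| = O(n^{-1/2}2^n)$, which is far above Knuth's lower bound and useless for Theorem~\ref{thm:asymptotic_small}.

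The fix is to observe that the flat cover is a \emph{set} of at most $k$ pairs from the universe $U := 2^{[n]}\times\{0,\dots,n\}$ of size $|U| = (n+1)2^n$, and to count it as such: the number of encodings is at most $\sum_{i=0}^{k}\binom{|U|}{i} \le k\binom{|U|}{k}$, and by the standard bound $\binom{N}{k}\le(\e N/k)^k$ (this is the paper's inequality~\eqref{eq:binom_bound1}) the log is at most $\log k + k\log\bigl(\e(n+1)2^n/k\bigr)$. The whole point is that $2^n/k = 1/f(n) = O(n^\alpha)$, so the quantity inside the log is only polynomial in $n$, giving the main term $k\cdot O(\log n) = O\bigl(f(n)2^n\log n\bigr)$. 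Your bound $N^k$ instead of $\binom{N}{k}$ discards exactly this savings, because when $k$ is within a polynomial factor of $N$ (as it is here), $\binom{N}{k}$ is exponentially smaller than $N^k$. The side conditions $f(n)\le 1$ and $1/f(n)=O(n^\alpha)$ do the work you attribute to them (controlling lower-order terms and ensuring $k\to\infty$), but they are also precisely what makes $\log(\e|U|/k) = O(\log n)$; without passing to the binomial coefficient you never get to use that.
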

\begin{proof}
	Let $k_n \defeq f(n) 2^n$. As each matroid in $\Mcal\cap\MM_n$ is uniquely determined by the collection $\{(F,r_M(F)) : F \in \ZZ\}$, which is a subset of $2^{[n]} \times \{0,1, \ldots, n\}$ of size at most $k_n$, it follows that
	\begin{equation*}
		|\Mcal\cap\MM_n| \le \sum_{i=0}^{k_n} \binom{2^n (n+1)}{i} \le k_n \binom{2^n(n+1)}{k_n} \le k_n \left(\frac{\e 2^n (n+1)}{k_n}\right)^{k_n}.
	\end{equation*}
	Upon taking logarithms and using the fact that $k_n \to \infty$ as $n \to \infty$, we find
	\begin{equation*}
		\log |\Mcal \cap \MM_n| \le \log(k_n) + k_n \log \left(\frac{\e 2^n (n+1)}{k_n}\right) =  k_n \log \left(\frac{\e 2^n (n+1)}{k_n}\right)(1+o(1)),
	\end{equation*}
	and the conclusion of the lemma follows by substituting the value of $k_n$.
\end{proof}
\subsection{Fractional cover complexity}
Let $M = (E,\BB)$ be a matroid with set of flats $\FF$, and consider the following linear programme,
\begin{equation}\label{eq:cover_complexity_lp}
	\begin{array}{rll}
		\text{min}	& \sum\limits_{F \in \FF} z_F 					& \\
		\text{s.t.}	& \sum\limits_{F : \text{$F$ covers $X$}} z_F \ge 1		& \forall X \in \binom{E}{r} \setminus \BB, \\
				& z_F \ge 0								& \forall F \in \FF.
	\end{array}
\end{equation}

\begin{definition}[Fractional cover complexity]
	$z \in \R_+^\FF$ is called a {\em fractional cover} if it is a feasible solution to the LP \eqref{eq:cover_complexity_lp}. The value $\kappa^*(M)$ of the LP is called the {\em fractional cover complexity} of $M$.
\end{definition}

If $\ZZ$ is a flat cover and $z_F$ is the indicator variable for $F \in \ZZ$, then $z \in \R_+^\FF$ is a feasible solution to the LP \eqref{eq:cover_complexity_lp} and hence a fractional cover. It follows that $\kappa^*(M) \le \kappa(M)$. Using a standard randomised rounding technique (see e.g.\ \cite{AlonSpencerBook}), we may also put an upper bound on $\kappa(M)$ in terms of $\kappa^*(M)$.

\begin{lemma}\label{lemma:kappa_rounding}
	For $M \in \MM_{n,r}$, we have $\kappa(M) \le \kappa^*(M)\left(\ln(\binom{n}{r}/\kappa^*(M)) + 1\right)$.
\end{lemma}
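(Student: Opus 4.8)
The plan is to apply the standard randomised rounding argument for set cover. Let $z$ be an optimal fractional cover of $M$, so $\sum_{F} z_F = \kappa^*(M)$, and set $p_F := \min\{1, t \cdot z_F\}$ for a scaling parameter $t > 0$ to be chosen. First I would form a random collection $\ZZ_1 \subseteq \FF$ by including each flat $F$ independently with probability $p_F$. For a fixed non-basis $X \in \binom{E}{r}\setminus\BB$, the feasibility constraint gives $\sum_{F \text{ covers } X} z_F \ge 1$, so $\sum_{F \text{ covers } X} p_F \ge \min\{1, t\}$; assuming $t \ge 1$ this sum is at least $t \cdot(\text{something} \ge 1$ in the un-truncated case$)$, and more carefully one gets that the probability that $X$ is \emph{not} covered by $\ZZ_1$ is at most $\prod_{F \text{ covers } X}(1 - p_F) \le \exp(-\sum p_F) \le \e^{-t}$ (using $1-x \le \e^{-x}$ and that the truncation only helps, since if some $p_F = 1$ then $X$ is covered with probability $1$).

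Next I would bound the expected deficiency. The expected size of $\ZZ_1$ is $\sum_F p_F \le t \sum_F z_F = t\,\kappa^*(M)$. The expected number of uncovered non-bases is at most $\binom{n}{r}\e^{-t}$. For each uncovered non-basis we can add one flat to the cover to fix it (every non-basis, being dependent, is covered by its own closure, which is a flat). Hence there exists an outcome in which $|\ZZ_1| + (\#\text{uncovered non-bases}) \le t\,\kappa^*(M) + \binom{n}{r}\e^{-t}$, and completing $\ZZ_1$ to a genuine flat cover by adding one flat per uncovered non-basis yields $\kappa(M) \le t\,\kappa^*(M) + \binom{n}{r}\e^{-t}$.

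Finally I would optimise over $t$: choosing $t = \ln\big(\binom{n}{r}/\kappa^*(M)\big)$ makes the second term equal to $\kappa^*(M)$, giving
\begin{equation*}
	\kappa(M) \le \kappa^*(M)\ln\!\Big(\binom{n}{r}/\kappa^*(M)\Big) + \kappa^*(M) = \kappa^*(M)\Big(\ln\big(\binom{n}{r}/\kappa^*(M)\big) + 1\Big),
\end{equation*}
which is the claimed bound. One should check $t \ge 1$, i.e.\ $\binom{n}{r} \ge \e\,\kappa^*(M)$; since $\kappa^*(M) \le \kappa(M)$ and a trivial flat cover (e.g.\ all hyperplanes, or in the worst case one flat per non-basis) shows $\kappa^*(M)$ is not too large, but more simply $\kappa^*(M) \le \binom{n}{r}$ always, and the edge case where $t < 1$ can be handled by noting the bound is trivially true there (take $\ZZ$ to be one closure-flat per non-basis, giving $\kappa(M) \le \binom{n}{r}$, which is dominated by the RHS when $\kappa^*(M)$ is within a constant factor of $\binom{n}{r}$).

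The main obstacle, such as it is, is purely bookkeeping: handling the truncation $p_F = \min\{1, t z_F\}$ cleanly so that the per-non-basis failure probability bound $\e^{-t}$ genuinely holds (the key observation being that truncation to $1$ can only decrease the failure probability), and dispatching the degenerate regime $\binom{n}{r} < \e\,\kappa^*(M)$ where the unoptimised choice of $t$ would be out of range. Neither is deep; the argument is the textbook LP-rounding bound for set cover specialised to the covering LP \eqref{eq:cover_complexity_lp}.
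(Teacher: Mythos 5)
Your proof is correct and follows essentially the same randomised-rounding strategy as the paper's. The only difference is a cosmetic one in the rounding scheme: you include each flat independently with probability $\min\{1,t z_F\}$ and then optimise over $t$, while the paper draws $\big\lceil \kappa^*(M)\ln\big(\binom{n}{r}/\kappa^*(M)\big)\big\rceil$ samples with replacement from the distribution $z_F/\kappa^*(M)$; both variants use the estimate $1-x\le\e^{-x}$ and patch uncovered non-bases with their closures.
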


\begin{proof}
	We use a probabilistic method to prove the existence of a cover of at most the specified size. Let $z \in \R^{\FF}$ be a solution to the LP \eqref{eq:cover_complexity_lp}, and let $\kappa^*(M)$ be its value. Let $p \in \R^{\FF}$ be the probability distribution on $\FF$ with $p_F = z_F / \kappa^*(M)$, and let $\ZZ_0 \subseteq \FF$ be the result of drawing $m \defeq \lceil \kappa^*(M) \ln\left(\binom{n}{r}/\kappa^*(M)\right)\rceil$ objects (independently, with repetitions) from $\FF$ according to $p$.
	
	The probability that any fixed non-basis is not covered by $\ZZ_0$ is
	\begin{equation}\label{eq:prob_not_covered}
		\left(1-\sum_{F:\text{$F$ covers $X$}}\frac{z_F}{\kappa^*(M)}\right)^m \le \left(\e^{-1/\kappa^*(M)}\right)^m = \frac{\kappa^*(M)}{\binom{n}{r}},
	\end{equation}
	where we used the inequality $1-x \le \e^{-x}$ for all $x$. Let $\mathcal{N}$ be the collection of non-bases not covered by $\ZZ_0$. By Equation \eqref{eq:prob_not_covered}, the number of elements in $\mathcal{N}$ is in expectation at most $\kappa^*(M)$. The collection $\ZZ \defeq \ZZ_0 \cup\{\cl(X) : X \in \mathcal{N}\}$ is a flat cover, and the number of elements in $\ZZ$ is in expectation at most $m + \kappa^*(M)$. This proves the existence of a flat cover of size at most $m + \kappa^*(M)$.
\end{proof}

Combining Lemma \ref{lemma:upperbound_by_kappa} and Lemma \ref{lemma:kappa_rounding}, we obtain the following bound on the size of $\Mcal$ in terms of $\kappa^*(M)$.

\begin{lemma}\label{lemma:upperbound_by_kappa*}
	Suppose that there exists a function $f:\NN \to \R_+$ such that $\kappa^*(M) \le f(n)2^n$ for all $M \in \Mcal \cap \MM_n$ and $n$ sufficiently large. If $f(n)\log(n) \le 1$ eventually, and $1/f(n) = O(n^\alpha)$ for some positive constant $\alpha$, then
	$ \log |\Mcal \cap \MM_n| = O\left(f(n)2^n \log^2(n)\right)$.
\end{lemma}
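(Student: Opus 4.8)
The plan is to chain the two lemmas the statement explicitly invites us to combine. Given the hypothesis $\kappa^*(M) \le f(n)2^n$ for all $M \in \Mcal \cap \MM_n$, the first step is to upgrade this into a bound on the integer cover complexity $\kappa(M)$ via Lemma~\ref{lemma:kappa_rounding}. That lemma gives $\kappa(M) \le \kappa^*(M)\left(\ln\left(\binom{n}{r}/\kappa^*(M)\right) + 1\right)$ for $M \in \MM_{n,r}$. The right-hand side is increasing in $\kappa^*(M)$ on the relevant range (the function $t \mapsto t(\ln(C/t)+1)$ has derivative $\ln(C/t) \ge 0$ as long as $t \le C$, and here $\kappa^*(M) \le \binom{n}{r}$ since the trivial all-ones cover on the non-bases is feasible after taking closures, or more simply because $\kappa^*(M) \le \kappa(M) \le \binom{n}{r}$), so we may substitute the upper bound $f(n)2^n$ for $\kappa^*(M)$ to get $\kappa(M) \le f(n)2^n\left(\ln\left(\binom{n}{r}/(f(n)2^n)\right) + 1\right)$.

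The second step is to bound the logarithmic factor crudely. Using $\binom{n}{r} \le 2^n$ and $1/f(n) = O(n^\alpha)$, we have $\binom{n}{r}/(f(n)2^n) \le 1/f(n) = O(n^\alpha)$, so $\ln\left(\binom{n}{r}/(f(n)2^n)\right) + 1 = O(\log n)$. Hence $\kappa(M) \le g(n)2^n$ where $g(n) \defeq f(n) \cdot O(\log n)$; concretely $g(n) = c\, f(n)\log n$ for a suitable constant $c$ and $n$ large. Now I want to apply Lemma~\ref{lemma:upperbound_by_kappa} with $f$ replaced by $g$, so I must check its hypotheses: $g(n) \le 1$ eventually, which holds because $f(n)\log(n) \le 1$ eventually by assumption (absorbing the constant $c$ requires only that we use a slightly weaker eventual bound, or restate as $g(n) = O(f(n)\log n) = O(1/\text{something})$ — the point is $g(n)\to 0$, in particular $g(n)\le 1$ eventually); and $1/g(n) = O(1/f(n)) = O(n^\alpha)$, which is immediate.

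The third step is simply to read off the conclusion: Lemma~\ref{lemma:upperbound_by_kappa} yields $\log |\Mcal \cap \MM_n| = O\left(g(n)2^n \log n\right) = O\left(f(n)\log(n) \cdot 2^n \cdot \log n\right) = O\left(f(n)2^n\log^2(n)\right)$, which is exactly the claimed bound. One subtlety worth flagging: $\MM_n$ decomposes over ranks $r$, and Lemma~\ref{lemma:kappa_rounding} is stated per rank class $\MM_{n,r}$; but the resulting bound $\kappa(M)\le g(n)2^n$ holds uniformly over all $r \in \{0,\dots,n\}$ (the $\binom{n}{r}$ only appears inside a logarithm and is maximized by $\binom{n}{\lfloor n/2\rfloor}\le 2^n$), so the uniform hypothesis needed by Lemma~\ref{lemma:upperbound_by_kappa} is genuinely met, and no rank-by-rank summation is needed.

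I do not expect a serious obstacle here — this is a bookkeeping composition of two already-proved lemmas. The only place requiring a little care is the monotonicity argument that lets us substitute the upper bound on $\kappa^*(M)$ into the expression from Lemma~\ref{lemma:kappa_rounding} (one must know $\kappa^*(M) \le \binom{n}{r}$ to stay on the increasing branch of $t\mapsto t(\ln(C/t)+1)$), and the mild constant-absorption when passing from $g(n) = c f(n)\log n$ to the hypothesis $g(n)\le 1$ eventually, which is harmless since the hypothesis $f(n)\log n \le 1$ eventually in fact forces $f(n)\log n \to 0$ is not needed — even $\le 1$ suffices if we note $c f(n)\log n \le 1$ eventually follows from, say, strengthening to $f(n)\log n = o(1)$, or one simply carries the constant through the $O(\cdot)$ in the conclusion, which absorbs it anyway.
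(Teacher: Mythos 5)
Your proof chains the same two lemmas as the paper (Lemma~\ref{lemma:kappa_rounding} followed by Lemma~\ref{lemma:upperbound_by_kappa}) and uses a monotonicity argument to substitute the upper bound on $\kappa^*(M)$, which is exactly the paper's route. There is, however, a small but genuine gap in your substitution step. You observe that $t \mapsto t\left(\ln(C/t)+1\right)$ with $C=\binom{n}{r}$ is increasing for $t \le C$ and that $\kappa^*(M)\le\binom{n}{r}$, but to replace $\kappa^*(M)$ by the upper bound $f(n)2^n$ you would also need $f(n)2^n \le \binom{n}{r}$, and this can fail for $r$ near $0$ or $n$ (where $\binom{n}{r}$ is far below $2^n$). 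The paper avoids the issue by first weakening $\binom{n}{r}$ to $2^n$ \emph{before} invoking monotonicity, and then arguing that $q \mapsto q\ln(1/q)$ is increasing on $[0,\e^{-1}]$ in the normalized variable $q=\kappa^*(M)/2^n$; with this normalization the substitution needs only $f(n)\le \e^{-1}$, which is implied by $f(n)\log n \le 1$ for $n$ large. Your gap is easy to patch (when $f(n)2^n>\binom{n}{r}$, use the trivial bound $\kappa(M)\le\binom{n}{r}<f(n)2^n$ directly), but as written the step ``so we may substitute'' is not fully justified, and your closing remark that the rank-uniformity is ``genuinely met'' relies on it.
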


\begin{proof}
	In view of Lemma \ref{lemma:kappa_rounding} and Equation \eqref{eq:binom_bound2}, we have
	\begin{equation*}
		\kappa(M) \le \kappa^*(M)\ln\left(\binom{n}{\lfloor n/2 \rfloor}/\kappa^*(M) + 1\right) \le \kappa^*(M)\ln\left(\frac{2^n}{\kappa^*(M)} + 1\right)
	\end{equation*}
	for $M \in \Mcal \cap \MM_n$. As the function $q \mapsto q \ln(1/q)$ is increasing on $[0,\e^{-1}]$, we find
	\begin{equation*}
		\kappa(M) \le f(n) 2^n \ln\left(\frac{1}{f(n)} + 1\right) = O(f(n)\log(n)2^n).
	\end{equation*}
	The claim now follows from an application of Lemma \ref{lemma:upperbound_by_kappa}.
\end{proof}

Let $\Mcal$ be a class of matroids that is closed under taking contractions. We refer to the following lemma as the Blow-Up Lemma, as we will use it to convert upper bounds on $\kappa^*(M)$ for $M \in \Mcal$ of fixed (small) rank to matroids in $\Mcal$ of general rank. Its proof is based on the fact that the existence of small fractional covers for matroids of low rank can be used to construct small fractional covers of matroids of higher rank.

\begin{lemma}[Blow-Up Lemma]\label{lemma:blow_up}
	Let $\Mcal$ be a class of matroids that is closed under taking contractions. Then for any $t < r < n$, we have
	\begin{equation}
		\frac{1}{\binom{n}{r}}\max\{\kappa^*(M) : M \in \Mcal \cap \MM_{n,r}\} \le \frac{1}{\binom{n-t}{r-t}}\max\{\kappa^*(M) : M \in \Mcal \cap \MM_{n-t,r-t}\}.
	\end{equation}
\end{lemma}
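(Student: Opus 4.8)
The plan is to reduce the statement to the case $t=1$ and then build, for a fixed rank-$r$ matroid $M\in\Mcal$, an explicit small fractional cover of $M$ out of optimal fractional covers of its single-element contractions. Write $K_{n,r}:=\max\{\kappa^*(M):M\in\Mcal\cap\MM_{n,r}\}$. It suffices to prove $K_{n,r}/\binom nr\le K_{n-1,r-1}/\binom{n-1}{r-1}$ whenever $1<r<n$, since chaining $t$ instances of this inequality at the parameters $(n-i,r-i)$ for $0\le i\le t-1$ yields the lemma — each instance being legitimate because $r-i\ge r-t+1>1$ and $r-i<n-i$. As $\binom nr/\binom{n-1}{r-1}=n/r$, the $t=1$ case amounts to showing $\kappa^*(M)\le\frac nr\,K_{n-1,r-1}$ for every $M\in\Mcal\cap\MM_{n,r}$.

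To build the cover, fix $M\in\Mcal\cap\MM_{n,r}$ with ground set $E=[n]$, let $L$ be its set of loops, and $F_L=\cl_M(\emptyset)=L$. For each non-loop $e$ the contraction $M/e$ lies in $\Mcal\cap\MM_{n-1,r-1}$, so $\kappa^*(M/e)\le K_{n-1,r-1}$; choose an optimal fractional cover $z^e$ of $M/e$. Since $e$ is not a loop, $F\mapsto F\setminus e$ is a bijection from the flats of $M$ containing $e$ onto the flats of $M/e$, with $r_{M/e}(F\setminus e)=r_M(F)-1$, so $z^e$ transports to a vector $\tilde z^e\in\R_+^{\FF(M)}$ supported on the flats through $e$ with $\|\tilde z^e\|_1=\kappa^*(M/e)$. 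Put
\[
 z\;:=\;\frac1r\sum_{e\in E\setminus L}\tilde z^e\;+\;\frac{\min(|L|,r)}{r}\,\chi_{F_L},
\]
where $\chi_{F_L}$ is the indicator vector of $F_L$.

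Next I would check that $z$ is a fractional cover and estimate $\|z\|_1$. For a non-basis $X$ and a non-loop $e\in X$, the set $X\setminus e$ has size $r-1=r(M/e)$ and is dependent in $M/e$ (otherwise $X$ would be a basis of $M$), so it is a non-basis of $M/e$; moreover, for this $X$ the flats of $M$ through $e$ that cover $X$ correspond under the bijection exactly to the flats of $M/e$ that cover $X\setminus e$, whence $\sum_{F\text{ covers }X}\tilde z^e_F=\sum_{F'\text{ covers }X\setminus e}z^e_{F'}\ge1$. Discarding the nonnegative terms with $e\notin X$ and using that $F_L$ covers $X$ exactly when $X$ meets $L$, the covering constraint of $X$ receives total weight at least $\frac{|X\setminus L|}{r}$, plus $\frac{\min(|L|,r)}{r}$ when $X\cap L\ne\emptyset$; as $|X\cap L|\le\min(|L|,r)$, this is at least $1$. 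For the value, $\|z\|_1\le\frac{n-|L|}{r}K_{n-1,r-1}+\frac{\min(|L|,r)}{r}$; if $M$ is loopless this is $\le\frac nr K_{n-1,r-1}$, and if $M$ has a loop then that loop survives in every $M/e$, which has rank $r-1\ge1$ and hence a non-basis, so $K_{n-1,r-1}\ge1$ and therefore $\frac{\min(|L|,r)}{r}\le\frac{|L|}{r}\le\frac{|L|}{r}K_{n-1,r-1}$, again giving $\|z\|_1\le\frac nr K_{n-1,r-1}$. Thus $\kappa^*(M)\le\frac nr K_{n-1,r-1}$, and maximizing over $M$ settles the $t=1$ case.

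The step I expect to be the main obstacle is the treatment of loops: a lifted cover $\tilde z^e$ contributes to the constraint of a non-basis $X$ only through the non-loop elements of $X$, so a loop-containing non-basis is under-covered by a fraction $|X\cap L|/r$ that must be repaired by the loop flat, and the resulting extra weight is absorbed only because the existence of a loop already forces $K_{n-1,r-1}\ge1$. The other ingredients — the correspondence between the flats of $M$ through $e$ and those of $M/e$, the identity $\binom nr/\binom{n-1}{r-1}=n/r$, and the reduction from general $t$ — are routine.
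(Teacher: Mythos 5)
Your proof is correct and follows essentially the same approach as the paper: construct a fractional cover of $M$ by lifting and averaging optimal fractional covers of contractions $M/S$, with a separate indicator term for the dependent $t$-subsets (loops, in your $t=1$ setting). The only differences are presentational — you reduce to $t=1$ and chain, whereas the paper treats general $t$ directly and uses the indicator of $\cl_M(S)$ when $S$ is dependent — and your explicit observation that a loop in $M$ already forces $K_{n-1,r-1}\ge 1$ makes the cost estimate slightly more careful than the paper's.
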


\begin{proof}
	Let $M \in \Mcal\cap\MM_{n,r}$. We construct a fractional cover $z:\FF(M) \to \R_+$ of bounded cost from a collection of `local' fractional covers $z^S:\FF(M) \to \R_+$, one for each $t$-subset $S$ of $[n]$. Let $S$ be such a set. The construction of $z^S$ depends on whether $S$ is dependent or not in $M$:
	\begin{enumerate}
		\begin{item}
			If $S$ is dependent, put $z^S(F) = 1$ if $F = \cl_M(S)$ and $z^S(F) = 0$ otherwise. Note that $\cl_M(S)$ covers each $r$-set containing $S$.
		\end{item}
		
		\begin{item}
			If $S$ is independent, let $M/S$ be the matroid obtained from $M$ by contracting $S$. 
			Let $z':\FF(M/S)\to\R_+$ be a minimal fractional cover of $M/S$ and let $z^S(F) = z'(F\setminus S)$ if $S \subseteq F$ and $z^S(F) = 0$ otherwise. 
		\end{item}
	\end{enumerate}
	Note that in the latter case, $M/S$ is isomorphic to a matroid in  $\Mcal\cap\MM_{n-t,r-t}$, so that for each $S$ we have 
	$$\sum_{F \in \FF(M)} z^S(F)\leq \max\{\kappa^*(M) : M \in \Mcal \cap \MM_{n-t,r-t}\}.$$
	We now define the fractional cover $z:\FF(M)\rightarrow \R_+$, putting
	\begin{equation*}
		z(F) \defeq \frac{1}{\binom{r}{t}} \sum_{S \in \binom{[n]}{t}} z^S(F).
	\end{equation*}
	This is indeed a fractional cover, as for each non-basis $X$ we have
	\begin{equation*}
		\sum_{F : \text{$F$ covers $X$}} z(F) = \frac{1}{\binom{r}{t}} \sum_{F : \text{$F$ covers $X$}} \sum_{S \in \binom{[n]}{t}} z^S(F) \ge \frac{1}{\binom{r}{t}} \sum_{S \in \binom{X}{t}} \sum_{F : \text{$F$ covers $X$}} z^S(F) \ge 1,
	\end{equation*}
	noting that  $\sum\limits_{F : \text{$F$ covers $X$}} z^S(F) \ge 1$ for each $S \in \binom{X}{t}$. Moreover
		\begin{equation*}
		\kappa^*(M) \le \sum_{F \in \FF(M)} z(F) = \frac{1}{\binom{r}{t}} \sum_{S \in \binom{[n]}{t}} \sum_{F \in \FF(M)} z^S(F) \le \frac{\binom{n}{t}}{\binom{r}{t}}\max\{\kappa^*(M') : M' \in \Mcal\cap\MM_{n-t,r-t}\}.
	\end{equation*}
	The claim then follows from the identity $\frac{\binom{n}{t}}{\binom{r}{t}} = \frac{n!(r-t)!}{(n-t)!r!} = \frac{\binom{n}{r}}{\binom{n-t}{r-t}}$.
\end{proof}
We will apply the Blow-Up Lemma for counting matroids in contraction-closed classes, as follows.
\begin{theorem} \label{theorem:count}Let $\Mcal$ be a contraction-closed class of matroids. If for some natural number $s$, and positive constants $c$ and $\epsilon$ we have 
$\max\{\kappa(M)\mid M\in \Mcal \cap \MM_{n,s}\}\leq  cn^{s-1-\epsilon}$, then
$$\log |\Mcal \cap \MM_{n}|\leq  O(\frac{1}{n^{3/2+\epsilon}}2^n\log^2(n))  \text{ as } n\rightarrow\infty.$$
\end{theorem}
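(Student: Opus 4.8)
The plan is to feed the hypothesis through the Blow-Up Lemma to obtain a bound on $\kappa^*$ for matroids in $\Mcal$ of \emph{arbitrary} rank, and then invoke Lemma~\ref{lemma:upperbound_by_kappa*}. First I would fix the rank. For $M \in \Mcal \cap \MM_{n,r}$ with $r \ge s$, apply the Blow-Up Lemma with $t = r - s$, so that $r - t = s$ and $n - t = n - r + s$. This gives
\begin{equation*}
	\kappa^*(M) \le \frac{\binom{n}{r}}{\binom{n-r+s}{s}}\max\{\kappa^*(M') : M' \in \Mcal \cap \MM_{n-r+s,\,s}\} \le \frac{\binom{n}{r}}{\binom{n-r+s}{s}}\, c\,(n-r+s)^{s-1-\epsilon},
\end{equation*}
using $\kappa^* \le \kappa$ and the hypothesis on rank-$s$ members of $\Mcal$. (For $r < s$ one bounds $\kappa^*(M) \le \kappa(M) \le \binom{n}{r} \le \binom{n}{s}$ directly, which is negligible; and ranks $0,1,n-1,n$ contribute $O(1)$ matroids each.)

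The main step is then to show that the right-hand side above, maximised over all $r$, is $O(n^{s-1-\epsilon})\cdot$(a polynomial factor absorbed into the $\log$), i.e. that it is $f(n)2^n$ for a suitable $f$ with $f(n)2^n = O(n^{s-1-\epsilon} \cdot \mathrm{poly})$ — in fact I expect $\max_r \binom{n}{r}/\binom{n-r+s}{s} \cdot (n-r+s)^{s-1-\epsilon}$ to be of order $n^{s-1-\epsilon}$ itself, up to constants: writing $\binom{n-r+s}{s} \ge \big(\frac{n-r+s}{s}\big)^s / \text{const}$ and $\binom{n}{r} \le 2^n$, the quotient $(n-r+s)^{s-1-\epsilon}/(n-r+s)^s = (n-r+s)^{-1-\epsilon} \le 1$, but this crude bound loses the $2^n$ savings. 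More carefully, one should not replace $\binom{n}{r}$ by $2^n$ blindly; instead set $k := n - r$, so the bound reads $\binom{n}{k}(k+s)^{-1-\epsilon}\cdot c' $ roughly (after cancelling $s^s$), and $\binom{n}{k} \le \binom{n}{\lfloor n/2\rfloor} \le 2^n/\sqrt{n}\cdot\sqrt{2/\pi}$ by \eqref{eq:binom_bound2}; combined with $(k+s)^{-1-\epsilon}$ this is maximised when $k$ is small, giving a worst case near $k=0$ of order $\binom{n}{0}\cdot s^{-1-\epsilon}$... — here is the genuine obstacle: one must check that the factor $\binom{n}{r}/\binom{n-r+s}{s}$ never exceeds $2^n \cdot O(n^{-3/2-\epsilon}) \cdot$(something), i.e. that when $\binom{n}{r}$ is as large as $2^n/\sqrt n$ the denominator $\binom{n-r+s}{s}$ is correspondingly of order $n^s$ (true, since then $r \approx n/2$ so $n - r + s \approx n/2$), so the quotient is $O(2^n n^{-1/2} / n^s) \cdot (n/2)^{s-1-\epsilon} = O(2^n n^{-1/2-1-\epsilon}) = O(2^n n^{-3/2-\epsilon})$; and when $r$ is close to $n$ the numerator is small enough to compensate. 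Carrying out this two-regime estimate (middle-range $r$ versus extreme $r$) carefully is the crux.

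Granting that estimate, we have $\kappa^*(M) \le f(n) 2^n$ for all $M \in \Mcal \cap \MM_n$ with $f(n) = \Theta(n^{-3/2-\epsilon})$ (after verifying the maximum over ranks). Then $f(n)\log n \le 1$ eventually and $1/f(n) = O(n^{3/2+\epsilon}) = O(n^\alpha)$, so Lemma~\ref{lemma:upperbound_by_kappa*} applies and yields
\begin{equation*}
	\log|\Mcal \cap \MM_n| = O\big(f(n) 2^n \log^2 n\big) = O\Big(\tfrac{1}{n^{3/2+\epsilon}} 2^n \log^2 n\Big),
\end{equation*}
which is exactly the claimed bound. I expect the only delicate point to be the rank-maximisation argument sketched above; everything else is assembling the quoted lemmas, with the minor bookkeeping that ranks outside $\{2,\dots,n-2\}$ (or wherever $\kappa$ degenerates) contribute only $2^{O(n)}$-negligibly few matroids and can be handled separately by the trivial bound $|\MM_{n,r}| \le 2^{\binom{n}{r}}$ together with \eqref{eq:binom_bound1}.
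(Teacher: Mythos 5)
Your proposal follows essentially the same route as the paper: apply the Blow-Up Lemma with $t=r-s$ to push the rank-$s$ hypothesis up to a bound on $\kappa^*(M)$ for general rank, maximise the resulting quantity $\binom{n}{r}/\binom{n-r+s}{s}\cdot O(n^{s-1-\epsilon})$ over $r$ to get $O\bigl(2^n n^{-3/2-\epsilon}\bigr)$, and then invoke Lemma~\ref{lemma:upperbound_by_kappa*} with $f(n)=n^{-3/2-\epsilon}$. The two-regime maximisation you flag as ``the crux'' is indeed the only nonroutine step; the paper compresses it into the single phrase ``maximising over $r$ yields,'' and your sketch (middle $r$: numerator $\approx 2^n/\sqrt n$ but denominator $\approx n^s$; extreme $r$: numerator exponentially small) is the right way to verify it, so there is no gap beyond what the paper itself leaves implicit.
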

\proof Using the Blow-Up Lemma with $t = r-s$, we find that
\begin{equation*}
	\kappa^*(M) \le \frac{cn^{s-1-\epsilon}}{\binom{n-r+s}{s}}\binom{n}{r}
\end{equation*}
for each $M \in \Mcal \cap \MM_{n,r}$. Maximising over $r$ yields
\begin{equation}\label{eq:small_excluded_allr}
	\kappa^*(M) \le O\left(\frac{1}{n^{1+\epsilon}}\binom{n}{\lfloor n/2 \rfloor}\right) =O\left(\frac{1}{n^{3/2+\epsilon}}2^n\right),
\end{equation}
for all $M \in \Mcal \cap \MM_n$. An application of Lemma \ref{lemma:upperbound_by_kappa*} with $f(n) = n^{-3/2-\epsilon}$ then proves the Theorem.
\endproof
As a corollary, we obtain the weaker of the two upper bounds on the number of matroids that appeared in \cite{BansalPendavinghVanderPol2012}. 
\begin{corollary}$\log \log m_n \leq n- (3/2)\log n + 2 \log \log n + O(1)$.
\end{corollary}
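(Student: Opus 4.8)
The plan is to apply Theorem \ref{theorem:count} to the class $\Mcal = \MM$ of all matroids, for which the contraction-closure hypothesis is trivial, and then optimise the choice of the auxiliary parameter $s$. The point is that a matroid $M \in \MM_{n,s}$ of rank $s$ has at most $\binom{n}{s}$ non-bases, and the collection $\{\cl_M(X) : X \text{ a non-basis}\}$ is a flat cover, so the trivial bound $\kappa(M) \le \binom{n}{s} \le n^s$ holds. This is too weak by a factor of $n$ to feed into Theorem \ref{theorem:count} as stated (which wants $cn^{s-1-\epsilon}$), so I would instead let $s$ grow slowly with $n$ and track constants carefully: with $s = s(n)$ one has $\binom{n}{s} \le (en/s)^s$, and one wants to choose $s$ so that, after running the Blow-Up Lemma with $t = r - s$ and maximising over $r$, the resulting bound on $\kappa^*(M)$ for $M \in \MM_n$ is as small as possible.

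Concretely, I would redo the computation in the proof of Theorem \ref{theorem:count} keeping $s$ free: from the Blow-Up Lemma with $t = r-s$,
\begin{equation*}
	\kappa^*(M) \le \frac{\binom{n}{s}}{\binom{n-r+s}{s}}\binom{n}{r} \le n^s \cdot \frac{\binom{n}{r}}{\binom{n-r+s}{s}}
\end{equation*}
for $M \in \MM_{n,r}$, and then maximise the right-hand side over $r$. The binomial factors are maximised near $r \approx n/2$, and there $\binom{n}{r}/\binom{n-r+s}{s}$ is of order $2^n n^{-1/2} / \binom{n/2}{s} \approx 2^n n^{-1/2} (2s/n)^s$ up to polynomial-in-$n$ and constant factors. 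So the bound on $\kappa^*(M)$ over all $M \in \MM_n$ is, very roughly, $2^n n^{-1/2} \cdot (en/s)^s \cdot (2s/n)^s \cdot \mathrm{poly}(n) = 2^n n^{-1/2}(2e)^s\,\mathrm{poly}(n)$. Then Lemma \ref{lemma:kappa_rounding} (or Lemma \ref{lemma:upperbound_by_kappa*}) turns this into $\log m_n = O\big(2^n n^{-1/2}(2e)^s\log n\big)$, whence $\log\log m_n \le n - \tfrac12\log n + s\log(2e) + \log\log n + O(1)$. Taking $s$ to be a large enough absolute constant — the bound is valid for every fixed $s$, and one checks the Blow-Up Lemma applies once $n$ is large relative to $s$ — absorbs the $s\log(2e)$ term into the $O(1)$ and yields exactly $\log\log m_n \le n - (3/2)\log n + 2\log\log n + O(1)$.

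The main obstacle is bookkeeping rather than conceptual: one must verify that the $r\mapsto \binom{n}{r}/\binom{n-r+s}{s}$ maximisation really does peak (up to polynomial factors) at $r$ near $n/2$ and contributes a clean $2^n/\sqrt{n}$ via \eqref{eq:binom_bound2}, and one must make sure the hypotheses of Lemma \ref{lemma:upperbound_by_kappa*} ($f(n)\log n \le 1$ eventually, $1/f(n) = O(n^\alpha)$) are met with $f(n) = c\, n^{-3/2}$ for the constant $c$ produced above — they are, for any fixed $s$. A slightly delicate point is that Theorem \ref{theorem:count} is stated with the exponent $s-1-\epsilon$ and a genuinely smaller power of $n$ in $\kappa$; since for $\Mcal = \MM$ we only have $\kappa(M) \le n^s$ with no saving, I would not invoke Theorem \ref{theorem:count} as a black box but rather re-run its proof (the Blow-Up Lemma plus Lemma \ref{lemma:kappa_rounding} plus Lemma \ref{lemma:upperbound_by_kappa}) with the honest bound $\kappa(M)\le\binom{n}{s}$ for rank-$s$ matroids, which is exactly the argument sketched above.
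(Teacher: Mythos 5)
Your approach has a genuine gap, and the issue is that the trivial flat cover for rank-$s$ matroids gives nothing useful in combination with the Blow-Up Lemma, no matter how $s$ is chosen.

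Concretely, the rank-$s$ matroid that the Blow-Up Lemma hands you lives on a ground set of size $n' = n - r + s$, not $n$. The hypothesis you feed in should therefore be a bound on $\kappa$ for matroids in $\MM_{n',s}$, and your trivial bound is $\kappa(M') \le \binom{n'}{s} = \binom{n-r+s}{s}$. Plugging this into the Blow-Up Lemma yields
\begin{equation*}
\kappa^*(M) \;\le\; \frac{\binom{n}{r}}{\binom{n-r+s}{s}} \cdot \binom{n-r+s}{s} \;=\; \binom{n}{r},
\end{equation*}
which is trivially true and useless. Your displayed formula writes $\binom{n}{s}$ in the numerator where $\binom{n-r+s}{s}$ belongs; the subsequent asymptotics $2^n n^{-1/2}(2e)^s$ only look interesting because of this substitution. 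Moreover, even granting that formula, the last step of your argument does not go through: you derive $\log\log m_n \le n - \tfrac12\log n + s\log(2e) + \log\log n + O(1)$, and then assert that choosing $s$ constant yields $n - \tfrac32\log n + 2\log\log n + O(1)$. But $n - \tfrac12\log n$ is asymptotically \emph{larger} than $n - \tfrac32\log n + 2\log\log n$; absorbing $s\log(2e)$ into the $O(1)$ cannot manufacture the missing $\log n$.

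The paper's observation, which your sketch skips over, is that rank-$1$ matroids have cover complexity at most $1$, not $\binom{n}{1} = n$: the single flat $\cl_M(\emptyset)$ (the set of loops) covers every non-basis of a rank-$1$ matroid. This is a genuine saving of a factor $n$ over the trivial bound, and it is exactly where the extra $n^{-1}$ (hence the extra $-\log n$ after two logarithms) comes from. With $s = 1$, $\epsilon = 0$, and the bound $\max\{\kappa(M) : M \in \MM_{n,1}\} \le 1 = n^{s-1-\epsilon}$, Theorem \ref{theorem:count} gives $\log m_n = O(2^n n^{-3/2}\log^2 n)$, and taking a logarithm yields the corollary. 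So the fix is not to optimise over $s$, but to notice that the $s = 1$ case already meets the hypothesis of the theorem with a genuine polynomial saving.
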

\proof We have $\max\{\kappa(M)\mid M\in \Mcal \cap \MM_{n,1}\}\leq 1$, since $\ZZ=\{\cl_M(\emptyset)\}$ is a flat cover for any matroid $M$ of rank 1. Applying the theorem 
to the class $\Mcal$ of all matroids with $s=1$ and $\epsilon=0$, we obtain 
\begin{equation*}\log m_n=\log |\MM_{n}|\leq  O\left(\frac{1}{n^{3/2}}2^n\log^2(n)\right).\end{equation*}
Taking logarithms, the corollary follows.
\endproof
We will apply the Blow-Up Lemma to other minor-closed classes in Section \ref{sec:small}, where we will use that $\Mcal$ is also closed under deletion to bound the fractional cover complexity for matroids of a certain fixed rank.

\subsection{Lower bounding the cover complexity} We show how to give lower bounds on the cover complexity, and we exhibit two classes of matroids with a cover complexity that is exponential in the number of points of the matroid.
\begin{lemma}\label{lemma:cc4}
	If $N$ can be obtained from $M$ by relaxing a circuit-hyperplane, then $\kappa(M) = \kappa(N) + 1$.
\end{lemma}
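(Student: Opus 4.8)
The plan is to establish the two inequalities $\kappa(M) \le \kappa(N)+1$ and $\kappa(N) \le \kappa(M)-1$ separately. Let $r := r(M)$ and let $H$ be the circuit-hyperplane of $M$ whose relaxation yields $N$, so that $\BB(N) = \BB(M)\cup\{H\}$; since $H$ is simultaneously a circuit and a hyperplane we have $|H| = r$, hence $H$ is a non-basis of $M$, $r(N) = r$, and the non-bases of $N$ are exactly the non-bases of $M$ other than $H$. The only structural input I will use is the change of the rank function: the independent sets of $N$ are those of $M$ together with the single new set $H$, and because $\cl_M(H) = H$ this gives $r_N(X) = r_M(X)$ for all $X \neq H$ and $r_N(H) = r_M(H)+1 = r$; in particular $r_N(Y) \ge r_M(Y)$ for every $Y$, with equality unless $Y = H$. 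I would record these facts first.

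For $\kappa(M) \le \kappa(N)+1$, I would start from a minimum flat cover $\ZZ$ of $N$ and argue that $\ZZ' := \{\cl_M(F) : F \in \ZZ\} \cup \{H\}$ is a flat cover of $M$. Every member is a flat of $M$; the non-basis $H$ is covered by $H$ itself since $|H\cap H| = r > r_M(H)$; and any other non-basis $X$ of $M$ is also a non-basis of $N$, hence covered there by some $F\in\ZZ$, so that $|X\cap\cl_M(F)| \ge |X\cap F| > r_N(F) \ge r_M(F) = r_M(\cl_M(F))$. Thus $\kappa(M) \le |\ZZ'| \le \kappa(N)+1$.

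For the reverse inequality, the crux is the claim that the \emph{only} flat of $M$ covering the non-basis $H$ is $H$ itself: if a flat $F$ contains $H$ and covers it, then $r_M(F) < |H| = r$, forcing $r_M(F) = r-1$ and hence $F = H$ (a rank-$(r-1)$ flat containing the hyperplane $H$); and if $F$ does not contain $H$, then $H\cap F$ is a proper, hence independent, subset of the circuit $H$, so $|H\cap F| = r_M(H\cap F) \le r_M(F)$ and $F$ fails to cover $H$. Since $M$ has the non-basis $H$, every flat cover of $M$ is nonempty and must therefore contain $H$; taking a minimum one, write $\ZZ = \{H\}\cup\ZZ_0$ with $|\ZZ_0| = \kappa(M)-1$. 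Since $H$ covers no non-basis other than itself (again $|X\cap H| = r$ forces $X = H$), every non-basis of $N$ is covered in $M$ by some $F\in\ZZ_0$ with $F\neq H$; replacing each such $F$ by $\cl_N(F)$ and using $|X\cap\cl_N(F)| \ge |X\cap F| > r_M(F) = r_N(F) = r_N(\cl_N(F))$ shows that $\{\cl_N(F): F\in\ZZ_0\}$ is a flat cover of $N$, whence $\kappa(N) \le \kappa(M)-1$.

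I expect the main obstacle to be precisely this last claim — pinning down that $H$ is the unique flat of $M$ covering the non-basis $H$, together with the companion observation that $H$ covers nothing else — since this is what forces the ``$+1$'' to be sharp; the remaining steps are routine manipulations with closures and the two elementary facts about how independent sets and ranks behave under relaxation, and the same reasoning also covers the degenerate case in which $N$ has no non-bases.
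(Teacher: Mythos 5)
Your proof is correct and rests on the same key observation as the paper's: $H$ is the only flat of $M$ covering the non-basis $H$, and $H$ covers no other non-basis, so flat covers of $M$ decompose as $\{H\}$ plus something that handles the remaining non-bases. The paper compresses this to the one-line claim that ``$\ZZ$ is a flat cover of $N$ if and only if $\ZZ\cup\{H\}$ is a flat cover of $M$,'' which tacitly treats the flats of $M$ and $N$ as interchangeable. You are more careful here, and justifiably so: relaxing $H$ genuinely changes the lattice of flats (for instance, each $(r-1)$-subset of the circuit $H$ becomes a flat of $N$ while it was not one of $M$), so one does need to pass through $\cl_M(\cdot)$ and $\cl_N(\cdot)$ and check that ranks are preserved, exactly as you do. Your version is longer but closes a small gap the paper leaves to the reader; the two arguments are otherwise the same.
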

\begin{proof}
	Suppose that $N$ is obtained from $M$ by relaxing the circuit-hyperplane $H$. As $H$ is the only flat covering $H$, a collection $\ZZ$ is a flat cover of $N$ if and only if $\ZZ\cup \{H\}$ is a flat cover of $M$.  The claim follows.
\end{proof}
It follows immediately that the cover complexity of any matroid is bounded from below by its number of circuit-hyperplanes, with equality if the matroid is sparse paving. This allows to give a lower bound on the cover complexity of spikes.
\subsubsection*{Spikes} Let $S = (G, \mathcal{D})$ be a set system on a ground set of size $n = |G|$, such that no two elements in $\mathcal{D}$ differ in exactly one element. 
There is a unique matroid $\Lambda(S)$ on ground set $\{a_i : i \in G\} \cup \{b_j : j \in G\}$ such that
\begin{enumerate}
	\item for each distinct $i, j \in G$, the set $\{a_i, b_i, a_j, b_j\}$ is both a circuit and a cocircuit, and
	\item for each $X \subseteq G$, the set $\{a_i : i \in X\} \cup \{b_j : j \in G\setminus X\}$ is dependent if and only if $X \in \mathcal{D}$.
\end{enumerate}
A matroid that is constructed in this way is called an {\em $n$-spike}. It is a matroid on $2n$ elements of rank $n$. The sets $\{a_i, b_i\}$ are called the {\em legs} of the spike, and for each $D \in \mathcal{D}$ the set $T(D) \defeq \{a_i : i \in D\} \cup \{b_j : j \in G\setminus D\}$ is called a {\em dependent transversal} of the legs. Our definition is taken from \cite{Geelen2008}.

There are many $n$-spikes, so in view of Lemma \ref{lemma:upperbound_by_kappa}, there must be $n$-spikes with large cover complexity. The following lemma exhibits such a spike.
\begin{lemma}
	Let $S = (G,\mathcal{D})$ be a set system with $G = [n]$ and $\mathcal{D} = \binom{[n]}{\lfloor n/2 \rfloor}$. Then $\kappa(\Lambda(S)) \ge \binom{n}{\lfloor n/2 \rfloor}$.
\end{lemma}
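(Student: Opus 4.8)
The plan is to exhibit many circuit-hyperplanes of $\Lambda(S)$ and invoke Lemma \ref{lemma:cc4}, or rather its stated consequence that $\kappa(M)$ is at least the number of circuit-hyperplanes of $M$. Recall that for each $D \in \mathcal{D}$, the dependent transversal $T(D) = \{a_i : i \in D\} \cup \{b_j : j \in G\setminus D\}$ is a dependent set of size $n = r(\Lambda(S))$, hence a non-basis. Since $\mathcal{D} = \binom{[n]}{\lfloor n/2\rfloor}$ has $\binom{n}{\lfloor n/2\rfloor}$ members, and distinct $D$ give distinct transversals $T(D)$, there are at least $\binom{n}{\lfloor n/2\rfloor}$ non-bases of this special form. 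So it would suffice to show that each $T(D)$ is in fact a circuit-hyperplane.

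First I would check that $T(D)$ is a circuit. It is dependent by property (2) of the spike construction (since $D \in \mathcal{D}$). To see it is minimally dependent, I would argue that no proper subset is dependent: a proper subset of $T(D)$ of size $n-1$ is obtained either by dropping some $a_i$ with $i \in D$ or some $b_j$ with $j \in G\setminus X$; in each case one gets an $(n-1)$-subset of a transversal, and I would show such sets are independent, using that the only small circuits are the legs' four-element sets $\{a_i,b_i,a_j,b_j\}$ (property (1)) — none of which is contained in a transversal, since a transversal meets each leg in exactly one element. Here the hypothesis that no two elements of $\mathcal{D}$ differ in exactly one element is relevant: it guarantees that $\Lambda(S)$ is well-defined and that the circuit structure is as simple as claimed. (This is also why the definition insists on that condition; with $\mathcal{D} = \binom{[n]}{\lfloor n/2\rfloor}$ it is satisfied since any two distinct $\lfloor n/2\rfloor$-sets differ in at least two elements.)

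Next I would check that $T(D)$ is a hyperplane, i.e.\ a flat of rank $n-1$. Since $T(D)$ is a circuit of size $n$, it has rank $n-1$. For closure, I must show that adding any element $e \notin T(D)$ to $T(D)$ yields a spanning set (rank $n$). Every element outside $T(D)$ is of the form $a_j$ with $j \in G\setminus D$ or $b_i$ with $i \in D$; adding such an element completes a leg $\{a_i, b_i\}$ for exactly one $i$, and I would argue that $T(D) \cup \{e\}$ then contains a basis, e.g.\ by exhibiting a spanning transversal-like set inside it or by a rank computation via the cocircuit/leg structure. Thus $\cl(T(D)) = T(D)$ and $T(D)$ is a circuit-hyperplane.

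Finally, combining: $\Lambda(S)$ has at least $\binom{n}{\lfloor n/2\rfloor}$ circuit-hyperplanes, so by the remark following Lemma \ref{lemma:cc4}, $\kappa(\Lambda(S)) \ge \binom{n}{\lfloor n/2\rfloor}$, as desired. I expect the main obstacle to be the verification that each $T(D)$ is genuinely a flat (the hyperplane/closure part), since this requires understanding precisely which supersets of a transversal are spanning, and hence a careful use of the defining circuit and cocircuit conditions (1) and (2) of the spike — in particular ruling out that some larger dependent transversal $T(D')$ could contain $T(D) \cup \{e\}$, which again is where the structure of $\mathcal{D}$ and the "no two sets differ in one element" condition must be used.
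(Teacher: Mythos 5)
Your proposal follows the same route as the paper: show each dependent transversal $T(D)$ is a circuit-hyperplane, count $\binom{n}{\lfloor n/2\rfloor}$ of them, and conclude via Lemma \ref{lemma:cc4}. The only difference is that the paper simply asserts (as a known property of spikes) that every dependent transversal is a circuit-hyperplane, whereas you sketch a verification of this fact from the defining conditions — a reasonable thing to do, though not needed for the paper's argument.
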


\begin{proof}
	As each dependent transversal is a circuit-hyperplane in $\Lambda(S)$, the number of circuit-hyperplanes in $\Lambda(S)$ is at least $|\mathcal{D}| = \binom{n}{\lfloor n/2 \rfloor}$. The lemma now follows from Lemma \ref{lemma:cc4}.
\end{proof}

Consider the dual programme of the LP \eqref{eq:cover_complexity_lp}:
\begin{equation}\label{eq:cover_complexity_dual}
	\begin{array}{rll}
		\text{max} & \sum\limits_{X \in \binom{E}{r} \setminus \BB} y_X & \\
		\text{s.t.} & \sum\limits_{X: |X \cap F| > r(F)} y_X \le 1 & \forall F \in \FF \\
		 & y_X \ge 0 & \forall X \in \binom{E}{r} \setminus \BB.
	\end{array}
\end{equation}
Write $\mu^*(M)$ for its optimum value. If we further restrict the dual variables to take values in $\Z$, then the new optimum value, call it $\mu(M)$, bounds $\mu^*(M)$ from below. By LP duality,
\begin{equation*}
	\mu(M) \le \mu^*(M) = \kappa^*(M) \le \kappa(M).
\end{equation*}
The binary version of the dual programme \eqref{eq:cover_complexity_dual} asks to find a subset of non-bases of maximum size, such that each flat covers at most one chosen non-basis. Thus, any such collection yields a lower bound for the cover complexity of $M$.
\begin{lemma}\label{lemma:dual}
	If $\mathcal{X}$ is a subset of non-bases of $M$ of maximum size, such that each flat covers at most one $X \in \mathcal{X}$, then $\kappa(M) \ge |\mathcal{X}|$.
\end{lemma}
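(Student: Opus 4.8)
The statement is essentially a restatement of weak LP duality for the pair \eqref{eq:cover_complexity_lp}--\eqref{eq:cover_complexity_dual}, specialised to an integral dual solution, so the plan is to exhibit the indicator vector of $\mathcal{X}$ as such a solution and then invoke the chain of inequalities $\mu(M) \le \mu^*(M) = \kappa^*(M) \le \kappa(M)$ already recorded above.

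Concretely, I would set $y_X \defeq 1$ for $X \in \mathcal{X}$ and $y_X \defeq 0$ for every non-basis $X \notin \mathcal{X}$. Then $y \in \Z_+^{\binom{E}{r}\setminus\BB}$, and for each flat $F \in \FF$ the constraint sum $\sum_{X : |X\cap F| > r(F)} y_X$ equals the number of members of $\mathcal{X}$ that $F$ covers, which is at most $1$ by the hypothesis on $\mathcal{X}$; hence $y$ is feasible for the integral version of \eqref{eq:cover_complexity_dual}. Its objective value is $\sum_X y_X = |\mathcal{X}|$, so by definition of $\mu(M)$ as the optimum of that integral programme we get $\mu(M) \ge |\mathcal{X}|$.

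Combining this with the inequalities $\mu(M) \le \mu^*(M) = \kappa^*(M) \le \kappa(M)$ stated just before the lemma (the middle equality being LP duality between \eqref{eq:cover_complexity_lp} and \eqref{eq:cover_complexity_dual}, and $\kappa^*(M)\le\kappa(M)$ coming from the fact that an integral flat cover gives a feasible primal point) yields $\kappa(M) \ge |\mathcal{X}|$, as required. Note that the ``maximum size'' hypothesis on $\mathcal{X}$ plays no role in the bound itself; it only makes the resulting lower bound as strong as possible.

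\textbf{Main obstacle.} There is essentially none beyond bookkeeping: the only thing to be careful about is matching the combinatorial phrase ``each flat covers at most one $X\in\mathcal{X}$'' with the algebraic constraint $\sum_{X:|X\cap F|>r(F)} y_X \le 1$, i.e.\ recalling from the definition of ``covers'' that $F$ covers $X$ precisely when $|X\cap F| > r_M(F)$. Once that identification is made, the proof is a one-line application of weak duality.
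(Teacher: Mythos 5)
Your proof is correct and is essentially the argument the paper intends: the paragraph preceding the lemma already sets up the chain $\mu(M) \le \mu^*(M) = \kappa^*(M) \le \kappa(M)$ and interprets the binary dual as exactly the combinatorial optimisation you solve with the indicator vector of $\mathcal{X}$, so the lemma is stated without a separate proof. Your observation that the ``maximum size'' hypothesis is not actually used for the inequality is also accurate.
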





\subsubsection*{Cycle matroids of complete graphs}

We consider $M(K_{r+1})$, the cycle matroid of the complete graph $K_{r+1}$. Write $V := V(K_{r+1})$ and for any $A \subseteq V$ let $\delta(A)$ for the set of edges having one end point in $A$.
\begin{lemma} Let $r>3$. Then the set $$\ZZ:=\{F\in \FF(M(K_{r+1})) \mid r(F)=r-1\}$$ is a flat cover of $M(K_{r+1})$. Moreover, $\kappa(M(K_{r+1})) = |\ZZ| = 2^r - 1$.
\end{lemma}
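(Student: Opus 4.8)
The plan is to prove the two halves of the statement separately: the easy inequality $\kappa(M(K_{r+1}))\le|\ZZ|=2^r-1$, and the matching lower bound $\kappa(M(K_{r+1}))\ge 2^r-1$.

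First I would recall the standard description of the flats of a graphic matroid: since every vertex subset of $K_{r+1}$ induces a connected graph, the flats of $M(K_{r+1})$ are in bijection with the partitions $\pi$ of $V$, the flat $F_\pi$ being the set of edges with both ends in one block, with $r(F_\pi)=(r+1)-|\pi|$. Hence $\ZZ$, the set of rank-$(r-1)$ flats, corresponds to the two-block partitions, $\ZZ=\{E\setminus\delta(A):\emptyset\ne A\subsetneq V\}$, so $|\ZZ|=(2^{r+1}-2)/2=2^r-1$. To see $\ZZ$ covers $M(K_{r+1})$ (hence $\kappa\le 2^r-1$): a non-basis $X$ has $r$ edges but is not a spanning tree, so $(V,X)$ is disconnected; taking a component $A$ of $(V,X)$, the flat $F_A:=E\setminus\delta(A)\in\ZZ$ contains $X$, so $|X\cap F_A|=r>r-1=r(F_A)$.

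For the lower bound I would invoke Lemma \ref{lemma:dual}: it suffices to exhibit $2^r-1$ non-bases, no two of which are covered by a common flat. For each cut fix a representative $A$ with $|A|\le|V\setminus A|$, and let $X_A$ be the disjoint union of a spanning tree of $K_{r+1}[A]$ and a Hamiltonian cycle of $K_{r+1}[V\setminus A]$; this is well defined since $|V\setminus A|\ge\lceil(r+1)/2\rceil\ge 3$ because $r>3$, and $X_A$ is a non-basis ($r$ edges, two components $A$ and $V\setminus A$). Distinct cuts give distinct component structures, so these are $2^r-1$ distinct non-bases. The crux is the claim that \emph{if $F_\pi$ covers $X_A$ then $V\setminus A$ is a block of $\pi$}. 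Granting it: if $F_\pi$ covered both $X_A$ and $X_B$ for two distinct cuts, then $V\setminus A$ and $V\setminus B$ would both be blocks of $\pi$, hence equal or disjoint; equality forces $A=B$, so $(V\setminus A)\cap(V\setminus B)=\emptyset$, i.e.\ $A\cup B=V$; but $|A\cup B|\le|A|+|B|\le 2\lfloor(r+1)/2\rfloor$, which is $r<r+1$ when $r+1$ is odd and forces $B=V\setminus A$ (the same cut) when $r+1$ is even — a contradiction. Thus no flat covers two members of the family and Lemma \ref{lemma:dual} yields $\kappa(M(K_{r+1}))\ge 2^r-1$, completing the proof together with the first half.

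The remaining work, and the step I expect to be the main obstacle, is the claim. Writing $\delta(\pi)$ for the edges joining distinct blocks, the covering condition $|X_A\cap F_\pi|>r(F_\pi)$ is equivalent to $|X_A\cap\delta(\pi)|\le|\pi|-2$. First I would show $\pi$ refines $\{A,V\setminus A\}$: the quotient graph $X_A/\pi$ (blocks as vertices, the edges of $X_A\cap\delta(\pi)$ as edges) then has fewer than $|\pi|-1$ edges, hence at least two components; but $X_A[A]$ and $X_A[V\setminus A]$ are connected, so all blocks meeting $A$ lie in one component of $X_A/\pi$ and all blocks meeting $V\setminus A$ lie in one component, and a block meeting both sets would merge these into one component, making $X_A/\pi$ connected — contradiction. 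Given that $\pi$ refines $\{A,V\setminus A\}$, the graph $X_A/\pi$ is the disjoint union of the quotients of the tree $X_A[A]$ and of the cycle $X_A[V\setminus A]$, each connected, so it has exactly two components, and $|X_A\cap\delta(\pi)|=|\pi|-2$ forces both quotients to have exactly (number of vertices minus one) edges, i.e.\ to be trees. But contracting the vertex classes of a cycle on $\ge 3$ vertices by any partition into $\ge 2$ blocks always yields a connected graph with at least as many edges as vertices (a short counting argument: the crossing edges cut the cycle into arcs, and there are at least as many arcs as blocks), hence never a tree; so the partition of $V\setminus A$ induced by $\pi$ is trivial, i.e.\ $V\setminus A\in\pi$. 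This is precisely where $r>3$ (via $|V\setminus A|\ge 3$) is essential; for $M(K_4)$ the conclusion fails, consistent with $\kappa(M(K_4))=4<7$.
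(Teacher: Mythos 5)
Your proof is correct and follows essentially the same approach as the paper's: you cover with hyperplanes (one per bipartition of $V$), and for the lower bound you invoke Lemma~\ref{lemma:dual} with the same family of non-bases (a spanning cycle on the larger side of each cut joined to a spanning tree on the smaller side). The one substantive difference is that you supply a careful quotient-graph argument for the key claim that a covering flat $F_\pi$ must have the larger side as a block of $\pi$, whereas the paper merely asserts this ("Note that\dots") without proof; your argument fills that gap correctly, and your deduction from the claim (two blocks are equal or disjoint, and disjointness is ruled out by a size count) is a minor variant of the paper's containment argument $U_1\subseteq U_2\subseteq U_1$.
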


\begin{proof}
	We first show that $\ZZ$, the set of hyperplanes of $M(K_{r+1})$, is indeed a flat cover of $M(K_{r+1})$. 
	Let $X$ be any non-basis of $M(K_{r+1})$. Then $X$ is not spanning, and hence there is a 
	hyperplane  $H\in \ZZ$ such that $X\subseteq H$. Then $H$ covers $X$, as $|X|=r$ and $r(H)=r-1$.
	The size of $\ZZ$ equals the number of hyperplanes of $M(K_{r+1})$, which is $2^{r}-1$.
		
	In order to show that $\ZZ$ is of minimum size, we apply Lemma \ref{lemma:dual}. 
	It suffices to find a collection $\mathcal{X}$ containing $2^r-1$ non-bases, such that no flat covers more than one of the non-bases in $\mathcal{X}$.
	For each  bipartition $\{U,W\}$ of $V$ such that neither $U$ nor $W$ is empty, we construct a non-basis $X_{\{U,W\}}$ as follows. 
	Without loss of generality, we may assume that $|U|\geq |W|$. Then $|U|\geq 3$, and we may pick a circuit  $C_U$ spanning $U$ and a tree $T_W$ spanning $W$. 
	Then we put $X_{\{U,W\}}:=C_U\cup T_W$. This yields a collection of $2^r-1$ non-bases $\mathcal{X}$, and we will argue that no flat $F$ covers more than one of these $X_{\{U,W\}}$. 
	
	Note that if a flat $F$ covers $X_{\{U,W\}}$, then $F$ must contain $C_U$ and be disjoint from $\delta(V(C))$. 
	So suppose for a contradiction that some flat $F$ covers both $X_{\{U_1,W_1\}}$ and $X_{\{U_2,W_2\}}$. As each $U_i$ contains at least half of the vertices in $V$, we have  $|U_1|+|U_2|\geq |V|$. 
	Since $U_1\neq W_2$, we have $U_1\cap U_2\neq \emptyset$. Since $F$ is disjoint from $\delta(V(C_1)$, and $C_2\subseteq F$, we have $U_2\subseteq U_1$, and vice versa we have $U_1\subseteq U_2$.
	So $U_1=U_2$, a contradiction.
	\end{proof}
The cover complexity of graphic matroids seems to be quite high, considering that any such matroid $M$ admits a very compact description: a graph, which takes no more than $\binom{r_M}{2}$ bits to describe. For cycle matroids of complete graphs, a minimal flat cover is far from being the most compressed description possible.

\section{\label{sec:small}Asymptotically small minor-closed classes}

The goal of this section is to put the machinery developed in Section \ref{sec:cover} to use in proving Theorem \ref{thm:asymptotic_small}. In fact, we will prove the following theorem, to which Theorem \ref{thm:asymptotic_small} is a corollary.

\begin{theorem}\label{thm:small_excluded} If $N=U_{2,k}$ for some $k \ge 2$, or if $N$ is one of $ U_{3,6}$, $P_6$, $Q_6$ or $R_6$, then
 $$\log |\Ex{N} \cap \MM_n| \le O\left(\frac{2^n}{n^{5/2}} \log^2(n)\right).$$\end{theorem}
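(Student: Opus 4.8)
The plan is to apply Theorem \ref{theorem:count} to the class $\Mcal = \Ex{N}$. Since $\Ex{N}$ is closed under both deletion and contraction (minor-closed), it is in particular contraction-closed, so Theorem \ref{theorem:count} applies once we establish a bound of the form $\max\{\kappa(M) : M \in \Ex{N} \cap \MM_{n,s}\} \le c n^{s-1-\epsilon}$ for some fixed small rank $s$, with $\epsilon$ strictly positive. To get the stated exponent $5/2$ rather than the generic $3/2$, I need $\epsilon \ge 1$; concretely it suffices to find, for each $N$ in the list, a rank $s$ and a constant $c$ so that every $M \in \Ex{N}$ of rank $s$ satisfies $\kappa(M) \le c n^{s-2}$ (i.e.\ $\epsilon = 1$), and then Theorem \ref{theorem:count} yields $\log|\Ex{N}\cap\MM_n| \le O(n^{-5/2}2^n\log^2 n)$ directly.

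The natural choice is $s = 3$, so the target is $\kappa(M) = O(n)$ for every rank-$3$ matroid $M$ on $[n]$ avoiding $N$; equivalently, using Lemma \ref{lemma:kappa_rounding} and Lemma \ref{lemma:upperbound_by_kappa*} it would even suffice to bound the fractional cover complexity $\kappa^*(M) = O(n)$ up to logarithmic factors, but I expect the integral bound to come out cleanly. In rank $3$ the flats are the empty set, the points (rank-$1$ flats), the lines (rank-$2$ flats), and the whole ground set; a non-basis is a dependent $3$-set, which is covered either by a point containing two of its elements or by a line containing all three. So the task reduces to covering all dependent triples of a rank-$3$ matroid by few points and lines. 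For $N = U_{2,k}$: excluding $U_{2,k}$ as a minor means no line (in any minor) has $k$ or more points, which forces every line of $M$ itself to span at most $k-1$ parallel classes; one then shows the long lines and large parallel classes can all be gathered into $O(n)$ flats covering every dependent triple — roughly, each element lies on boundedly many "rich" lines, and the remaining dependent triples come from parallel pairs, covered by the (at most $n$) points. For $N \in \{U_{3,6}, P_6, Q_6, R_6\}$, which are all rank-$3$ sparse paving matroids on $6$ elements, excluding $N$ is a restriction on the simplification of $M$: it limits how the lines of $M$ can interact, and a Ramsey/extremal-type argument should show that a rank-$3$ matroid with too many "independent-looking" configurations of lines must contain one of these as a minor. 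I would handle the uniform case $U_{2,k}$ first as the cleanest, then treat the four six-element matroids, likely via a common lemma about the line structure of $N$-free rank-$3$ matroids.

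The main obstacle will be the rank-$3$ structural analysis for the four sparse paving matroids $U_{3,6}, P_6, Q_6, R_6$: one must show that avoiding such an $N$ as a \emph{minor} (not merely as a restriction) of a rank-$3$ matroid forces the dependent triples to be coverable by $O(n)$ flats. Contractions of a rank-$3$ matroid land in rank $2$ (a bunch of parallel classes, i.e.\ essentially $U_{2,k}$'s) and deletions stay in rank $3$, so an $N$-minor of a rank-$3$ matroid is either a restriction of $M$ or forces a $U_{2,k}$-type obstruction after contracting a point — this dichotomy is what makes $s=3$ tractable, and pinning down, for each of the four matroids, exactly which line-configurations it forbids (and checking these forbidden configurations are few enough) is where the real work lies. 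The $U_{2,k}$ step feeds into all the others because each of $U_{3,6}, P_6, Q_6, R_6$ has $U_{2,k}$-minors for small $k$ only in a controlled way, so a bound on the number of long lines is available throughout. Once the rank-$3$ bound $\kappa(M) = O(n) = O(n^{s-2})$ with $s=3$ is in hand, the rest is a black-box invocation of Theorem \ref{theorem:count} with $\epsilon = 1$.
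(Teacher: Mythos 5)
Your plan is essentially the paper's: apply Theorem \ref{theorem:count} to $\Mcal = \Ex{N}$ with $\epsilon = 1$, reducing the problem to a structural bound on $\kappa(M)$ for matroids of fixed small rank in $\Ex{N}$, and then arguing by analysing the lines of the simplification that $\kappa$ is small enough. The one small divergence is your choice to use $s = 3$ uniformly, including for $N = U_{2,k}$: the paper instead takes $s = r(N)$, so $s = 2$ for $U_{2,k}$, where Lemma \ref{lemma:u2k_rank2} gives the constant bound $\kappa(M) \le k$ directly from the parallel-class decomposition of a rank-$2$ matroid. Your $s = 3$ variant does work (a rank-$3$ matroid with no $U_{2,k}$-minor has a simplification of bounded size by Kung's theorem, hence $O(1)$ long lines and $O(1)$ nontrivial parallel classes, so in fact $\kappa(M) = O(1)$, stronger than the $O(n)$ you ask for), but it invokes a bound on the simplification that the paper does not need at $s=2$. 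Be aware that the portion of your proposal you label as ``where the real work lies'' --- showing that a rank-$3$ matroid with no $U_{3,6}$-, $P_6$-, $Q_6$-, or $R_6$-minor has $O(n)$ long lines in its simplification --- is in fact the majority of Section \ref{sec:small} (Lemmas \ref{lemma:long_lines} through \ref{lemma:P6_rank3}), carried out by a separate ad hoc geometric argument for each $N$ rather than a common Ramsey-type lemma; so while your scaffolding is correct, the proposal leaves most of the actual content unproved.
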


\begin{proof}[Proof of Theorem \ref{thm:asymptotic_small} from Theorem \ref{thm:small_excluded}]
	We have
	$$\log(\lim_{n \to \infty} \frac{|Ex(N) \cap \MM_n|}{m_n})=\lim_{n \to \infty} (\log |Ex(N) \cap \MM_n|- \log m_n)= -\infty$$
	by Theorem \ref{thm:small_excluded} and the lower bound on $m_n$ due to Knuth (Theorem \ref{theorem:knuth}).
\end{proof}

To prove Theorem \ref{thm:small_excluded}, we will apply Theorem \ref{theorem:count} with $\epsilon=1$ and $s=r(N)$ for each $N$. 
So it will suffice to bound the cover complexity of matroids of rank $r(N)$ in each class $\Ex{N}$, and give a constant upper bound for $N=U_{2,4}$ and a linear bound for the cases $N=U_{3,6}, P_6, Q_6, R_6$. We prove these bounds in the remainder of this section.

First, we consider $U_{2,k}$.
\begin{lemma}\label{lemma:u2k_rank2}
	Let $M \in \MM_{n,2}$. If $M \not\succcurlyeq U_{2,k}$, then $\kappa(M) \le k$.
\end{lemma}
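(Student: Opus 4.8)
The statement claims that for a rank-2 matroid $M$ on $[n]$ with no $U_{2,k}$-minor, we have $\kappa(M) \le k$. I would begin by recalling the structure of rank-2 matroids. In a matroid $M$ of rank $2$, the flats are: the closure of the empty set (the set $L_0$ of loops), the rank-1 flats (the parallel classes, i.e.\ the points), and the whole ground set. The non-bases are precisely the $2$-subsets $X$ with $r_M(X) \le 1$, which means either $X$ meets $L_0$ or $X$ is contained in a single parallel class. So a flat cover must "explain" every such pair.

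The key observation is that $U_{2,k}$ is a minor of a rank-2 matroid $M$ essentially when $M$ has at least $k$ distinct points after discarding loops; more precisely, contracting nothing and deleting down to one element per parallel class (and deleting loops) gives $U_{2,j}$ where $j$ is the number of parallel classes, so $M \not\succcurlyeq U_{2,k}$ forces the number of rank-1 flats (parallel classes) to be at most $k-1$. I would prove this reduction carefully: if $M$ has $\ge k$ parallel classes, pick one element from $k$ of them and delete everything else, noting loops also get deleted, to obtain $U_{2,k}$ as a restriction (hence minor).

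Given that $M$ has at most $k-1$ parallel classes, I would exhibit an explicit flat cover of size at most $k$. Take $\ZZ$ to consist of the rank-1 flats together with the flat $\cl_M(\emptyset) = L_0$. Every non-basis $X$ of rank $\le 1$ either is a pair inside one parallel class $P$, which is then covered by $P$ since $|X \cap P| = 2 > 1 = r_M(P)$, or contains a loop $e \in L_0$, in which case $L_0$ covers $X$ because $|X \cap L_0| \ge 1 > 0 = r_M(L_0)$. (If $L_0 = \emptyset$ it is not a flat, but then there are no loops and the rank-1 flats alone suffice.) Since there are at most $k-1$ parallel classes and one flat $L_0$, this gives $|\ZZ| \le k$, establishing $\kappa(M) \le k$. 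A minor subtlety to handle: if $M$ has rank less than $2$ in some degenerate edge case, or $n < 2$, but these are either vacuous or trivially covered.

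The main obstacle — really the only nontrivial point — is pinning down exactly the relationship between $U_{2,k}$-minors and the number of parallel classes, i.e.\ showing that excluding $U_{2,k}$ as a minor of a rank-$2$ matroid is equivalent to having at most $k-1$ points. Contraction in a rank-2 matroid of a non-loop element drops the rank to $1$, so no $U_{2,k}$ (which has rank $2$) arises via any contraction; thus a $U_{2,k}$-minor of $M$ must in fact be a restriction $M|_Y$ for some $Y \subseteq [n]$, and $M|_Y \cong U_{2,k}$ precisely when $Y$ contains no loop and meets at least $k$ distinct parallel classes in exactly one element each — which is possible iff $M$ has at least $k$ parallel classes. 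Once this is established the counting of the explicit cover is routine.
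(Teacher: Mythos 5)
Your proof is correct and follows essentially the same route as the paper: decompose the rank-2 matroid into the set of loops together with parallel classes, observe that the rank-1 flats plus $\cl_M(\emptyset)$ form a flat cover, and note that a $U_{2,k}$-restriction forces at least $k$ parallel classes. (One small slip in your parenthetical: $\cl_M(\emptyset)=\emptyset$ is still a flat when $M$ is loopless, so there is no need to treat that case separately; the cover and the count are unaffected.)
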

\begin{proof}
	As $M$ has rank 2, we can write $E(M) = E_0 \dot\cup E_1 \dot\cup \ldots \dot\cup E_m$, such that
	\begin{enumerate}
		\item $E_i \neq \emptyset$ for all $i > 0$, and
		\item $\{a,b\}$ is a basis of $M$ if and only if $a \in E_i$ and $b \in E_j$ for distinct $i, j > 0$.
	\end{enumerate}
	Then $r(E_0)=0$ and $r(E_0\cup E_1)=1$ for each $i$. Hence, $\ZZ:=\{E_0\}\cup\{E_0\cup E_i\mid i=1,\ldots, m\}$ is a flat cover of $M$, and so $\kappa(M) \le m+1$. Picking an element $e_i \in E_i$ for each $i = 1, 2, \ldots, m$, we find that the restriction of $M$ to $\{e_1, e_2, \ldots, e_m\}$ is a $U_{2,m}$-minor of $M$, so $m \le k-1$. We conclude that $\kappa(M) \le m+1 \le k$.
\end{proof}
For the case $N=U_{3,6}$, $P_6$, $Q_6$ or $R_6$, we bound the cover complexity of matroids of rank 3 in $\Ex{N}$. 
We will first argue that it is enough to bound the number of long lines in simple matroids of rank 3. Here, a {\em long line} is a flat of rank 1 containing at least 3 elements. The simplification of $M$ is denoted as $\si{M}$.

\begin{lemma}\label{lemma:long_lines} Let $M\in \MM_{n,3}$ be such that $\si{M}$ has  $L$ long lines. Then $\kappa(M)\leq 1+n/2+L$.\end{lemma}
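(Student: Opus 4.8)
The plan is to build a flat cover of $M$ directly, handling the three sources of non-bases in a rank-3 matroid separately: non-bases that fail to be spanning because they live in a rank-$\le 2$ flat, non-bases whose elements are not simple (i.e. involve loops or parallel pairs), and non-bases consisting of three distinct points that happen to be collinear. The key observation is that a flat cover of $\si{M}$ lifts to a flat cover of $M$ of the same size via $F\mapsto \cl_M(F)$, so it essentially suffices to cover $\si{M}$; the extra terms in $1+n/2+L$ will absorb the cost of dealing with loops and parallel classes.

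Concretely, first I would write $E(M)=E_0\dot\cup E_1\dot\cup\cdots\dot\cup E_p$ where $E_0$ is the set of loops, $E_1,\dots,E_p$ are the rank-1 flats (parallel classes) restricted to the non-loops, and pick one representative $e_i\in E_i$ so that $\si{M}$ is (isomorphic to) $M|\{e_1,\dots,e_p\}$. Every non-basis $X$ of $M$ (a dependent 3-set) falls into one of the cases: (i) $X$ meets $E_0$; (ii) $X$ contains two elements from the same $E_i$; (iii) $X$ consists of three non-loop elements from distinct parallel classes that are collinear in $\si{M}$; (iv) $X$ contains one non-loop element outside any long line together with a parallel pair or a loop — but this is subsumed in (i)/(ii). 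So I need flats to cover (i), (ii), and (iii). For (iii), a flat cover $\ZZ_0$ of $\si{M}$ has size at most... well, I should argue $\kappa(\si{M})\le 1+L$: the rank-2 flats of $\si{M}$ that are long lines number $L$, plus one more flat (the empty flat, of rank 0) covers any non-basis that is contained in a rank-$\le1$ flat of $\si M$; actually in a simple rank-3 matroid every dependent 3-set is a long line, so $L$ flats suffice and we may even drop the $+1$ here, pushing it to the loop/parallel bookkeeping. Lifting $\ZZ_0$ through $\cl_M$ gives $L$ flats of $M$ covering all non-bases of type (iii).

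For (i) and (ii): the single flat $\cl_M(\emptyset)=E_0$ covers every 3-set meeting $E_0$ in at least... no, it covers $X$ only if $|X\cap E_0|>0=r(E_0)$, i.e. $|X\cap E_0|\ge 1$, which handles all of (i). For (ii), a 3-set $X$ with two elements in a common parallel class $E_i$ is covered by the rank-1 flat $\cl_M(E_i)=E_0\cup E_i$ precisely when $|X\cap(E_0\cup E_i)|\ge 2$. So I would add the flats $\{E_0\cup E_i : |E_i|\ge 2\}$; there are at most $n/2$ parallel classes of size $\ge 2$ (since each uses at least two of the $n$ elements, disjointly), giving the $n/2$ term. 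Collecting: $\ZZ:=\{E_0\}\cup\{E_0\cup E_i:|E_i|\ge2\}\cup\{\cl_M(F):F\in\ZZ_0\}$ has size at most $1+n/2+L$ and covers every non-basis, so $\kappa(M)\le 1+n/2+L$.

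The main obstacle, and the part needing care rather than cleverness, is the bookkeeping of which flat covers which type of non-basis — in particular verifying the covering inequality $|X\cap F|>r_M(F)$ in each case (e.g.\ that a lifted long-line flat $\cl_M(L_j)$, which may absorb loops and grow its parallel classes, still has rank $2$ and still contains all three points of a collinear triple together with no spurious elements that would matter), and making sure the "mixed" non-bases (one generic point plus a parallel pair, or two generic points plus a loop) are genuinely caught by $E_0$ or by some $E_0\cup E_i$ rather than slipping through. I expect no analytic difficulty; the estimate $p'\le n/2$ for the number of parallel classes of size at least two is immediate from disjointness, and the lift $F\mapsto\cl_M(F)$ preserving covers is exactly the mechanism already used in Lemma~\ref{lemma:cc3} and Lemma~\ref{lemma:cc2}.
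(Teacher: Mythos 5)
Your proof is correct and is essentially the same as the paper's: the paper also takes $\ZZ = \{\cl_M(\emptyset)\} \cup \{F : r_M(F)=1,\ |F|>1\} \cup \{F : r_M(F)=2,\ F \text{ contains} \geq 3 \text{ rank-1 flats}\}$ and observes that every dependent $3$-set either meets the loops, contains a parallel pair, or lies on a long line, with the three parts of $\ZZ$ contributing $1$, at most $n/2$, and $L$ flats respectively. Your version is a touch more careful in restricting the rank-1 flats to those with $|E_i|\ge 2$ (which is what actually gives the $n/2$ count when loops are present), but the decomposition and the counting are identical.
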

\proof Consider the collection of flats $\ZZ \defeq \ZZ_0\cup\ZZ_1\cup\ZZ_2$ of $M$, where $\ZZ_0:=\{\cl_M(\emptyset)\}$, 
$\ZZ_1:=\{F\in\FF(M)\mid r_M(F)=1, |F|>1\}$, and 
$\ZZ_2:=\{F\in\FF(M)\mid r_M(F)=2, F\text{ contains at least $3$ flats of rank  }1\}.$
Then $\ZZ$ is a flat cover of $M$. For if $X$ is a non-basis of $M$, then either $X$ contains a loop, in which case $X$ is covered by $\ZZ_0$, or it contains a pair of parallel elements, so that $X$ is covered by $\ZZ_1$, and in the remaining case $X$ is covered by $\cl(X)\in\ZZ_2$.  Clearly $|\ZZ_0|=1, |\ZZ_1|\leq n/2, |\ZZ_2|=L$, and so $|\ZZ|\leq 1+n/2+L$.  \endproof 

\begin{lemma}\label{lemma:exclude_uniform_3}
	Let $M$ be a simple matroid of rank 3 with a $U_{3,k}$-minor, but without a $U_{3,k+1}$-minor. If $|E(M)|>\left(\binom{k-1}{2} - 1\right)\binom{k}{2} + k$, then $M$ has a line $\ell$ so that $M \setminus \ell$ has no $U_{3,k-1}$-minor.
	
\ignore{	
	Then one of the following holds:
	\begin{enumerate}[(i)]
		\begin{item}
			$M$ has a line $\ell$ so that $M \setminus \ell$ has no $U_{3,k-1}$-minor.
		\end{item}
		\begin{item}
			$M$ has at most $\left(\binom{k-1}{2} - 1\right)\binom{k}{2} + k$ elements.
		\end{item}
	\end{enumerate}
	}
\end{lemma}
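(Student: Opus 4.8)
The plan is to find a line $\ell$ of $M$ that meets every $U_{3,k-1}$-restriction, so that deleting it destroys all such restrictions; if no single $U_{3,k-1}$-minor existed we would already be done, so assume one does. First I would fix a $U_{3,k}$-restriction $R$ of $M$ on a set $K$ of $k$ points (we may work with restrictions rather than general minors here, since in a simple rank-$3$ matroid a $U_{3,j}$-minor is realized by $j$ points in general position, i.e.\ a $U_{3,j}$-restriction). The key structural observation I would exploit: because $M$ has no $U_{3,k+1}$-minor, every point $e$ of $M$ outside $K$ lies on a line through two of the $k$ points of $K$ — otherwise $K\cup\{e\}$ would be $k+1$ points in general position. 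Thus the $\binom{k}{2}$ lines spanned by pairs from $K$ cover $E(M)$. Since $|E(M)| > \left(\binom{k-1}{2}-1\right)\binom{k}{2}+k$, by pigeonhole one of these lines, call it $\ell$, carries more than $\binom{k-1}{2}-1$ points of $E(M)\setminus K$, hence at least $\binom{k-1}{2}$ points off $K$, hence (counting the two points of $K$ on it) at least $\binom{k-1}{2}+2$ points; in particular $\ell$ is very long.

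Next I would argue that this long line $\ell$ meets every $U_{3,k-1}$-restriction of $M$. Suppose $S$ is a set of $k-1$ points in general position with $S\cap\ell=\emptyset$. Each point of $\ell$, together with $S$, would have to fail to be in general position (else we get $U_{3,k}$ disjoint-ish from... — more precisely, $S\cup\{x\}$ must not be $k$ points in general position only if forced, so I need to be careful here). The cleaner route: every point $x$ of $\ell$ lies on one of the $\binom{k-1}{2}$ lines spanned by pairs of $S$, by the same no-$U_{3,k}$-type argument applied to $S$ — wait, $S$ has only $k-1$ points, so $S\cup\{x\}$ being in general position gives only $U_{3,k}$, which \emph{is} allowed. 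So instead I use no-$U_{3,k+1}$: I'd pick $x,x'$ distinct on $\ell$; if $S\cup\{x\}$ and then $S\cup\{x,x'\}$ were in general position we would get $U_{3,k+1}$ (note $x,x'$ span $\ell$, which is skew to enough of $S$). Making this precise, at most $\binom{k-1}{2}$ points of $\ell$ can fail to be "absorbed" by lines through pairs of $S$, and each such line meets $\ell$ in at most one point; since $\ell$ has more than $\binom{k-1}{2}$ points off the relevant configuration, some point $x\in\ell$ yields $S\cup\{x\}$ in general position, and then a second such point $x'$ yields $S\cup\{x,x'\}$ — a $U_{3,k+1}$-restriction, contradiction. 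Hence every $U_{3,k-1}$-restriction meets $\ell$, so $M\setminus\ell$ has no $U_{3,k-1}$-minor.

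The main obstacle I anticipate is getting the counting in the second step exactly right: I must track how many points of $\ell$ could possibly be "blocked" by the $\binom{k-1}{2}$ lines spanned by pairs of a hypothetical disjoint $U_{3,k-1}$-restriction $S$, and confirm that the bound $|\ell\setminus K|\ge\binom{k-1}{2}$ obtained from the pigeonhole step in the first part is genuinely enough to force two "free" points on $\ell$ and hence a forbidden $U_{3,k+1}$. I would also double-check the edge interaction between the fixed reference set $K$ used to produce $\ell$ and the arbitrary set $S$ used in the blocking argument — ideally the argument for step two should not refer to $K$ at all, only to the length of $\ell$, which is why I want the length bound $|\ell|\ge\binom{k-1}{2}+2$ stated cleanly before starting step two. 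Once the arithmetic $\left(\binom{k-1}{2}-1\right)\binom{k}{2}+k$ is matched to "line longer than $\binom{k-1}{2}-1$ off $K$", the rest is a short general-position argument.
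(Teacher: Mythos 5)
Your proposal follows essentially the same two-step argument as the paper: fix a $U_{3,k}$-restriction $X$, use the absence of a $U_{3,k+1}$-minor to show the $\binom{k}{2}$ lines through pairs of $X$ cover $E(M)$, pigeonhole to find a line $\ell$ with at least $\binom{k-1}{2}+2$ points, and then show any $U_{3,k-1}$-restriction $S$ disjoint from $\ell$ would leave two ``free'' points $u,v\in\ell$ (since the $\binom{k-1}{2}$ lines through pairs of $S$ each meet $\ell$ in at most one point), giving $S\cup\{u,v\}\cong U_{3,k+1}$, a contradiction. The only slip is a sign inversion in the phrase ``at most $\binom{k-1}{2}$ points of $\ell$ can fail to be absorbed'' --- you mean those points can \emph{be} absorbed --- but your subsequent clauses show you have the counting right.
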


\begin{proof}
	Suppose that $X \subseteq E(M)$ is such that $M$ restricted to $X$ is isomorphic to $U_{3,k}$. It follows that each element $e \in E(M)$ is on some flat spanned by a pair of elements in $X$. If each line $\ell$ spanned by 2 elements from $X$ contains at most $\binom{k-1}{2} + 1$ elements, then 
$M$ has at most $\left(\binom{k-1}{2} - 1\right)\binom{k}{2} + k$ elements.	
	
	So let $\ell$ be a line spanned by $x,y\in X$ with at least $\binom{k-1}{2} + 2$ elements. Then $M\setminus\ell$ cannot contain a $U_{3,k-1}$-minor. For if $M\setminus\ell$ does contain a $U_{3,k-1}$-minor, obtained by restricting to $X'$, then there exist 2 elements $u,v \in \ell$ that are not contained in any line spanned by two elements from $X'$. It follows that $M$ restricted to $X' \cup \{u, v\}$ is isomorphic to $U_{3,k+1}$. \end{proof}

\ignore{The following corollary lists some of the consequences of Lemma \ref{lemma:exclude_uniform_3}.
\begin{corollary}\label{crl:exclude_uniform_3}
	Let $M \in \MM_{n,3}$ be a simple matroid.
	\begin{enumerate}[(i)]
		\item If $M \not\succcurlyeq U_{3,4}$, then $M$ contains a line $\ell$ such that $r(M\setminus \ell) \le 1$.
		\item If $M \not\succcurlyeq U_{3,5}$ and $n > 16$, then $M$ contains a line $\ell$ such that $r(M \setminus \ell) \le 2$.
		\item If $M \not\succcurlyeq U_{3,6}$ and $n > 55$, then $M$ contains a line $\ell$ such that $M \setminus \ell \not\succcurlyeq U_{3,4}$.
		\item If $M \not\succcurlyeq U_{3,7}$ and $n > 141$, then $M$ contains a line $\ell$ such that $M \setminus \ell \not\succcurlyeq U_{3,5}$.
	\end{enumerate}
\end{corollary}
}
We now settle the case $N=U_{3,6}$.
\begin{lemma} \label{lemma:u36_rank3}
	Let $M \in \MM_{n,3}$. If $M \not\succcurlyeq U_{3,6}$, then $\kappa(M) \le 496+n$.
\end{lemma}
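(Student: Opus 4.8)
The plan is to reduce the bound on $\kappa(M)$ to a bound on the number of long lines of $\si{M}$ via Lemma~\ref{lemma:long_lines}, and then to bound that number of long lines using Lemma~\ref{lemma:exclude_uniform_3}. First I would observe that $\kappa$ is invariant under simplification (loops and parallel classes contribute a controlled amount already accounted for in Lemma~\ref{lemma:long_lines}), so it suffices to bound $L$, the number of long lines of $\si{M}$, under the hypothesis $M \not\succcurlyeq U_{3,6}$. By Lemma~\ref{lemma:long_lines} we then get $\kappa(M) \le 1 + n/2 + L$, so a constant bound $L \le C$ would even give $\kappa(M) \le 1 + n/2 + C$, and since $496 + n \ge 1 + n/2 + C$ once $C$ and $n$ are in the right range, the claimed inequality follows (with the constant $496$ presumably chosen to absorb the small-$n$ cases as well).

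Next I would bound $L$. Since $M \not\succcurlyeq U_{3,6}$, the simple matroid $\si M$ either has a $U_{3,5}$-minor or not; in either case it certainly does not have a $U_{3,6}$-minor. Apply Lemma~\ref{lemma:exclude_uniform_3} with $k=5$: if $|E(\si M)| > \left(\binom{4}{2}-1\right)\binom{5}{2} + 5 = 5\cdot 10 + 5 = 55$, then $\si M$ has a line $\ell$ with $\si M \setminus \ell \not\succcurlyeq U_{3,4}$. Now iterate: apply Lemma~\ref{lemma:exclude_uniform_3} to $\si M \setminus \ell$ with $k=4$ — if it still has more than $\left(\binom{3}{2}-1\right)\binom{4}{2}+4 = 2\cdot 6 + 4 = 16$ elements, peel off another line; after that step the matroid has no $U_{3,3}$-minor, i.e.\ rank at most $2$, which in a simple matroid of rank $3$ is impossible unless what remains is a single line or smaller. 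Tracking the number of long lines that can be destroyed or created at each peeling step (each removed line is one long line, and deleting it can affect at most the lines through its points) gives a constant bound on $L$; working out the arithmetic should land at $L \le 496 - 1 - (\text{the } n/2 \text{ being separate})$, i.e.\ the constant is engineered so that $1 + n/2 + L \le 496 + n$ for all $n \ge 1$.

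The main obstacle I anticipate is getting the bookkeeping on long lines exactly right: Lemma~\ref{lemma:exclude_uniform_3} is stated as an existence statement about a single deletable line, so to bound the \emph{total} number of long lines I need to argue that once the matroid is small (at most $55$ elements), it has boundedly many lines at all — at most $\binom{55}{2}$, say — and for the peeling steps I need that removing a line does not create too many new long lines and that the process terminates after a constant number of steps. An alternative cleaner route, which I would try first, is purely extremal: bound $L$ directly by noting that a simple rank-$3$ matroid with many long lines must contain $U_{3,6}$ as a minor (one can restrict to one element on each of six lines in general position), so the number of long lines is at most some absolute constant; Lemma~\ref{lemma:exclude_uniform_3} with $k \le 5$ is precisely the tool that makes this constant explicit. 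Either way, once $L$ is bounded by a constant, the statement $\kappa(M) \le 496 + n$ follows immediately from Lemma~\ref{lemma:long_lines}, with the slack in the constants covering the finitely many small cases.
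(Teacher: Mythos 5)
Your outline starts the same way the paper does — reduce to bounding the number $L$ of long lines of $\si M$ via Lemma~\ref{lemma:long_lines}, and use Lemma~\ref{lemma:exclude_uniform_3} (applied twice) to control the structure of $\si M$ — and the constants $55$ and $495 = \binom{55}{2}/3$ are indeed where the $496$ comes from in the small case. But there is a genuine gap in the heart of the argument: your claim that $L$ is bounded by an \emph{absolute constant} is false, and the ``alternative cleaner route'' you say you would try first fails for the same reason. A simple rank-$3$ matroid with no $U_{3,6}$-minor can have $\Theta(n)$ long lines. Concretely, take two disjoint long lines $\ell$ and $\ell'$ of roughly equal size, one extra point $e$, and for a perfect matching $\{(f_i,g_i)\}$ between $\ell$ and $\ell'$ declare each $\{e,f_i,g_i\}$ a $3$-point line: this is a simple rank-$3$ matroid, no $6$ of its points are in general position (any $6$ points include $3$ from $\ell\cup\{e\}$ or $3$ from $\ell'\cup\{e\}$ after suitable bookkeeping, and already $5$ is tight), yet it has about $n/2$ long lines through $e$ in addition to $\ell$ and $\ell'$. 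So you cannot hope to prove $L\le C$; you have to prove something like $L\le 2 + n/2$ in the large case, which the paper does by showing (after the two peelings) that $E(\si M)$ is $\ell\cup\ell'$ or $\ell\cup\ell'\cup\{e\}$, and then counting the extra long lines through $e$ by observing each is determined by its intersection with $\ell$ (giving $\le\min\{|\ell|,|\ell'|\}\le (n-1)/2$). This linear term is then absorbed because Lemma~\ref{lemma:long_lines} already gives $\kappa(M)\le 1 + n/2 + L$, and $1 + n/2 + (2 + n/2) \le 3 + n \le 496 + n$.

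A secondary issue: after the first application of Lemma~\ref{lemma:exclude_uniform_3} you obtain a line $\ell$ with $\si M\setminus\ell\not\succcurlyeq U_{3,4}$, so you cannot then apply the lemma to $\si M\setminus\ell$ with $k=4$ — its hypothesis requires a $U_{3,4}$-minor, which you have just excluded. You should apply it with $k\le 3$ (or argue directly) to conclude that after deleting one more line the remainder has rank at most $1$, which is what forces $E(\si M)$ to be two lines or two lines plus a point. That structural conclusion, together with the explicit $\min\{|\ell|,|\ell'|\}$ count, is the missing bookkeeping your proposal identified as ``the main obstacle'' but did not carry out; as written it does not give a correct bound.
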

\proof Suppose $MM \in \MM_{n,3}$ and $M \not\succcurlyeq U_{3,6}$. If $M':=\si{M}$ has at most 55 elements, then $M'$ has at most $\binom{55}{2}/3=495$ long lines, so that then $\kappa(M)\leq 1+n/2+495$ by Lemma \ref{lemma:long_lines}. So we have $|E(M')|>55$.

If $M$ has  no $U_{3,6}$-minor, then $M'$ contains a line $\ell$ so that $M'\setminus \ell\not\succcurlyeq U_{3,4}$  by Lemma \ref{lemma:exclude_uniform_3} (with $k\leq 5$). 
A second application of Lemma \ref{lemma:exclude_uniform_3} (with $k\leq 3$) shows that $M'\setminus \ell$ contains a line $\ell'$ such that $M' \setminus \ell'\setminus \ell''$ has rank at most 1.  
So either $E(M')=\ell\cup\ell'$, in which case $\ell$ and $\ell'$ are the only long lines  of $M'$, or $E(M') = \ell\dot\cup\ell'\dot\cup\{e\}$  for some element $e\in E(M')$. 
In the latter case any long line except $\ell$ and $\ell'$ contains $e$, some $f \in \ell$ and some $g \in \ell'$. As each of these long lines is determined by $\{e,f\}$ and $\{e,g\}$, the number of such lines is at most $\min\{|\ell|, |\ell'|\} \le (|E(M')|-1)/2$. So then $M'$ has at most $2+n/2$ long lines. Then $\kappa(M)\leq 1+n/2 +(2+n/2)$ by Lemma \ref{lemma:long_lines}.
\endproof
Next, we consider $N = Q_6$.
\begin{lemma} \label{lemma:Q6_rank3}
	Let $M \in \MM_{n,3}$. If $M \not\succcurlyeq Q_{6}$, then $\kappa(M) \le 41 + n$.
\end{lemma}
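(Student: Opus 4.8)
The plan is to proceed exactly as in the proof of Lemma~\ref{lemma:u36_rank3}: reduce to counting long lines in the simplification and then use minor-monotonicity. By Lemma~\ref{lemma:long_lines} it suffices to show that $\si{M}$ has at most $40+n/2$ long lines, since then $\kappa(M)\le 1+n/2+(40+n/2)=41+n$. Moreover, since $Q_6$ has rank $3$ and contracting a non-loop of a rank-$3$ matroid strictly decreases the rank, a rank-$3$ matroid $M$ has a $Q_6$-minor if and only if $\si{M}$ has a restriction isomorphic to $Q_6$. So set $N\defeq\si{M}$, a simple rank-$3$ matroid with no $Q_6$-restriction and $|E(N)|\le n$, and aim to bound its number of long lines $L$ by $40+|E(N)|/2$.

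First I would dispose of the near-pencil case. If every long line of $N$ passes through one common point $p$, then any two of them meet only in $p$, so the sets $\ell\setminus\{p\}$ over long lines $\ell\ni p$ are pairwise disjoint subsets of $E(N)\setminus\{p\}$ of size at least $2$; hence $L\le (|E(N)|-1)/2\le n/2$, well within the required bound (and this step uses nothing about $Q_6$). In the remaining case there are two long lines $\ell_1,\ell_2$ with $\ell_1\cap\ell_2=\{p\}$ together with a point $q\notin\ell_1\cup\ell_2$ (the sub-case in which all long lines are pairwise disjoint is handled separately, bounding $|E(N)|$ or $L$ directly). Here the key is a \emph{$Q_6$-forcing} observation: the lines of $N$ through $q$ meet each of $\ell_1\setminus\{p\}$ and $\ell_2\setminus\{p\}$ in at most one point, so the ``forbidden'' pairs $\{(x,y)\in(\ell_1\setminus\{p\})\times(\ell_2\setminus\{p\}): q\in\cl_N(\{x,y\})\}$ form a partial matching; whenever $\max(|\ell_1|,|\ell_2|)\ge 5$ or $\bigl||\ell_1|-|\ell_2|\bigr|\ge 2$ one can choose $\{a,b\}\subseteq\ell_1\setminus\{p\}$ and $\{c,d\}\subseteq\ell_2\setminus\{p\}$ avoiding every forbidden pair, and then $N|\{p,a,b,c,d,q\}\cong Q_6$, a contradiction. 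In particular, if $N$ has a long line with at least $5$ points meeting any other long line, then $E(N)$ is the union of those two lines and $L=2$.

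It remains to run this through a structural case split: the long lines of $N$ are either concurrent through a single point (bounded by $n/2$, as above), or there are only a bounded number of them, the only configurations that survive being those in which $N$ has boundedly many elements — and then $L\le\binom{16}{2}/3=40$ by the elementary pair count $3L\le\binom{|E(N)|}{2}$. I expect the main obstacle to be precisely this finite case analysis for configurations in which all long lines have $3$ or $4$ points and are not concurrent: there the forcing observation does not apply verbatim (a $(3,3)$ or $(4,4)$ pair of concurrent lines whose forbidden pairs exhaust a perfect matching to the external point need not create a $Q_6$), so one must argue separately — essentially by a direct check that such an $N$ has at most a fixed small number of elements, whence its long lines are covered by the crude $\binom{\cdot}{2}/3$ bound giving the additive constant $40$. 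I anticipate no genuinely new idea is needed, but careful bookkeeping of these exceptional configurations is where the real work of the lemma lies, and it is the step I would budget the most time for; the constant $41$ is comfortably large enough to absorb all of them.
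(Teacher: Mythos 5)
Your plan follows the paper's strategy closely: reduce to counting long lines in $\si{M}$ via Lemma~\ref{lemma:long_lines}, fix two long lines $\ell_1,\ell_2$ meeting at $p$ and a point $q$ outside them, and use a $Q_6$-forcing argument to cap the lengths of $\ell_1,\ell_2$ at $4$. Your matching-based formulation of the forcing step is a clean way to state what the paper does more informally (choosing $p',q'$ off the lines $\ell_p,\ell_q$ through $f$), and your conditions $\max(|\ell_1|,|\ell_2|)\ge 5$ or $\bigl||\ell_1|-|\ell_2|\bigr|\ge 2$ are correct. (Your near-pencil split and the paper's ``no two long lines intersect'' split are different but both harmless; the paper actually doesn't need a separate near-pencil case, since the bound on external points ends up capping $|E(\si{M})|$ outright.)

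However, there is a genuine gap, which you flag honestly but do not close: you never establish that in the residual $(3,3)$, $(3,4)$, $(4,4)$ configurations the simplification has boundedly many elements. That is the actual content of the lemma, and it requires one more idea beyond the forcing step. The paper's argument is: if $|\ell'|=4$ and some external point $f$ lies on at most one line joining $\ell\setminus\{e\}$ to $\ell'\setminus\{e\}$, then the forcing step still applies (one can avoid the single forbidden pair on the $\ell'$ side, which has $3$ points); hence every external $f$ lies on at least two such joining lines, and since there are at most $(|\ell|-1)(|\ell'|-1)\le 9$ joining lines, the number of external points is bounded, giving $|E(\si{M})|\le 16$ and at most $\binom{16}{2}/3=40$ long lines. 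If instead $|\ell'|=|\ell|=3$, every external $f$ must lie on at least one of the at most $2\times 2=4$ joining lines (else the forcing step gives $Q_6$ directly with $\{p,q\}=\ell\setminus\{e\}$, $\{p',q'\}=\ell'\setminus\{e\}$), each of length $3$, so there are at most $4$ external points, $|E(\si{M})|\le 9$, and at most $12$ long lines. Without this ``every external point is incident to a joining line'' observation, saying ``argue that $N$ has at most a fixed small number of elements'' is a restatement of what needs to be proved, not a proof; this is where you should invest the effort you budgeted.
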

\proof Let $M':=\si{M}$. If $M'$ does not have any intersecting pair of long lines, then $M'$ has no more than  $n/3$ long lines, and then $\kappa(M)\leq 1+n/2+n/3$ by Lemma \ref{lemma:long_lines}. So suppose $M'$ has two long lines $\ell$ and $\ell'$ that intersect in point $e$. If $E(M')=\ell\cup\ell'$, then $\ell$ and $\ell'$ are the only two long lines of $M'$, and then $\kappa(M)\leq 1+n/2+2$. So consider a point 
$f\in E(M')\setminus (\ell\cup\ell')$. Each point $p\in \ell\setminus\{e\}$ determines a line $\ell_p$ through $p$ and $f$ that intersects $\ell'$ in at most one point, so that if $|\ell'|>4$, we may obtain a $Q_6$-restriction of $M'$ on $\{e,f,p,q,p',q'\}$ by arbitrarily taking $p,q\in \ell\setminus\{e\}$  and $p',q'\in\ell'\setminus(\{e\}\cup\ell_p\cup\ell_q)$. So $|\ell'|\leq 4$, and by symmetry $|\ell|\leq 4$ as well. 

If $|\ell'|=4$, and there is some $f\in E(M')\setminus (\ell\cup\ell')$ that is on at most one line $\ell''$ which intersects both $\ell\setminus\{e\}$ and $\ell'\setminus\{e\}$, then we may choose $\{e,f,p,q,p',q'\}$ spanning a $Q_6$-minor as before. So then each point $f$ is determined by two such lines, and hence there are no more than $3^2$ points in $E(M')\setminus (\ell\cup\ell')$. Then $|E(M')|\leq 16$, and $M'$ has no more than $\binom{16}{2}/3=40$ long lines, so that $\kappa(M)\leq 1+n/2+40$.

So $|\ell'|=3$, and indeed every long line of $M'$ that intersects another has length 3. Moreover, each $f\in E(M')\setminus (\ell\cup\ell')$ is on some line $\ell''$ which intersects both $\ell\setminus\{e\}$ and $\ell'\setminus\{e\}$, and each such line has length 3. There are no more than 4 such lines $\ell''$, so that there are at most 4 points $f\in E(M')\setminus (\ell\cup\ell')$. Then the number on points in $M'$ is at most 9, and the number of long lines is at most $\binom{9}{2}/3=12$. Then $\kappa(M)\leq 1+n/2+12$.\endproof

Finally, we prove the bounds for $P_6$ and $R_6$.
\begin{lemma} \label{lemma:R6_rank3}
	Let $M \in \MM_{n,3}$. If $M \not\succcurlyeq R_6$, then $\kappa(M) \le 13+n/2$.
\end{lemma}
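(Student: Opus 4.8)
By Lemma~\ref{lemma:long_lines}, it suffices to show that if $M' := \si{M}$ is a simple rank-$3$ matroid with no $R_6$-minor, then $M'$ has at most a constant number (here, at most $12$) of long lines; then $\kappa(M) \le 1 + n/2 + 12$. So the whole problem reduces to understanding what it means for a simple rank-$3$ matroid to have two ``disjoint'' long lines, since $R_6$ is precisely the rank-$3$ matroid consisting of two disjoint $3$-point lines on $6$ elements (see Figure~\ref{fig:P6R6}).

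The plan is as follows. First I would observe that $R_6$ as a \emph{restriction} is the obvious obstruction: if $\ell_1,\ell_2$ are two long lines of $M'$ with $\ell_1 \cap \ell_2 = \emptyset$ (as sets of points), then picking any $3$ points from each gives an $R_6$-restriction, hence an $R_6$-minor. Therefore in an $R_6$-minor-free $M'$, any two long lines must \emph{meet}. Second, I would analyze a single long line together with the rest of the points. Fix a long line $\ell$ with $|\ell| = p \ge 3$. Every point not on $\ell$ together with two points of $\ell$ spans the plane, so it lies on lines through $\ell$; the key point is to bound how many points can lie off $\ell$ and how long the other long lines can be, using that contracting a point $e$ of $M'$ turns lines through $e$ into parallel classes, so a long line avoiding $e$ but meeting two lines through $e$ would, after contraction, help build forbidden configurations. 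Concretely, I expect the argument to run: two long lines meet in a point $e$; if there is a third point $f$ off $\ell_1 \cup \ell_2$, then the lines $ef$, for various choices of the second point, together with $\ell_1$ and $\ell_2$ restricted away from $e$, force either long $\ell_1,\ell_2$ (giving an $R_6$-restriction among $\ell_1\setminus\{e\}$, $\ell_2\setminus\{e\}$, which are now disjoint long-ish lines) or a strong bound on $|\ell_i|$ and on the number of points off $\ell_1\cup\ell_2$ — the same style of case split as in the $Q_6$ proof (Lemma~\ref{lemma:Q6_rank3}).

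More precisely, I would first dispose of the case where $M'$ has no two intersecting long lines: then every pair of long lines is disjoint, which immediately produces an $R_6$-restriction as soon as there are two long lines, so $M'$ has at most one long line and $\kappa(M) \le 1 + n/2 + 1$. Then assume long lines $\ell, \ell'$ meet at $e$. If $E(M') = \ell \cup \ell'$, these are the only long lines and we are done with bound $2$. Otherwise take $f \notin \ell \cup \ell'$; I would argue that $|\ell|, |\ell'| \le 3$ (a longer line, say $|\ell'| \ge 4$, plus the lines through $f$ lets one select $3$ points of $\ell$ and $3$ points of $\ell'$ lying on \emph{disjoint} lines, contradiction — here I must be careful that a point $f$ and a long line give enough room, exactly as in the $Q_6$ argument), and then that the number of points off $\ell \cup \ell'$ is bounded by a constant (each such point lies on lines meeting both $\ell \setminus \{e\}$ and $\ell' \setminus \{e\}$, and there are few such lines since $|\ell|, |\ell'| \le 3$), whence $|E(M')| \le 9$ and the number of long lines is at most $\binom{9}{2}/3 = 12$. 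Finally, $P_6$ would be handled in the same breath since $P_6$ is a weakening of $R_6$ (it has only one long line), so excluding $P_6$ excludes $R_6$ and the bound is at least as good — I would merge the two into one lemma statement if convenient.

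The main obstacle I anticipate is the bookkeeping in the case $|\ell| = |\ell'| = 3$ with points off $\ell \cup \ell'$: one must verify that every external point genuinely lies on a line meeting both $\ell \setminus \{e\}$ and $\ell' \setminus \{e\}$ in a point (as opposed to passing through $e$ or missing one of them), since otherwise that point, paired with $e$ and suitable points of $\ell, \ell'$, would already span an $R_6$; and then one must count the possible such transversal lines correctly. This is routine rank-$3$ geometry but error-prone, and it is where I would spend the most care; the rest is parallel to Lemma~\ref{lemma:Q6_rank3}.
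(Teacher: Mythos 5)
Your overall framework is sound (reduce via Lemma~\ref{lemma:long_lines} to bounding long lines; observe that in an $R_6$-free simple rank-$3$ matroid any two long lines must meet), but the central step of your plan fails: you cannot show $|\ell|, |\ell'| \le 3$ for a pair of intersecting long lines. Because $R_6$ consists of two \emph{disjoint} triples, two long lines $\ell, \ell'$ meeting at a point $e$ produce an $R_6$-restriction only if one can choose three points on $\ell$ and three on $\ell'$ with \emph{no} point in common and with no extra collinearities among the six; since $|\ell\cap\ell'|=1$, this forces \emph{both} $|\ell|\ge4$ and $|\ell'|\ge4$. If only one of them is long (say $|\ell|=4$, $|\ell'|=3$, meeting at $e$, plus one extra point $f$), there is no $R_6$-restriction at all: any six points containing at least three of $\ell$ and any three of $\ell'$ either miss $e$ on $\ell'$ (leaving $\le2$ usable points) or include $e$ and thereby inherit the extra circuits $\{e,x,y\}$ with $x,y\in\ell$. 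So the extra point $f$ buys you nothing here — the $Q_6$-style construction that you are borrowing works precisely because $Q_6$ has a free sixth point off its two lines, whereas $R_6$ does not, so ``lines through $f$'' cannot be assembled into the disjoint pair that $R_6$ needs. The fix is the one the paper uses: take $\ell$ to be the \emph{longest} line; then every \emph{other} long line has exactly three points (two lines of length $\ge4$ that meet already give $R_6$), while $\ell$ itself is left unbounded, and the subsequent count must accommodate that.

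A secondary error: $P_6$ is \emph{not} a minor of $R_6$. Both are rank-$3$ matroids on $6$ elements ($P_6$ is $R_6$ with one circuit-hyperplane relaxed), so neither is a proper minor of the other, and ``excluding $P_6$ excludes $R_6$'' is false. The paper's Lemma~\ref{lemma:P6_rank3} does lean on the $R_6$ lemma, but by case-splitting on whether $\si{M}$ has an $R_6$-minor, not by a minor-containment between $P_6$ and $R_6$.
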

\proof Let $M':=\si{M}$. As $M'$ has no $R_6$-minor,  any two long lines of $M'$ must have a common point.  Let $\ell$ be a longest line of $M'$.  
Then any long line $\ell'$ of $M'$ other than $\ell$ has length 3, for if $\ell'$ has 4 or more points, then so has $\ell$, and then three elements on $\ell\setminus \ell'$ with three elements from $\ell'\setminus \ell$ will then form an $R_6$-restriction. If $M'$ has two long lines $\ell'$ and $\ell''$ distinct from $\ell$, then any remaining line other than $\ell$ must intersect both $\ell'$ and $\ell''$, and both lines have 3 points, so then $M'$ has at most $3+3^2$ long lines. But if $M'$ does not have  two long lines $\ell'$ and $\ell''$ distinct from $\ell$, then $M'$ has at most 2 long lines. By Lemma \ref{lemma:long_lines}, we have $\kappa(M)\leq 1+n/2+12$.\endproof
\begin{lemma}\label{lemma:P6_rank3}
	Let $M \in \MM_{n,3}$. If $M \not\succcurlyeq P_6$, then $\kappa(M) \le 13+19n$.
\end{lemma}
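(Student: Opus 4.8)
Since $P_6$ has rank $3$, a matroid of rank $\le 3$ has a $P_6$-minor if and only if its simplification has six points whose induced restriction is isomorphic to $P_6$ (contracting a non-loop would drop the rank below $3$, so rank-$3$ minors are restrictions, and $P_6$ is simple); equivalently, some six-element subset of $\si{M}$ spans a matroid with \emph{exactly one} long line. So the hypothesis says precisely that $N := \si{M}$ contains no such six-point configuration. By Lemma \ref{lemma:long_lines} we have $\kappa(M) \le 1 + n/2 + L$, where $L$ is the number of long lines of $N$, so it suffices to bound $L$ by a linear function of $n$; in fact the argument will show $L$ is bounded by an absolute constant, which is more than enough for the stated (deliberately loose) bound.

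\textbf{The case of a long longest line.} Let $\ell$ be a longest line of $N$ and first suppose $|\ell| \ge 6$. I claim that all points off $\ell$ lie on a single line. Suppose $e,f,g$ are three pairwise non-collinear points off $\ell$. For any three-element subset $\{a,b,c\}$ of $\ell$, the restriction to $\{a,b,c,e,f,g\}$ has $\{a,b,c\}$ as a long line, so to avoid a $P_6$-restriction it must contain a second collinear triple; such a triple cannot contain two points of $\ell$ (their line is $\ell$, and $\ell$ omits $e,f,g$), and it cannot be $\{e,f,g\}$ (not collinear), so it consists of two of $e,f,g$ together with the unique point of $\ell$ on the line they span. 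There are at most three such ``bad'' points of $\ell$, one for each pair among $e,f,g$; since $|\ell| \ge 6$, we may choose $\{a,b,c\} \subseteq \ell$ avoiding all of them, producing a $P_6$-restriction and a contradiction. Hence every triple of points off $\ell$ is collinear, so all such points lie on one line $\ell'$, and $N$ lies on $\ell \cup \ell'$. Since any line meeting $\ell$ in two points equals $\ell$ (and similarly for $\ell'$), the only long lines of $N$ are $\ell$ and $\ell'$, so $L \le 2$ here.

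\textbf{The case of a short longest line.} It remains to treat the case where every long line of $N$ has at most five points. Here the plan is to bound (i) the number of long lines through any single point and (ii) the size of a largest collection of pairwise-disjoint long lines. For (i): if a point $p$ lay on four long lines $\ell_0,\ell_1,\ell_2,\ell_3$, then choosing two further points $b_1,b_2$ on $\ell_0$ and one further point $c_i$ on each $\ell_i$ gives a six-point restriction with marked long line $\{p,b_1,b_2\}$, and avoiding $P_6$ forces a further (``transversal'') collinear triple among $b_1,b_2,c_1,c_2,c_3$; requiring such a triple for \emph{every} choice forces so many additional long lines, all confined to $\ell_0\cup\ell_1\cup\ell_2\cup\ell_3$, that the ground set is pinned to a bounded set, whence $L \le \binom{|E(N)|}{2}/3$ is bounded. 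For (ii): three pairwise-disjoint long lines, taking three points from one and two from another and one from the third, similarly yields a six-point set whose only candidate extra collinear triples are transversals, and forcing all of them away again collapses the configuration. With (i) and (ii) in hand, a maximal family of pairwise-disjoint long lines has at most two members, their union has at most ten points, every long line meets this union, and each of those ten points carries at most three long lines, so $L = O(1)$. In every case $\kappa(M) \le 1 + n/2 + L \le 13 + 19n$ by Lemma \ref{lemma:long_lines}.

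\textbf{Main obstacle.} The real difficulty is carrying out the short-longest-line case rigorously: a \emph{fixed} matroid need not be ``generic'', so showing that a configuration (four concurrent long lines, or three pairwise-disjoint ones) forces a $P_6$-restriction cannot just invoke genericity — one must show that the extra collinearities needed to block \emph{every} candidate $P_6$ cannot all be present without collapsing the ground set, which requires careful bookkeeping of the forced lines and attention to lines meeting the reference line in zero versus one point. This is where the constants degrade, and why the stated bound is far from tight.
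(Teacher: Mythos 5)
Your proof is incomplete: the ``short longest line'' case is only a sketch, and you acknowledge as much in your final paragraph. The one case you do carry out rigorously (a longest line of length at least $6$) is a correct and pleasant argument, but the case where every long line has at most $5$ points is where all the work actually lives, and neither claim (i) (at most three long lines through any point) nor claim (ii) (at most two pairwise-disjoint long lines) is established. For (i), for example, if $p$ lies on four long lines $\ell_0,\dots,\ell_3$, your six-point set $\{p,b_1,b_2,c_1,c_2,c_3\}$ fails to be $P_6$ only when some ``transversal'' triple such as $\{c_1,c_2,c_3\}$ or $\{b_a,c_i,c_j\}$ is collinear; you would need to show that blocking \emph{every} choice of the $b$'s and $c$'s in this way forces a contradiction, and that is precisely the part you leave unaddressed. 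Moreover, it is not clear that your aim --- a bound $L=O(1)$ --- is achievable at all; the paper's bound on $L$ is linear in $n$ and that suffices.

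The paper's proof takes a different and much shorter route that sidesteps your difficulty entirely. It first observes that if $\si{M}$ has no $R_6$-restriction, then Lemma~\ref{lemma:R6_rank3} already gives $\kappa(M)\le 13+n/2$. Otherwise, fix an $R_6$-restriction on two $3$-point lines $\{e_1,e_2,e_3\}$ and $\{f_1,f_2,f_3\}$ spanning long lines $\ell,\ell'$, and let $U$ be the set of pairwise intersection points of the $9$ lines $\cl\{e_i,f_j\}$, so $|U|\le\binom{9}{2}=36$. Any point $g\notin\ell\cup\ell'\cup U$ lies on at most one line $\cl\{e_i,f_j\}$, and then deleting an appropriate one of $e_i$ or $f_j$ from $\{e_1,e_2,e_3,f_1,f_2,f_3,g\}$ leaves a six-point set whose only $3$-point line is $\{e_1,e_2,e_3\}$ or $\{f_1,f_2,f_3\}$, i.e.\ a $P_6$-restriction. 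Hence every point lies in $\ell\cup\ell'\cup U$, so every long line other than $\ell,\ell'$ meets the $36$-element set $U$, giving $L\le 2+36\cdot n/2=2+18n$ and, via Lemma~\ref{lemma:long_lines}, $\kappa(M)\le 3+19n$. The key idea you are missing is the reduction to the already-proven $R_6$ case together with the bounded set $U$ of intersection points, which converts the problem of controlling long lines globally into the problem of confining them to a set of bounded size --- exactly the ``careful bookkeeping'' you identified as the obstacle but did not carry out.
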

\begin{proof}
	If $M':=\si{M}$ does not have an $R_6$-minor, then neither does $M$, and then $\kappa(M) \le 13+n/2$ by the previous lemma. If $M'$ does have an $R_6$-minor, fix one such minor with the two lines $\{e_1, e_2, e_3\}$ and $\{f_1, f_2, f_3\}$. There are 9 lines spanned by pairs $\{e_i, f_j\}$, and hence the set $U$ of  intersection points between pairs of these lines contains at most $\binom{9}{2}=36$ points of $M'$. 
	Let $\ell:=\cl_{M'}\{e_1, e_2, e_3\},\ell':=\cl_{M'}\{f_1, f_2, f_3\}$ be the two long lines of $M'$ spanned by the lines of the $R_6$-minor. If $M'$ contains any element $g$ not in $\ell\cup\ell'\cup U$, then $\{e_1, e_2, e_3, f_1, f_2, f_3, g\}$ contains a $P_6$-minor. So each long line of $M'$ other than $\ell, \ell'$  intersects $U$. It follows that $M'$ has at most $2+(36(n/2))$ long lines, and hence that in this case $\kappa(M)\leq 1+n/2 +(2+18n)\leq 3+19n$ by Lemma \ref{lemma:long_lines}.
\end{proof}

\section{\label{sec:large}A large minor-closed class}
\subsection{Stable sets in the Johnson graph}

The {\em Johnson graph} $J(n,r)$ is the graph with vertex set $\binom{[n]}{r}$, in which two vertices are connected by an edge if and only if they intersect in $r-1$ elements.  

Recall that a matroid $M \in \MM_{n,r}$ is called sparse paving if any $r$-set is either a basis or a circuit-hyperplane in $M$. Write $\SS_{n,r}$ for the collection of sparse-paving matroids in $\MM_{n,r}$. The following lemma states that there is a 1-1 correspondence between stable sets in $J(n,r)$ and $\SS_{n,r}$. It was essentially shown by Piff and Welsh \cite{PiffWelsh1971}, in proving an upper bound on the number of sparse paving matroids.

\begin{lemma}\label{lemma:stablesets} Let $0<r<n$. If $U \subseteq \binom{[n]}{r}$, then $U$ is a stable set in $J(n,r)$ if and only if $\binom{[n]}{r} \setminus U$ is the set of bases of a sparse-paving matroid.
\end{lemma}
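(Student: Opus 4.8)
The plan is to establish both implications directly from the basis-exchange characterisation of matroids and the fact (recorded in the preliminaries) that a matroid is sparse paving precisely when every non-basis is a circuit-hyperplane. Throughout, write $\mathcal{B} := \binom{[n]}{r}\setminus U$. The whole argument rests on one elementary observation: any family of $r$-sets that is pairwise at Hamming distance $1$ — equivalently, that forms a clique in $J(n,r)$ — can meet a stable set $U$ in at most one vertex.

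For the ``only if'' direction, assume $U$ is stable. Since $0<r<n$, the graph $J(n,r)$ has an edge, so $U\neq\binom{[n]}{r}$ and $\mathcal{B}\neq\emptyset$. First I would check the basis-exchange axiom: given $B_1,B_2\in\mathcal{B}$ and $x\in B_1\setminus B_2$, the set $B_1$ together with all $(B_1\setminus x)\cup y$ for $y\in B_2\setminus B_1$ forms a clique in $J(n,r)$; as $U$ meets this clique in at most one vertex and $B_1\notin U$, some $(B_1\setminus x)\cup y$ lies in $\mathcal{B}$ (when $|B_2\setminus B_1|=1$ this set is simply $B_2$). Hence $\mathcal{B}$ is the basis set of a matroid $M\in\MM_{n,r}$. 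To see $M$ is sparse paving, fix a non-basis $N\in U$; for each $a\in N$ the sets $N$ and $(N\setminus a)\cup b$ with $b\notin N$ again form a clique, so $(N\setminus a)\cup b\in\mathcal{B}$ for every such $a$ and $b$. This makes every $(r-1)$-subset $N\setminus a$ independent, so $N$ is a circuit of rank $r-1$, and it forces $b\notin\cl(N)$ for all $b\notin N$, so $N$ is a flat; thus $N$ is a circuit-hyperplane, and $M$ is sparse paving.

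For the ``if'' direction, suppose $\mathcal{B}$ is the basis set of a sparse paving matroid $M$ but, for contradiction, $U$ is not stable: pick adjacent $N_1,N_2\in U$, set $N:=N_1\cap N_2$ (so $|N|=r-1$), and let $a,b$ be the elements with $N_1=N\cup\{a\}$ and $N_2=N\cup\{b\}$. Each $N_i$ is a circuit-hyperplane, hence a flat of rank $r-1$; and since $N$ is a proper subset of the circuit $N_1$ it is independent, so $r(N)=r-1$. Because $N\subseteq N_i$ and $N_i$ is a flat, monotonicity of closure gives $\cl(N)\subseteq N_i$ for $i=1,2$, whence $\cl(N)\subseteq N_1\cap N_2=N$; thus $N$ is a flat, of rank $r-1$. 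But $N\subsetneq N_1$ are then two flats of the same rank, which is impossible — the desired contradiction.

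I expect the only non-mechanical step to be this last geometric point in the ``if'' direction: converting ``two circuit-hyperplanes at Hamming distance $1$'' into a flat properly contained in a flat of the same rank. Everything else is bookkeeping with cliques in $J(n,r)$ and elementary rank/closure manipulations.
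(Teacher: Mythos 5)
Your proof is correct. Note that the paper does not actually prove Lemma~\ref{lemma:stablesets}; it only remarks that the statement was ``essentially shown by Piff and Welsh,'' so there is no paper proof to compare against. Your argument supplies the details the paper omits, and it is the natural one: in the forward direction, the family $\{B_1\}\cup\{(B_1\setminus x)\cup y : y\in B_2\setminus B_1\}$ being a clique together with stability of $U$ gives base exchange, and for a non-basis $N\in U$ stability forces every swap $(N\setminus a)\cup b$ into $\mathcal{B}$, whence $N$ is a circuit-hyperplane; in the reverse direction, two adjacent non-bases $N_1,N_2\in U$ would produce nested flats $N_1\cap N_2\subsetneq N_1$ of equal rank $r-1$, a contradiction. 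This is the standard Piff--Welsh argument, and every step checks out.
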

In order to avoid trivialities, we will assume $0 < r < n$ in what follows. The following construction due to Graham and Sloane \cite{GrahamSloane} shows that the Johnson graph $J(n,r)$ has an $n$-coloring. Consider the function $f:V(J(n,r)) \to \Z/n\Z$, given by
\begin{equation*}
	f:X \mapsto \sum_{x \in X} x \mod n.
\end{equation*}
 If $X$ and $Y$ are adjacent vertices of $J(n,r)$, then $Y=X+e-f$ for some distinct $e,f\in [n]$, and  then $f(Y)=f(X+e-f)=f(X)+e-f\neq f(X)$. Therefore, the set of vertices 
\begin{equation*}
	U_{n,r,k} \defeq \left\{ X \in V(J(n,r)) : f(X) = k \mod n\right\}
\end{equation*}
  is a stable set of $J(n,r)$ for each $k\in \Z/n\Z$. Define
\begin{equation}
	\SS_{n,r,k} \defeq \left\{ \left([n], \binom{[n]}{r} \setminus U\right) : U \subseteq U_{n,r,k}\right\}.
\end{equation}
By Lemma \ref{lemma:stablesets}, each $\SS_{n,r,k}$ is a collection of matroids. Note that if $M \in \SS_{n,r,k}$, then any circuit-hyperplane of $M$ is in $U_{n,r,k}$.
\begin{lemma}\label{lemma:big_S}If $0<r<n$, then there is some $k\in \Z/n\Z$ such that  $|\SS_{n,r,k}|\geq 2^{\frac{1}{n}\binom{n}{r}}$.
\end{lemma}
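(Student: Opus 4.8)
The plan is to count, using a simple averaging (pigeonhole) argument over $k\in\Z/n\Z$, the total number of matroids arising as $M=([n],\binom{[n]}{r}\setminus U)$ with $U\subseteq U_{n,r,k}$ for some $k$. First I would observe that $|\SS_{n,r,k}| = 2^{|U_{n,r,k}|}$, since by Lemma~\ref{lemma:stablesets} every subset $U\subseteq U_{n,r,k}$ (being a subset of a stable set, hence stable) yields a distinct sparse-paving matroid, and distinct $U$ give distinct base sets hence distinct matroids. So it suffices to produce some $k$ with $|U_{n,r,k}|\geq \frac{1}{n}\binom{n}{r}$.

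The key step is that the sets $U_{n,r,k}$, $k\in\Z/n\Z$, partition $V(J(n,r))=\binom{[n]}{r}$ according to the value of $f(X)=\sum_{x\in X}x \bmod n$. Since there are exactly $n$ classes and they partition a set of size $\binom{n}{r}$, by averaging at least one class has size at least $\frac{1}{n}\binom{n}{r}$. Fixing such a $k$, we get
\begin{equation*}
	|\SS_{n,r,k}| = 2^{|U_{n,r,k}|} \geq 2^{\frac{1}{n}\binom{n}{r}},
\end{equation*}
which is the claim.

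There is essentially no obstacle here; the only point requiring a word of care is that the map $U\mapsto ([n],\binom{[n]}{r}\setminus U)$ is injective on subsets of $U_{n,r,k}$, so that $|\SS_{n,r,k}|$ really equals $2^{|U_{n,r,k}|}$ rather than merely being bounded by it — this is immediate since a matroid determines its set of non-bases $U$. One might also note in passing that this argument is exactly the engine behind Knuth's bound (Theorem~\ref{theorem:knuth}): taking $r=\lfloor n/2\rfloor$ and applying \eqref{eq:binom_bound2} recovers $\log m_n \geq \frac1n\binom{n}{\lfloor n/2\rfloor}$. For the purposes of Section~\ref{sec:large}, the relevant feature is that this lower bound is realized within a single family $\SS_{n,r,k}$ all of whose circuit-hyperplanes lie in $U_{n,r,k}$, a structural restriction that will later be exploited to force the absence of an $M(K_4)$-minor.
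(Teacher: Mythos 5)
Your argument is correct and is essentially identical to the paper's: partition $\binom{[n]}{r}$ into the $n$ classes $U_{n,r,k}$, apply pigeonhole to find $k$ with $|U_{n,r,k}|\geq\frac{1}{n}\binom{n}{r}$, and use $|\SS_{n,r,k}|=2^{|U_{n,r,k}|}$. The extra remark on injectivity of $U\mapsto\bigl([n],\binom{[n]}{r}\setminus U\bigr)$ is a sound, if minor, elaboration of what the paper leaves implicit.
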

\proof As the $n$ sets $U_{n,r,k}$ partition the $\binom{n}{r}$ vertices of $J(n,r)$, there is a $k$ such that $|U_{n,r,k}|\geq \frac{1}{n}\binom{n}{r}$. 
For that $k$, we have $|\SS_{n,r,k}|\geq 2^{\frac{1}{n}\binom{n}{r}}$. \endproof
Theorem \ref{theorem:knuth}, Knuths lower bound on the number of matroids, follows by taking $r=\lfloor n/2\rfloor$ in the above lemma. It was noted earlier in \cite{MayhewWelsh2013} that the construction of Graham and Sloane gives an improvement over Knuths original argument.

In what follows, we argue that many of the matroids in $\SS_{n,r,k}$ do not have an $M(K_4)$-minor, and that there exist matroids in that class without a $V_8$-minor and with large cover complexity.

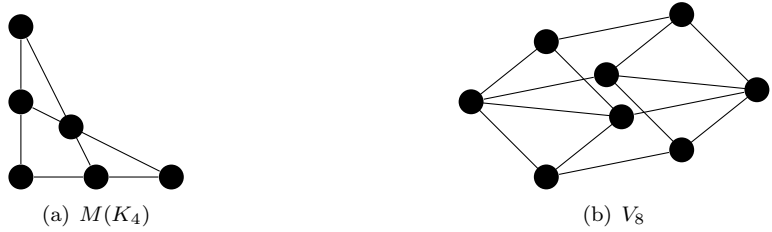
\begin{figure}[tb]
	\subfigure[$M(K_4)$]{
		\begin{tikzpicture}[scale=0.8]
			\node[circle, fill=black] (A) {};
			\node[circle, fill=black, below of=A] (B) {};
			\node[circle, fill=black, below of=B] (C) {};
			\node[circle, fill=black, right of=C] (D) {};
			\node[circle, fill=black, right of=D] (E) {};
			
			\draw (A) -- (B) -- (C);
			\draw (C) -- (D) -- (E);
			\draw[name path=BE] (B) -- (E);
			\draw[name path=AD] (A) -- (D);
			
			\path[name intersections={of=BE and AD,by=F}];
			
			\node[circle, fill=black] at (F) {};
		\end{tikzpicture}
	}\hspace{.2\textwidth}
	\subfigure[$V_8$]{
		\begin{tikzpicture}[scale=1,x={(1,-1)},y={(1,1.8)},z={(0.2,0.4)}]
			\begin{scope}[canvas is xy plane at z=0]
				\draw (0,0) node[circle,fill=black] (A1) {} -- (1,0) node[circle,fill=black] (B1) {} -- (1,1) node[circle,fill=black] (C1) {} -- (0,1) node[circle,fill=black] (D1) {} -- (0,0) -- (1,1);
			\end{scope}
			\begin{scope}[canvas is xy plane at z=-3]
				\draw (0,0) node[circle,fill=black] (A2) {} -- (1,0) node[circle,fill=black] (B2) {} -- (1,1) node[circle,fill=black] (C2) {} -- (0,1) node[circle,fill=black] (D2) {} -- (0,0) -- (1,1);
			\end{scope}
			\foreach \i in {A,B,C,D}{\draw (\i1) -- (\i2);}
		\end{tikzpicture}
	}
	\caption{\label{fig:MK4V8}Two sparse paving matroids.}
\end{figure}

\subsection{Matroids without an $M(K_4)$-minor}
Recall that $M(K_4)$ is the graphical matroid obtained from the complete graph on 4 vertices. It is a matroid of rank 3 on 6 elements. See Figure \ref{fig:MK4V8}(a).
\begin{lemma}\label{lemma:MK4}
	Let $n \in \NN$ be an odd integer and let $r, k \le n$. Then no $M \in \SS_{n,r,k}$ has $M(K_4)$ as a minor.
\end{lemma}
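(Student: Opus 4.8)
The plan is to show that if $M \in \SS_{n,r,k}$ with $n$ odd, then $M$ cannot contain $M(K_4)$ as a minor, by exploiting the structure of the circuit-hyperplanes of matroids in $\SS_{n,r,k}$: every non-basis of such a matroid is a circuit-hyperplane, and all circuit-hyperplanes lie in the single stable set $U_{n,r,k}$ of the Johnson graph, meaning no two of them differ in exactly one element. The key point is that an $M(K_4)$-minor of a sparse paving matroid would itself have to be sparse paving (since being sparse paving is preserved under minors, by Lemmas~\ref{lemma:cc1} and \ref{lemma:cc3}-style arguments — or more directly, any minor of a sparse paving matroid is sparse paving), and $M(K_4)$ has \emph{three} circuit-hyperplanes, namely its three triangles, which pairwise intersect in a single edge. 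So the obstruction I want to extract is: in $M$, one cannot have three circuit-hyperplanes sitting in the minor-pattern of the three triangles of $K_4$.

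Concretely, I would first reduce to the case $N = M\setminus D / C$ with $M(K_4) \cong N$, and observe that contracting an independent set and deleting elements sends the $r$-subsets of $E(N)$ that are non-bases of $N$ to $r$-subsets of $E(M)$ that are non-bases of $M$, of the form $B \cup C$ where $B$ ranges over the non-bases of $N$ (here using that in a sparse paving matroid, non-bases of a minor lift in a controlled way — this is exactly the mechanism behind Lemma~\ref{lemma:cc3}). The three triangles $T_1, T_2, T_3$ of $K_4$ satisfy $|T_i \triangle T_j| = 2$ for $i \ne j$ (each pair of triangles shares exactly one edge and differs in the other two). Lifting, the three sets $T_i \cup C$ are all non-bases of $M$, hence all circuit-hyperplanes, hence all in $U_{n,r,k}$, and still satisfy $|(T_i\cup C)\triangle(T_j\cup C)| = 2$. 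Now I invoke the coloring function $f(X) = \sum_{x\in X} x \bmod n$: two sets in $U_{n,r,k}$ have the same $f$-value, so for each pair $i \ne j$ we get $\sum_{x \in T_i} x \equiv \sum_{x \in T_j} x \pmod n$, i.e. the symmetric difference contributes $0 \bmod n$. Writing $T_i \setminus T_j = \{a\}$ and $T_j \setminus T_i = \{b\}$ (the two edges not shared), this forces $a \equiv b \pmod n$, hence $a = b$ since both lie in $[n]$ — but $a \ne b$, contradiction.

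Wait — I should double-check that $T_i\setminus T_j$ is a single element: the three triangles of $K_4$ on vertex set $\{1,2,3,4\}$ are $\{12,13,23\}$, $\{12,14,24\}$, $\{13,14,34\}$, and indeed any two share exactly one edge, so $T_i\setminus T_j$ and $T_j\setminus T_i$ each have size $2$, not $1$. So the symmetric difference has size $4$, not $2$. This means the naive "differ in one element" argument does not directly apply, and I must instead use that two vertices of $J(n,r)$ with the same $f$-value and symmetric difference of size $4$ still give the relation $\sum_{T_i} x \equiv \sum_{T_j} x \pmod n$; combined over all three pairs this is a system of congruences among the six edge-labels that I claim is unsatisfiable when $n$ is odd. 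The main obstacle, then, is precisely this combinatorial/number-theoretic step: showing that one cannot label the six edges of $K_4$ with elements of $[n]$ (the elements of the circuit-hyperplanes, appropriately translated after removing the common contraction set $C$) so that the three triangles all have equal edge-sum mod $n$, given $n$ odd. I expect the resolution to use that in $K_4$ each edge lies in exactly two of the three triangles, so summing the three triangle-sums counts each edge twice, giving $2(\sum_{\text{edges}} \ell_e) \equiv 3 s \pmod n$ where $s$ is the common triangle-sum; and then a parity/oddness argument on $n$ forces a contradiction with the fact that the three triangle-sums being equal is incompatible with the edges being distinct. I would carry out this last step carefully, as it is where the hypothesis "$n$ odd" must be used in an essential way.
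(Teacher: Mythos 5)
Your setup is right: reduce to $M/A\setminus B\cong M(K_4)$ with $A$ independent, lift the circuit-hyperplanes of the $M(K_4)$-minor to circuit-hyperplanes of $M$, and then use that all circuit-hyperplanes lie in the single colour class $U_{n,r,k}$ of the Graham--Sloane colouring, so their images under $f(X)=\sum_{x\in X}x \bmod n$ agree. This is exactly how the paper argues. But there are two genuine problems with the rest.

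First, a counting error that matters: $M(K_4)$ has \emph{four} circuit-hyperplanes, not three. The triangles of $K_4$ on vertex set $\{1,2,3,4\}$ are $\{12,13,23\}$, $\{12,14,24\}$, $\{13,14,34\}$, \emph{and} $\{23,24,34\}$; you listed only the first three. This is not cosmetic. If you impose only the three constraints ``$T_1$, $T_2$, $T_3$ have equal edge-sum mod $n$'' you cannot reach a contradiction, because that system is satisfiable with distinct labels in $[n]$ for odd $n$. For instance with $n=9$, labelling $12\mapsto 1$, $13\mapsto 2$, $23\mapsto 6$, $14\mapsto 3$, $24\mapsto 5$, $34\mapsto 4$ makes each of your three triangle-sums $\equiv 0\pmod 9$ while the six labels are distinct; only the \emph{fourth} triangle-sum $6+5+4\equiv 6\pmod 9$ breaks. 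So any proof that drops the fourth triangle is doomed.

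Second, the ``double-counting'' step you sketch would not close the argument even with all four triangles: summing all four triangle-sums gives $4s\equiv 2\sum_e \ell_e \pmod n$, which is a consistency relation, not a contradiction. What the paper does instead is take the \emph{alternating} combination of the four constraints. Writing the four lifted circuit-hyperplanes as $A\cup\{a,b,c\}$, $A\cup\{a,d,e\}$, $A\cup\{c,d,f\}$, $A\cup\{b,e,f\}$ (one convenient labelling of the edges), equality of all four $f$-values gives $a+b+c\equiv a+d+e\equiv c+d+f\equiv b+e+f$, and then (first)$+$(third)$-$(second)$-$(fourth) collapses to $2(c-e)\equiv 0\pmod n$. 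Since $n$ is odd, $2$ is invertible, so $c\equiv e$, contradicting that $c,e$ are distinct elements of $[n]$. That is where ``$n$ odd'' is used, and it is the one step your sketch is missing. So: right framework and right use of the colouring, but you need the fourth triangle and the alternating sum, not a symmetric double-count.
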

\begin{proof}
	Let $M \in \SS_{n,r,k}$, and suppose to the contrary that $M/A\setminus B\cong M(K_4)$. Without loss of generality $A$ is an independent set, so $M$ has circuit-hyperplanes
	\begin{equation*}
		A \cup \{a,b,c\}, \quad A\cup\{a,d,e\}, \quad A\cup\{c,d,f\}, \quad\text{and}\quad A\cup\{b,e,f\},
	\end{equation*}
	for some distinct $a, b, c, d, e, f \in [n]$ such that $\{a,b,c,d,e,f\} = [n] \setminus (A \cup B)$. As each circuit-hyperplane of $M$ is in $U_{n,r,k}$, it follows that
	\begin{equation*}
		a + b + c = a + d + e = c + d + f = b + e + f = k - \sum_{x \in A} x \mod n.
	\end{equation*}
	Adding the first and third expression and subtracting the second and fourth equation yields $2(c-e) = 0 \mod n$. As $n$ is odd, 2 has a multiplicative inverse in $\Z/n\Z$ and it follows that $c = e \mod n$. As $c, e \in [n]$, this contradicts $c \neq e$.
\end{proof}
Note that sparse paving matroids with $M(K_4)$ as a minor do exist. In particular, $M(K_4)$ itself is sparse paving.
\begin{corollary}$\log |\Ex{M(K_4)}\cap\MM_n|\geq \frac{1}{n}\binom{n}{\lfloor n/2\rfloor}(1-o(1))$ as $n\rightarrow \infty$.
\end{corollary}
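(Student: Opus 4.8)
The plan is to exhibit, for every $n$, at least $2^{\binom{n}{\lfloor n/2\rfloor}/(n+1)}$ sparse paving matroids on $[n]$ with no $M(K_4)$-minor, and then take logarithms; this suffices because $\binom{n}{\lfloor n/2\rfloor}/(n+1)=\frac1n\binom{n}{\lfloor n/2\rfloor}\cdot\frac{n}{n+1}=\frac1n\binom{n}{\lfloor n/2\rfloor}(1-o(1))$.

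For odd $n$ this is immediate from the machinery already set up: by Lemma \ref{lemma:big_S} (with $r=\lfloor n/2\rfloor$) there is a colour $k$ with $|\SS_{n,\lfloor n/2\rfloor,k}|\ge 2^{\binom{n}{\lfloor n/2\rfloor}/n}$, and by Lemma \ref{lemma:MK4} every matroid in $\SS_{n,\lfloor n/2\rfloor,k}$ lies in $\Ex{M(K_4)}$. The observation that also handles even $n$ is that the construction of Section~\ref{sec:large} uses only two facts about the modulus $n$ appearing in the Graham--Sloane colouring: that it is at least $n$ (so that $X\mapsto\sum_{x\in X}x$ is a proper colouring of $J(n,r)$, since an edge changes the colour by $e-f$ with $0<|e-f|\le n-1$), and --- in Lemma \ref{lemma:MK4} --- that it is odd (so that $2$ is invertible, turning $2(c-e)\equiv 0$ into $c\equiv e$, whence $c=e$ because $|c-e|\le n-1$ lies below the modulus).

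Accordingly, for even $n$ I would simply repeat the construction with modulus $m:=n+1$ in place of $n$: set $f(X)=\sum_{x\in X}x\bmod m$, check exactly as before that each level set $U=\{X\in\binom{[n]}{r}:f(X)=k\}$ is a stable set of $J(n,r)$, so that by Lemma \ref{lemma:stablesets} the complement of every subset of $U$ is the set of bases of a sparse paving matroid, and rerun the proof of Lemma \ref{lemma:MK4} verbatim (now modulo $m$, which is odd). Taking $r=\lfloor n/2\rfloor$, the $m$ level sets partition $\binom{[n]}{\lfloor n/2\rfloor}$, so some level set has size at least $\binom{n}{\lfloor n/2\rfloor}/m=\binom{n}{\lfloor n/2\rfloor}/(n+1)$, yielding at least $2^{\binom{n}{\lfloor n/2\rfloor}/(n+1)}$ matroids in $\Ex{M(K_4)}\cap\MM_n$. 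Uniformly, one may take $m$ to be the least odd integer with $m\ge n$ and run a single argument for all $n$.

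The one genuine subtlety is exactly this even case. One cannot apply Lemma \ref{lemma:MK4} directly (it fails for even $n$: if $c-e=n/2$ then $2(c-e)\equiv 0\bmod n$ carries no information), and the cheap fix of adding a coloop to an $M(K_4)$-free matroid on $n-1$ elements only gives $\log|\Ex{M(K_4)}\cap\MM_{n-1}|\ge\binom{n-1}{\lfloor(n-1)/2\rfloor}/(n-1)\sim\tfrac12\cdot\frac1n\binom{n}{\lfloor n/2\rfloor}$, losing a factor of $2$. Bumping the modulus to the nearest odd integer above $n$ costs only the harmless factor $\tfrac{n}{n+1}=1-o(1)$ and simultaneously restores properness of the colouring and invertibility of $2$; beyond that the argument is nothing more than assembling Lemmas \ref{lemma:stablesets}, \ref{lemma:big_S} and \ref{lemma:MK4}.
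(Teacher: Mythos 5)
Your proof is correct, and for even $n$ it is in fact more careful than the paper's own argument. The paper's proof handles even $n$ by taking matroids in $\SS_{n-1,\lfloor(n-1)/2\rfloor,k}$ (with $n-1$ odd, so that Lemma~\ref{lemma:MK4} applies) and adding a loop to land in $\MM_n$. As you observe, $\binom{n-1}{\lfloor(n-1)/2\rfloor}=\tfrac12\binom{n}{\lfloor n/2\rfloor}$ when $n$ is even, so this route only yields
$\log|\Ex{M(K_4)}\cap\MM_n|\geq \frac{1}{n-1}\binom{n-1}{\lfloor(n-1)/2\rfloor}\sim \tfrac12\cdot\frac1n\binom{n}{\lfloor n/2\rfloor}$
along the even subsequence, which falls short of the stated $(1-o(1))$ by a factor of~$2$. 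Your alternative --- keeping the ground set $[n]$ and running the Graham--Sloane colouring modulo $m:=n+1$ (or, uniformly, modulo the least odd $m\geq n$) --- repairs this: since $m\geq n$ the map $X\mapsto\sum_{x\in X}x\bmod m$ remains a proper colouring of $J(n,r)$, since $m$ is odd the key cancellation $2(c-e)\equiv 0$ still forces $c=e$ (as $|c-e|\leq n-1<m$), and the level sets partition $\binom{[n]}{\lfloor n/2\rfloor}$ into $m\leq n+1$ classes, giving a stable set of size at least $\binom{n}{\lfloor n/2\rfloor}/(n+1)=\frac1n\binom{n}{\lfloor n/2\rfloor}(1-o(1))$. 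This is the same overall strategy as the paper (Graham--Sloane colouring, modular obstruction to an $M(K_4)$-minor via Lemma~\ref{lemma:MK4}, cardinality of a largest level set), but your choice of modulus gives the claimed constant uniformly in $n$ rather than only on the odd subsequence.
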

\proof By the Lemma, we have  $\log |\Ex{M(K_4)}\cap\MM_n|\geq \log |\SS_{n,r,k}|$ for each $k$ if $n$ is odd, and $$\log |\Ex{M(K_4)}\cap\MM_n|\geq \log |\SS_{n-1,r,k}|$$ for each $k$ if $n$ is even, as each matroid in $\SS_{n-1,r,k}$ can be extended by a loop to a matroid in $\MM_n$. By Lemma \ref{lemma:big_S}, we have $\log |\SS_{n,\lfloor n/2\rfloor,k}|\geq \frac{1}{n}\binom{n}{\lfloor n/2\rfloor}$ for some $k$. The corollary follows.\endproof
This proves Theorem \ref{theorem:k4}, which is a restatement of the corollary.

\subsection{Matroids without a $V_8$-minor} Recall that the V\'{a}mos matroid $V_8$ is a matroid of rank 4 on 8 elements $E(V_8) = \{a,a',b,b',c,c',d,d'\}$, of which the set of bases consists of each 4-set, except the following five:
\begin{equation}\label{eq:V8_circuithyperplanes}
	\{a,a',b,b'\}, \quad \{a,a',c,c'\}, \quad \{a, a', d, d'\}, \quad \{b, b', c, c'\}, \quad\text{and} \quad \{b, b', d, d'\}.
\end{equation}
$V_8$ is depicted in Figure \ref{fig:MK4V8}(b).
\begin{lemma}\label{lemma:v8kappa}
	Let $n \in \NN$ be an odd integer and let $r \le n$. Then there exists a sparse paving matroid $M \in \MM_{n,r}$ such that $M\not \succeq V_8$ and $\kappa(M)\geq \binom{n}{r}/n$.
\end{lemma}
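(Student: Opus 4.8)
The plan is to follow the blueprint of Lemma \ref{lemma:MK4}, exhibiting the required matroid inside one of the families $\SS_{n,r,k}$. Since the $n$ sets $U_{n,r,k}$ ($k\in\Z/n\Z$) partition the $\binom{n}{r}$ vertices of $J(n,r)$, first choose $k$ with $|U_{n,r,k}|\ge\binom{n}{r}/n$, and let $M\in\MM_{n,r}$ be the matroid whose set of non-bases is \emph{all} of $U_{n,r,k}$; this is a well-defined sparse paving matroid by Lemma \ref{lemma:stablesets} (and it lies in $\SS_{n,r,k}$). Since $M$ is sparse paving, its circuit-hyperplanes are precisely the members of $U_{n,r,k}$, so Lemma \ref{lemma:cc4} and the remark following it give $\kappa(M)\ge|U_{n,r,k}|\ge\binom{n}{r}/n$. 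Everything then reduces to checking that this $M$ has no $V_8$-minor.

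For that I would argue exactly as in Lemma \ref{lemma:MK4}. Suppose $M/A\setminus B\cong V_8$; we may take $A$ independent, so that $[n]\setminus(A\cup B)=\{a,a',b,b',c,c',d,d'\}$, eight distinct elements of $[n]$ once $E(V_8)$ is identified with its image. By the standard minor correspondence for an independent $A$, $M$ then has the five circuit-hyperplanes $A\cup\{a,a',b,b'\}$, $A\cup\{a,a',c,c'\}$, $A\cup\{a,a',d,d'\}$, $A\cup\{b,b',c,c'\}$, $A\cup\{b,b',d,d'\}$, whereas $A\cup\{c,c',d,d'\}$ is a basis of $M$ (it corresponds to a basis of $V_8$). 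Because the non-bases of $M$ are exactly $U_{n,r,k}$, writing $s\equiv\sum_{x\in A}x$ and $\alpha\equiv a+a'$, $\beta\equiv b+b'$, $\gamma\equiv c+c'$, $\delta\equiv d+d'$ modulo $n$, the five circuit-hyperplane conditions read $s+\alpha+\beta\equiv s+\alpha+\gamma\equiv s+\alpha+\delta\equiv s+\beta+\gamma\equiv s+\beta+\delta\equiv k\pmod n$. These force $\alpha\equiv\beta\equiv\gamma\equiv\delta$ and $s+2\alpha\equiv k$, hence $s+\gamma+\delta\equiv s+2\alpha\equiv k\pmod n$; thus $A\cup\{c,c',d,d'\}\in U_{n,r,k}$ is a non-basis of $M$, contradicting that it is a basis.

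The only non-routine ingredient, and the step I expect to be the crux, is the combinatorial feature of $V_8$ that makes this work: among the six sets obtained as the union of two of the four ``legs'' $\{a,a'\},\{b,b'\},\{c,c'\},\{d,d'\}$, exactly five are circuit-hyperplanes of $V_8$ while the sixth, $\{c,c'\}\cup\{d,d'\}$, is a basis, and it is precisely this asymmetry that the modular computation converts into a contradiction. The remaining points are routine: one should first dispose of the degenerate parameter ranges ($n<8$, $r<4$, or $r>n-4$), in which $M$ trivially has no $V_8$-minor, and one uses the standard fact (already invoked implicitly in Lemma \ref{lemma:MK4}) that, when $A$ is independent in $M$ and $X\cap A=\emptyset$ with $|X|=r(M)-|A|$, the set $X$ is a basis of $M/A\setminus B$ iff $X\cup A$ is a basis of $M$. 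I would also remark that, unlike in Lemma \ref{lemma:MK4}, oddness of $n$ is not actually used here; the hypothesis is kept only for uniformity, e.g.\ to chain with the loop-extension argument for even $n$.
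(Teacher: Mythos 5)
Your proposal is correct and follows the same approach as the paper: it uses the same matroid (the one in $\SS_{n,r,k}$ whose circuit-hyperplanes are all of $U_{n,r,k}$, with $k$ chosen so $|U_{n,r,k}|\ge\binom{n}{r}/n$), the identity $\kappa(M)=|U_{n,r,k}|$ via Lemma~\ref{lemma:cc4}, and the same modular computation on the Graham--Sloane labels of the five circuit-hyperplanes forced by a hypothetical $V_8$-minor. Your refinement of the arithmetic --- deriving $a+a'\equiv b+b'\equiv c+c'\equiv d+d'$ directly by subtracting pairs of the five congruences, rather than passing through $2(a+a')\equiv k'$ and invoking invertibility of $2$ as the paper does --- is a valid shortcut, and your remark that oddness of $n$ is consequently not needed in this lemma (in contrast to Lemma~\ref{lemma:MK4}) is a correct observation.
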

\begin{proof}
	If $M \in \SS_{n,r,k}$ is the matroid whose set of circuit-hyperplanes is $U_{n,r,k}$, then $\kappa(M)=|U_{n,r,k}|\geq \binom{n}{r}/n$ for an appropriate choice of $k$. We will argue that $M$ does not have $V_8$ as a minor. Suppose to the contrary that $M/A\setminus B \cong V_8$ (with $A$ independent in $M$). Then for distinct elements $a, a', b, b', c, c', d,d' \in [n]$, $M$ has five circuit-hyperplanes of the form $A \cup X$, with $X$ any of the five sets in Equation \eqref{eq:V8_circuithyperplanes}. It follows that
	\begin{equation*}
		a + a' + b + b' = a + a' + c + c' = b + b' + c + c' = k - \sum_{x \in A} x \mod n.
	\end{equation*}
	Writing $k' = k - \sum_{x \in A} x$, we find that $2(a + a') = 2(b + b') = 2(c + c') = k' \mod n$. Similarly, it follows from
	\begin{equation*}
		a + a' + b + b' = a + a' + d + d' = b + b' + d + d' = k' \mod n
	\end{equation*}
	that $2(d + d') = k' \mod n$. As 2 has a multiplicative inverse in $\Z/n\Z$, it follows that $c + c' + d + d' = k' \mod n$, and hence $\{c, c', d, d'\}$ is a circuit-hyperplane of $M/A\setminus B \cong V_8$: a contradiction.
\end{proof}

\section{\label{sec:outtro} Final remarks}
\subsection{The Blow-up Lemma and entropy}The Blow-up Lemma bears a striking similarity to the following result, which was derived in  \cite{BansalPendavinghVanderPol2012B} from Shearers' entropy Lemma.
\begin{lemma}\label{contraction}For $0\leq t\leq r\leq n$, we have 
$\frac{1}{\binom{n}{r}} \log(m_{n,r}+1) \leq \frac{1}{\binom{n-t}{r-t}} \log(m_{n-t, r-t}+1).$
\end{lemma}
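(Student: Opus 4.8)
The plan is to mirror the structure of the Blow-Up Lemma, but at the level of Shannon entropy rather than fractional covers, using Shearer's entropy lemma: if $X=(X_i)_{i\in N}$ is a random vector indexed by a finite set $N$, and $\mathcal{G}$ is a family of subsets of $N$ covering each coordinate $i\in N$ at least $d$ times, then $H(X)\le \frac{1}{d}\sum_{G\in\mathcal{G}}H(X_G)$, where $X_G$ denotes the subvector indexed by $G$. Take $N:=\binom{[n]}{r}$, and identify each $M\in\MM_{n,r}$ with the indicator vector $x^M\in\{0,1\}^{N}$ of its set of bases $\BB(M)\subseteq\binom{[n]}{r}$. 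Let $M$ be drawn uniformly at random from $\MM_{n,r}\cup\{O\}$, where $O$ is a formal symbol whose indicator vector is the all-zeroes vector; then $H(x^M)=\log(m_{n,r}+1)$.

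For the covering family I would take $\mathcal{G}:=\{G_S:S\in\binom{[n]}{t}\}$ with $G_S:=\{X\in\binom{[n]}{r}:S\subseteq X\}$. Then $|G_S|=\binom{n-t}{r-t}$, and each $X\in\binom{[n]}{r}$ lies in exactly $\binom{r}{t}$ of the sets $G_S$, namely those indexed by the $t$-subsets of $X$, so the covering multiplicity is $d=\binom{r}{t}$. The key step is to bound $H(x^M_{G_S})$ for each fixed $S$. If $M=O$, or if $S$ is dependent in $M$, then $x^M_{G_S}$ is the all-zeroes vector. If $S$ is independent in $M$, then the bijection $X\mapsto X\setminus S$ from $G_S$ onto $\binom{[n]\setminus S}{r-t}$ carries $x^M_{G_S}$ to the indicator vector of $\BB(M/S)$, since a basis of $M$ containing $S$ corresponds precisely to a basis of the contraction $M/S$. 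As $M/S$ is a matroid on $n-t$ elements of rank $r-t$, its base set takes at most $m_{n-t,r-t}$ distinct values; together with the all-zeroes vector this gives at most $m_{n-t,r-t}+1$ possible values for $x^M_{G_S}$, hence $H(x^M_{G_S})\le\log(m_{n-t,r-t}+1)$.

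Feeding these two facts into Shearer's inequality yields
$$\log(m_{n,r}+1)=H(x^M)\le\frac{1}{\binom{r}{t}}\sum_{S\in\binom{[n]}{t}}H(x^M_{G_S})\le\frac{\binom{n}{t}}{\binom{r}{t}}\log(m_{n-t,r-t}+1),$$
and the lemma follows after dividing by $\binom{n}{r}$ and applying the identity $\binom{n}{t}/\binom{r}{t}=\binom{n}{r}/\binom{n-t}{r-t}$ already used in the proof of the Blow-Up Lemma.

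I do not expect a genuine obstacle; the whole content lies in choosing the covering family $\mathcal{G}$ and recognising the restrictions $x^M_{G_S}$ as contractions $M/S$. The only point requiring care is the bookkeeping around the "$+1$": one must check that adjoining the dummy object $O$ is harmless (Shearer's lemma imposes no structural condition on the distribution), and that the all-zeroes restriction — which arises both from $O$ and from every dependent $t$-set $S$ — is counted only once. The degenerate cases $t=0$ (both sides coincide), $t=r$ (then $M/S$ has rank $0$ and $m_{0,0}=1$), and $r=n$ are all consistent with the argument above and need no separate treatment.
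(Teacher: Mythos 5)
Your proof is correct, and it matches the route the paper attributes to this lemma: the text explicitly credits a derivation from Shearer's entropy lemma in the cited companion paper, and your covering family $\{G_S : S \in \binom{[n]}{t}\}$ with the bijection $X \mapsto X \setminus S$ onto bases of $M/S$ is exactly the contraction-based decomposition that the authors themselves highlight as the ``striking similarity'' between this lemma and the Blow-Up Lemma. The handling of the $+1$ via the dummy all-zeroes object $O$, absorbing both the dependent-$S$ case and $O$ itself into a single extra value of $x^M_{G_S}$, is the right bookkeeping.
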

As is noted there, the argument actually applies to count matroids in any contraction-closed class $\Mcal$, so that we  have
\begin{equation}\label{entropy}\frac{1}{\binom{n}{r}} \log(m'_{n,r}+1) \leq \frac{1}{\binom{n-t}{r-t}} \log(m'_{n-t, r-t}+1)\end{equation}
for the number $m'_{n,r}:=|\Mcal\cap\MM_{n,r}|$ . 

However, it does not seem to be feasible to prove that almost all matroids have a $U_{2,k}$-minor using \eqref{entropy} with $\Mcal=\Ex{U_{2,k}}$. 
The number of matroids without an $U_{2,k}$-minor on a fixed ground set with $n$ elements is at least $(k-1)^{n-k+1}/k!$, and so $\log(m'_{n,2})=\Theta(n)$. 
So at best, the upper bound on $m'_{n,r}$ we may  obtain from \eqref{entropy} is of the same order as the lower bound of Knuth.

The difference between the Blow-up Lemma and \eqref{entropy} is subtle. For a minor-closed class of matroids $\Mcal$ in which the number of distinct matroids of low rank $r$ is low and the maximal cover complexity among such matroids is high, applying \eqref{entropy} may succeed where the Blow-up Lemma might fail.

\subsection{The problem with $V_8$ and $U_{3,7}$}
Our present method for proving cases of Conjecture \ref{conj:paving} applies to any matroid $N$ so that for some $s$ and some $\epsilon>0$, we can show that
\begin{equation}\label{goodbound}\max\{\kappa(M)\mid M\in \Ex{N}\cap\MM_{n,s}\}\leq O(n^{s-1-\epsilon}).\end{equation}
For then, we may apply  Theorem \ref{theorem:count} with $\epsilon>0$, and obtain
$$\log |\Ex{N} \cap \MM_n| \le O\left(\frac{2^n}{n^{3/2+\epsilon}} \log^2(n)\right).$$
With $\epsilon>0$, this is just enough to beat Knuth's lower bound and to prove that $\Ex{N}$ is a vanishing class.

But for many $N$, we will not be able to prove \eqref{goodbound} with $\epsilon>0$. For the V{\'a}mos matroid, for example, we know that 
$$\max\{\kappa(M)\mid M\in \Ex{V_8}\cap\MM_{n,s}\}\geq o(n^{s-1})$$
for any $s$, as for each odd $n$ there will be a sparse paving matroid $M\in \Ex{V_8}\cap\MM_{n,s}$ with $\kappa(M)\geq\binom{n}{s}/n$ by Lemma \ref{lemma:v8kappa}.

The arguments that apply to the V\'amos matroid will apply to many sparse paving matroids with sufficiently many intersecting circuit-hyperplanes. But uniform matroids are safe from such attacks, and we think that they presently are the most approachable cases of Conjecture \ref{conj:paving}. The smallest uniform matroid that we have not settled is $N=U_{3,7}$. Unfortunately, we also have
$$\max\{\kappa(M)\mid M\in \Ex{U_{3,7}}\cap\MM_{n,s}\}\geq o(n^{s-1})$$
for all $s$. For note that the Dowling matroid of rank $s$ over $GF(q)$ does not have a $U_{3,7}$-minor, has $\Theta(q)$ elements for fixed $s$, and has $\kappa(M)\geq o(q^{s-1})$. For the connection between Dowling matroids and excluding uniform minors, see e.g. \cite{Geelen2008}.

In summary, Theorem \ref{theorem:count} is not sharp enough to handle the case $\epsilon=0$, which evidently does occur. But the proof of this theorem does not yet incorporate the techniques that are applied for the sharpest upper bound on the number of matroids in \cite{BansalPendavinghVanderPol2012}. The following better bound may be possible.
\begin{conjecture} \label{theorem:count}Let $\Mcal$ be a contraction-closed class of matroids. If for some natural number $s$, and positive constants $c$ and $\epsilon$ we have 
$\max\{\kappa(M)\mid M\in \Mcal \cap \MM_{n,s}\}\leq  cn^{s-1-\epsilon}$, then
$$\log |\Mcal \cap \MM_{n}|\leq  O(\frac{1}{n^{3/2+\epsilon}}2^n)  \text{ as } n\rightarrow\infty.$$
\end{conjecture}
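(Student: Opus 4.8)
The plan is to re-use, verbatim, the derivation of \eqref{eq:small_excluded_allr}: apply the Blow-Up Lemma (Lemma~\ref{lemma:blow_up}) with $t=r-s$ to the hypothesis $\max\{\kappa(M):M\in\Mcal\cap\MM_{n,s}\}\le cn^{s-1-\epsilon}$ (bounding $\kappa^*\le\kappa$ on the rank-$s$ side) and maximise over $r$, obtaining a function $f(n)=\Theta(n^{-3/2-\epsilon})$ with $\kappa^*(M)\le f(n)2^n$ for all $M\in\Mcal\cap\MM_n$ and $n$ large. The conjecture then amounts to the following sharpening of Lemma~\ref{lemma:upperbound_by_kappa*}: under the hypotheses $\kappa^*(M)\le f(n)2^n$ on $\Mcal\cap\MM_n$, $f(n)\le1$ eventually, and $1/f(n)=O(n^\alpha)$, one has $\log|\Mcal\cap\MM_n|=O(f(n)2^n)$, with neither logarithmic factor. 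I would try to prove exactly this; everything upstream of it is already in the paper.

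The two spurious $\log n$'s enter at distinct points. One is lost in Lemma~\ref{lemma:kappa_rounding}, where a fractional cover of value $\kappa^*$ is rounded to an integral flat cover of size $\Theta(\kappa^*\log n)$. The other is lost in Lemma~\ref{lemma:upperbound_by_kappa}, where an integral cover of $k$ flats is stored as $k$ arbitrary members of $2^{[n]}\times\{0,\dots,n\}$, costing $\Theta(\log n)$ bits per flat because $k$ is only a $\mathrm{poly}(n)^{-1}$ fraction of $2^n(n+1)$. For the first, my plan would be to prove that the flat-covering linear programme \eqref{eq:cover_complexity_lp} has integrality gap $O(1)$ --- this looks plausible because the flats of a matroid form a geometric lattice rather than an arbitrary set system, and in every small example I can compute the gap is $1$ --- which would let one take $k=O(\kappa^*)$. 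For the second, one cannot beat $\Theta(\log n)$ bits per flat while the flats are described as arbitrary subsets of $[n]$, so the plan is to avoid a generic cover encoding altogether and instead import the encoding scheme behind the sharpest bound $\log m_n\le\frac2n\binom{n}{\lfloor n/2\rfloor}(1+o(1))$ of \cite{BansalPendavinghVanderPol2012}, whose essential content is that a matroid on $[n]$ is pinned down by far fewer bits than a generic ``cover-like'' configuration because its set of non-bases is highly constrained. Concretely, I would run that encoding over the ranks, feeding in the cover-complexity hypothesis on $\Mcal\cap\MM_{n,s}$ at the bottom and propagating it upward through Lemma~\ref{lemma:blow_up} (or the entropy inequality \eqref{entropy}), so that the final per-matroid description has length $O(\kappa^*(M))$ with no logarithmic overhead.

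The main obstacle is exactly this last point: exhibiting, for each $M\in\Mcal\cap\MM_n$, a reconstruction of $M$ from a string of length $O(\kappa^*(M))$. One must use matroid-specific structure both to control the integrality gap and to beat the naming cost; in particular I would exploit that $\kappa(M)=\kappa(M^*)$ (Lemma~\ref{lemma:cc1}), so that for $r$ near $\lfloor n/2\rfloor$ the matroid is also pinned down by dual cover data, and that the flats produced by the Blow-Up construction are not arbitrary but closures $\cl_M(S)$ of $t$-sets $S$ together with lifts of (basic optimal) fractional covers of the contractions $M/S$. Getting all of these constraints to interact so that every logarithmic factor cancels is the content the present paper leaves open; the Blow-Up reduction and the optimisation over $r$ that precede it are routine.
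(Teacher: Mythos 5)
This statement is a \emph{conjecture} in the paper (it appears in Section~\ref{sec:outtro} inside a \texttt{conjecture} environment); the authors explicitly leave it open, remarking only that the proof of their Theorem on counting via cover complexity ``does not yet incorporate the techniques that are applied for the sharpest upper bound on the number of matroids in \cite{BansalPendavinghVanderPol2012}.'' So there is no proof in the paper to compare against, and you have correctly recognised this: what you have written is a research plan, not a proof.

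Within that frame, your diagnosis is accurate and agrees with what the authors hint at. You are right that the Blow-Up step and the optimisation over $r$ carry over verbatim, and you correctly isolate the two places where a factor of $\log n$ is lost: the randomised rounding in Lemma~\ref{lemma:kappa_rounding} (fractional to integral cover) and the per-flat naming cost in Lemma~\ref{lemma:upperbound_by_kappa} (encoding $k$ flats out of $2^n(n+1)$ possibilities costs $\Theta(\log n)$ bits each when $k$ is polynomially smaller than $2^n$). Your plan to attack the second loss by importing the encoding from \cite{BansalPendavinghVanderPol2012} is exactly what the authors propose.

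The genuine gap is that both crucial steps are asserted as goals rather than established. The claim that the LP \eqref{eq:cover_complexity_lp} has integrality gap $O(1)$ on matroids is a nontrivial open question: it does hold trivially for paving matroids (where each non-basis is covered by a unique flat, so the LP is a partition problem and is integral), but for general matroids nothing is argued, and set-cover LPs generically have $\Theta(\log)$ gaps --- the geometric-lattice structure of $\FF(M)$ may help, but ``in every small example I can compute the gap is $1$'' is not a proof, and one must at least explain which structural feature of flat covers rules out the usual lower-bound constructions. Likewise, ``run the BPvdP encoding over the ranks, feeding in the cover-complexity hypothesis at the bottom'' is a direction, not an argument: that encoding is tailored to bounding $m_n$ itself, and it is far from clear that it composes with the Blow-Up construction (which produces covers by closures of contracted $t$-sets, not by the local-cover certificates that encoding uses) so that the output length becomes $O(\kappa^*(M))$ rather than $O(\kappa^*(M)\log n)$. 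You acknowledge both of these are ``the content the present paper leaves open,'' which is honest; but it also means the proposal does not close the conjecture, it only restates the obstructions the authors already identified.
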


\ignore{
\begin{figure}[tb]
	\begin{tikzpicture}
		\path (0,0) coordinate (A) -- (-60:4) coordinate (B) -- (-120:4) coordinate (C) -- cycle;
		
		\foreach\i/\p/\t in {A/right/1 \\ 0 \\ 0,B/right/0 \\ 1 \\ 0,C/left/0 \\ 0 \\ 1}{
			\node[circle, fill=black] () at (\i) {};
			\node[\p] at (\i) {$\begin{bmatrix} \t \end{bmatrix}$};
		}
		\foreach\i/\j in {A/B, C/B, C/A} { 
			\node[circle, fill=black] () at ($(\i)!0.25!(\j)$) {};
			\node[circle, fill=black] () at ($(\i)!0.75!(\j)$) {};
			\draw (\i) -- (\j);
			\draw($($(\i)!0.5!(\j)$)!0.15!90:(\j)$) node () {$\ldots$};
		}
		\draw[decorate,decoration={brace}, ultra thick] ($(B)+(0,-0.3)$) -- ($(C)+(0,-0.3)$) node[midway,below](bracket text){$q+1$ points};
	\end{tikzpicture}
	\caption{\label{fig:Mq}The matroid $M_q$.}
\end{figure}
}

 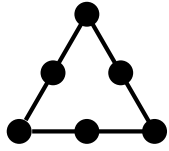
\begin{figure}[tb]
	
		\begin{tikzpicture}[scale=0.9]
			\node[dot] (A) {};
			\draw[line] (A) -- (0:1) coordinate (B) -- (0:2) coordinate (C);
			\draw[line] (A) -- (60:1) coordinate (D) -- (60:2) coordinate (E);
			\draw[line] (C) -- ($(C)!0.5!(E)$) coordinate (F) -- (E);
			\foreach\id in {B,C,D,E,F} { \node[dot, at=(\id)] {}; }
		\end{tikzpicture}
	\caption{\label{fig:Q6W3}$W^3$, the remaining sparse paving matroid on six elements.}
\end{figure}
 \subsection{A challenge} Theorem \ref{thm:asymptotic_small} covers all sparse paving matroids of rank 3 on 6 elements except $W^3$, the whirl (see Figure \ref{fig:Q6W3}).
We were unable to prove sufficient bounds on $\max\{\kappa(M)\mid M\in\Ex{N}\cap \MM_{n,3}\}$ as a function of $n$. 
We leave this as a challenge to the reader. 
 
\section{Acknowledgement}
We have recently worked with Nikhil Bansal on \cite{BansalPendavinghVanderPol2012B} and \cite{BansalPendavinghVanderPol2012}, and we have had the benefit of many stimulating discussions with him on that closely related topic. We like to point out that the argument in the proof of Lemma \ref{lemma:kappa_rounding} was part of an early version of our joint paper \cite{BansalPendavinghVanderPol2012}.
\bibliographystyle{plain}
\bibliography{complexity}

\end{document}